\documentclass{amsart}%
\usepackage{amsfonts, amsmath, amssymb}%
\usepackage{graphicx, epic, enumerate}
\usepackage{float}

\newtheorem{theorem}{Theorem}
\theoremstyle{definition}

\newtheorem{definition}{Definition}

\newtheorem{lemma}{Lemma}

\numberwithin{equation}{section}

%\newskip\stdskip                      % standard vertical space
%\stdskip=6.6pt plus3pt minus3pt 

%\setlength{\textheight}{7.5in}          
\setlength{\textwidth}{5.2in}

\begin{document}
\allowdisplaybreaks
\title{Virtual singular braids and links}

\author{Carmen Caprau}
\address{Department of Mathematics, California State University, Fresno, CA 93740, USA}
\email{ccaprau@csufresno.edu}
\urladdr{}
\author{Andrew de la Pena}
\address{Department of Mathematics, North Dakota State University, Fargo, ND 58108, USA}
\email{andrew.delapena@ndsu.edu}
\author{Sarah McGahan}
\address{Department of Mathematics, California State University, Fresno, CA 93740, USA}
\email{srmcg@mail.fresnostate.edu}

\date{}
\subjclass[2010]{57M25, 57M27; 20F36}
\keywords{braids, knots, $L$-moves, Markov-type moves, singular knots, virtual knots}

\begin{abstract} 
Virtual singular braids are generalizations of singular braids and virtual braids. We define the virtual singular braid monoid via generators and relations, and prove Alexander- and Markov-type theorems for virtual singular links. We also show that the virtual singular braid monoid has another presentation with fewer generators.
\end{abstract}

\maketitle
\section{Introduction}\label{sec:intro}

 J.W. Alexander~\cite{A} showed that any oriented classical link can be represented as the closure of a braid. Moreover, it is well-known that two braids have isotopic closures if and only if they are related by braid isotopy and a finite sequence of the so-called Markov's moves (see~\cite{M, W}). The first complete proof of this result was given by J. Birman~\cite{B1}. Other proofs have been provided by D. Bennequin~\cite{Be}, H. Morton~\cite{Mo}, P. Traczyk~\cite{T}, and S. Lambropoulou~\cite{L}. 

Analogous theorems for the virtual braid group have been proven by L.H. Kauffman and S. Lambropoulou~\cite{KL2} using the, so-called, $L$-equivalence and by S. Kamada~\cite{Ka} using Gauss data. Moreover, J. Birman~\cite{B2} proved an Alexander-type theorem for the singular braid monoid and singular links and B. Gemein~\cite{G} provided a Markov-type theorem for singular braids. Further, S. Lambropoulou~\cite{L2} derived the $L$-move analogue for singular braids via $L$-move methods, recovering the result of Gemein.

In this paper we consider oriented virtual singular links and prove Alexander-and Markov-type theorems for this class of links. These theorems are crucial in understanding the structure of virtual singular knots and links. We first define the virtual singular braid monoid using generators and relations. This definition reveals that the virtual singular braid monoid on $n$ strands is an extension of the singular braid monoid on $n$ strands by the symmetric group on $n$ letters. Various braiding algorithms can be used to prove that the Alexander theorem extends to the class of virtual singular braids. For our purpose, we borrow the braiding algorithm described in~\cite{KL2} and extend it to include singular crossings. We then show that the $L$-moves used in~\cite{KL2} for the class of virtual braids and links can be extended to the class of virtual singular braids and links. In the presence of singular crossings and additional relations describing the virtual singular braid monoid, we need to introduce a new type of $L$-moves, namely a new type of  `threaded $L_v$-moves' involving classical, singular, and virtual crossings. We state and prove first an $L$-move Markov-type theorem for virtual singular braids and then use it to provide an algebraic Markov-type theorem for virtual singular braids. 

During our study of this problem, we found that we were able to modify the arguments of~\cite{KL2}
and take the same diagrammatic geometry so as to prove our main results. Consequently, several figures in this paper are similar or exactly the same as certain figures in~\cite{KL2}. For example, if the reader would examine in this paper Figures~\ref{fig:b-c} through~\ref{fig:threaded-move} and compare with Figures 7, 9, 11, 12, and 13 in~\cite{KL2}, they would see the precise analogy of our arguments and the arguments of~\cite{KL2}.

Motivated by L.H. Kauffman and S. Lambropoulou's work in~\cite[Section 3] {KL1}, we also prove that the virtual singular braid monoid on $n$ strands admits a reduced presentation using fewer generators, namely three braiding elements together with the generators of the symmetric group on $n$ letters.

%%%%%%%%%%%%%%%%%%%%%%%%%%%%%%%%%%%%%%%%%%%%%%%%%%%
\section{Virtual singular links}\label{sec:vsl}

A virtual singular link diagram is a decorated immersion of (finitely many) disjoint copies of $S^1$ into $\mathbb{R}^2$, with finitely many transverse double points each of which has information of over/under, singular, and virtual crossings as in Figure~\ref{fig:crossings}. The over/under markings are the classical crossings, which we will refer to as \textit{real crossings}. \textit{Virtual crossings} are represented by placing a small circle around the point where the two arcs meet transversely. A filled in circle is used to represent a \textit{singular crossing}. We assume that virtual singular link diagrams are the same if they are isotopic in $\mathbb{R}^2$. 

 \begin{figure}[ht]
\[ \includegraphics[height=0.4in]{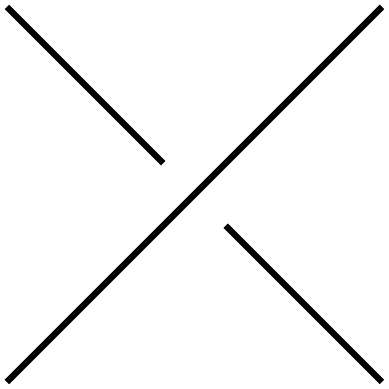}  \hspace{.5in} \includegraphics[height=0.4in, angle=90]{real} 
 \hspace{.5in} \includegraphics[height=0.4in]{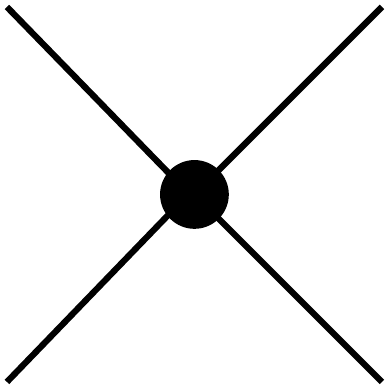}\hspace{.5in} \includegraphics[height=0.4in]{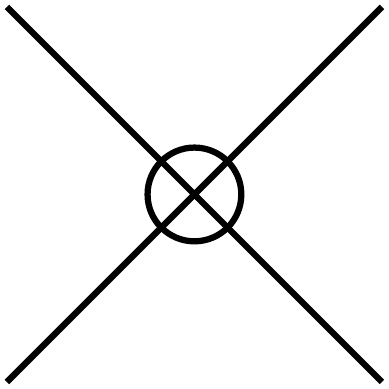}\]
\caption{Types of crossings in a virtual singular link diagram} \label{fig:crossings}
\end{figure}

Note that the set of classical link diagrams, or singular link diagrams, or virtual link diagrams comprise subsets of the set of virtual singular link diagrams. 

\begin{definition}
Two virtual singular link diagrams are said to be \textit{equivalent} if they are related by a finite sequence of the \textit{extended virtual Reidemeister moves} depicted in Figure~\ref{fig:isotopies} (where only one possible choice of crossings is indicated in the diagrams). A \textit{virtual singular link} (or a \textit{virtual singular link type}) is the equivalence class of a virtual singular link diagram. 
\end{definition}

\begin{figure}[ht]

\[\raisebox{-13pt}{\includegraphics[height=0.4in]{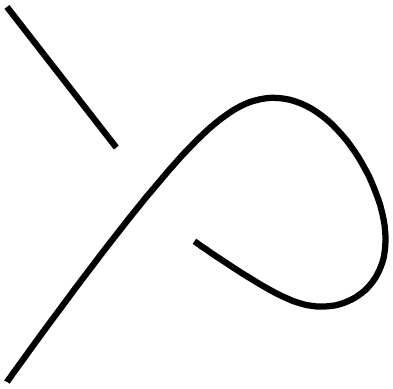}}\,\,  \stackrel{R1}{\longleftrightarrow} \,\, \raisebox{-13pt}{\includegraphics[height=0.4in]{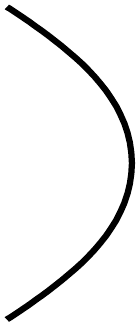}}  \hspace{1.5cm}
 \raisebox{-13pt}{\includegraphics[height=0.4in]{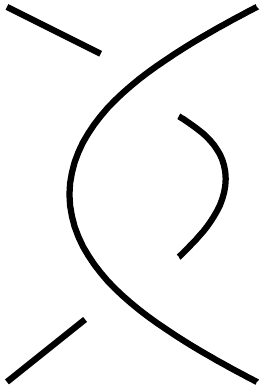}} \,\, \stackrel{R2}{\longleftrightarrow} \,\, \raisebox{-13pt}{\includegraphics[height=0.4in]{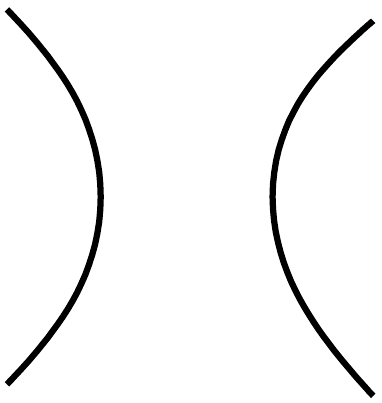}} \hspace{1.5cm}
\raisebox{-13pt}{\includegraphics[height=0.4in]{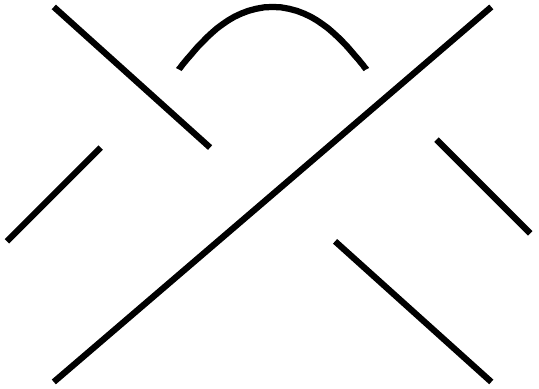}} \,\, \stackrel{R3}{\longleftrightarrow} \,\,  \raisebox{15pt}{\includegraphics[height=0.4in, angle = 180]{reid3}}\]

\[\raisebox{-13pt}{\includegraphics[height=0.4in]{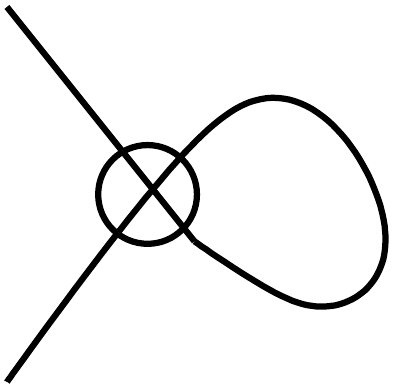}}\,\,  \stackrel{V1}{\longleftrightarrow} \,\, \raisebox{-13pt}{\includegraphics[height=0.4in]{arc}} \hspace{1.5cm}
 \raisebox{-13pt}{\includegraphics[height=0.4in]{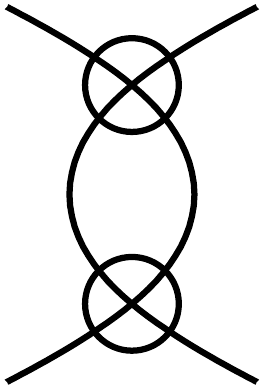}} \,\, \stackrel{V2}{\longleftrightarrow} \,\, \raisebox{-13pt}{\includegraphics[height=0.4in]{A-smoothing}} \hspace{1.5cm}
\raisebox{-13pt}{\includegraphics[height=0.4in]{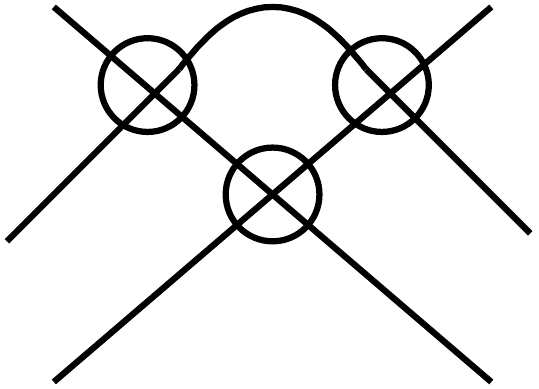}} \,\, \stackrel{V3}{\longleftrightarrow} \,\,   \raisebox{15pt}{\includegraphics[height=0.4in, angle=180]{reid3-virt}}\]

\[ \raisebox{-15pt}{\includegraphics[height=0.4in]{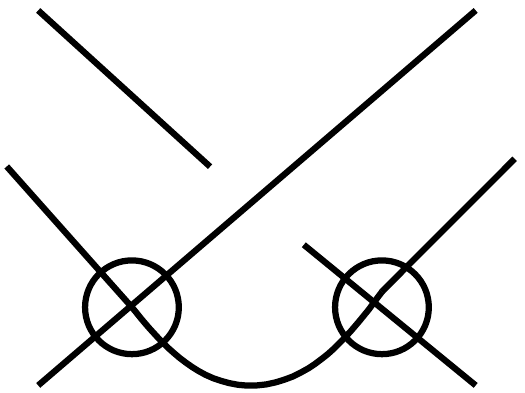}} \hspace{0.2cm}\stackrel{VR3}{\longleftrightarrow} \hspace{0.2cm} \raisebox{15pt}{\includegraphics[height=0.4in, angle=180]{VR3}} \hspace{1cm}
\raisebox{-15pt}{\includegraphics[height=0.4in]{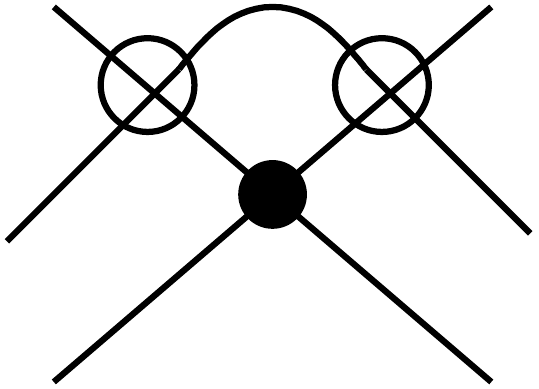}} \hspace{0.2cm}\stackrel{VS3}{\longleftrightarrow} \hspace{0.2cm} \raisebox{15pt}{\includegraphics[height=0.4in, angle=180]{VS3}}\]

\[\raisebox{-13pt}{\includegraphics[height=0.4in]{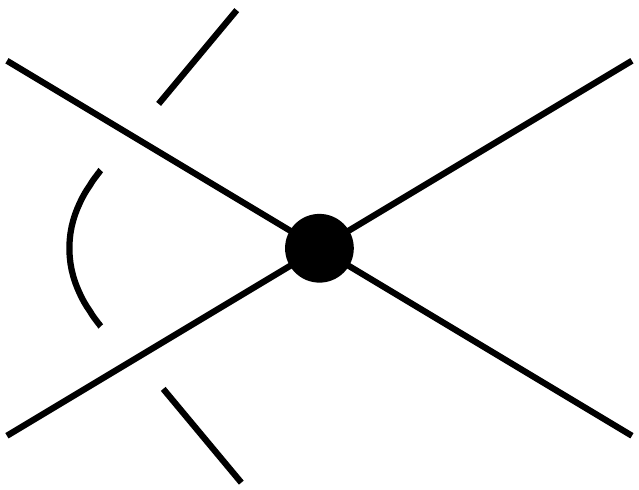}} \hspace{0.2cm} \stackrel{RS3}{\longleftrightarrow} \hspace{0.2cm} \reflectbox{\raisebox{-13pt}{\includegraphics[height=0.4in]{RS3}}}
\hspace{1cm}
\raisebox{-11pt}{\includegraphics[height=0.35in]{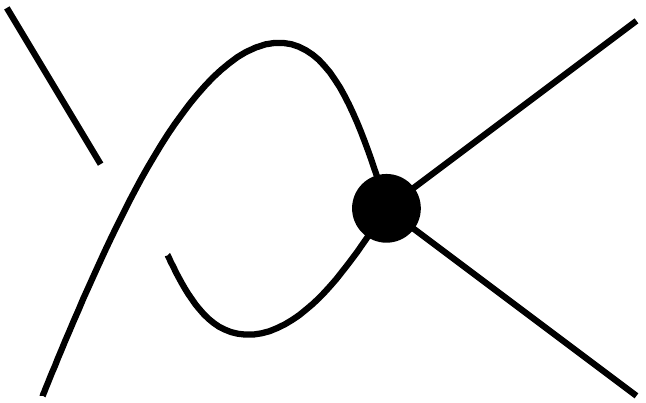}} \hspace{0.2cm} \stackrel{RS1}{\longleftrightarrow}\hspace{0.2cm} \raisebox{15pt}{\includegraphics[height=0.35in,angle=180]{RS1}}\]
\caption{The extended virtual Reidemeister moves}\label{fig:isotopies}
\end{figure}

Note that the moves involving virtual crossings can be considered as special cases of the \textit{detour move} depicted in Figure~\ref{fig:detour-move} (\cite{K1, KL1, KL2}). This move is the representation of the principle that the virtual crossings are not really there but that are rather byproducts of the projection. To understand the detour move, suppose an arc is free of real (classical) and singular crossings, and which may contain a consecutive sequence of virtual crossings. Then that arc can be arbitrarily moved, keeping its endpoints fixed, to any new location and placed transversally to the rest of the diagram, adding virtual crossings whenever these intersections occur. (In Figure~\ref{fig:detour-move}, the grey box represents an arbitrary virtual singular tangle diagram; a braid representation of the detour move is given in Figure~\ref{fig:braid detour-moves}.)

\begin{figure}[ht]
\[ \raisebox{-25pt}{\includegraphics[height=0.8in]{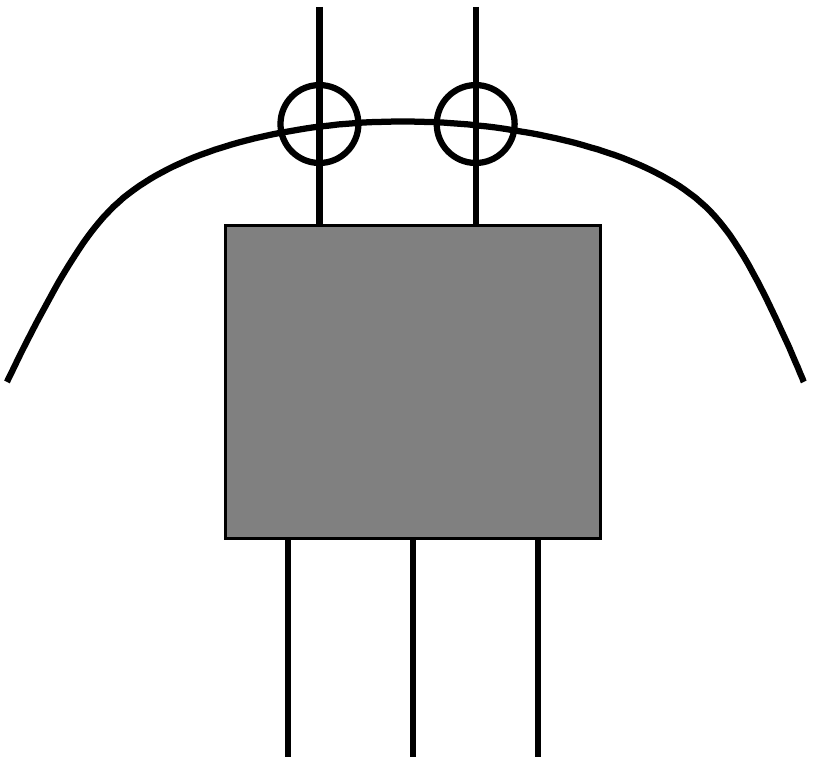}} \hspace{0.3cm}\longleftrightarrow \hspace{0.3cm} \raisebox{-25pt}{\includegraphics[height=0.8in]{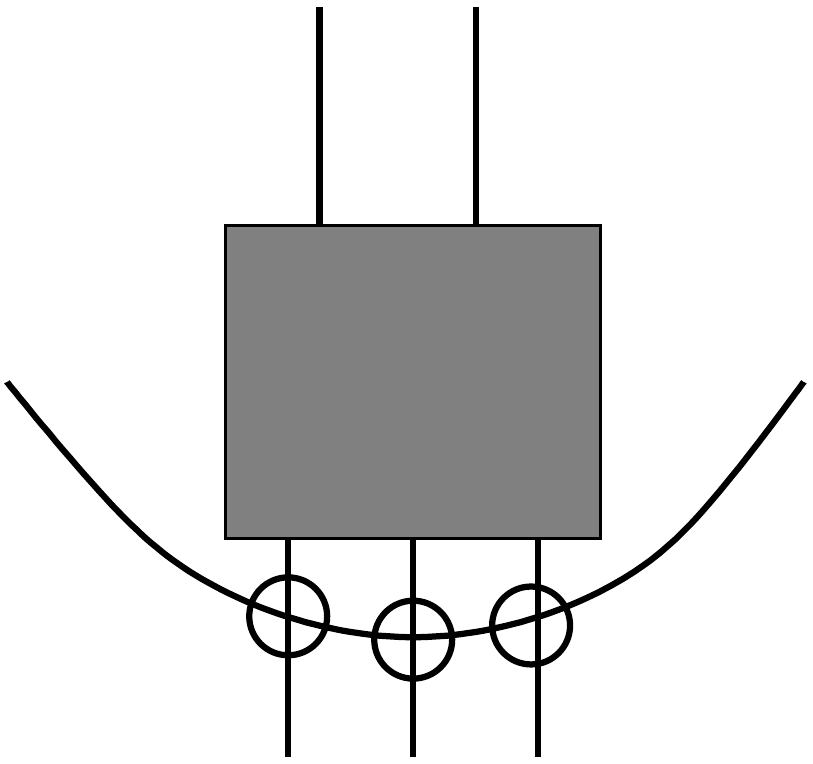}}
\]
\caption{The detour move}\label{fig:detour-move}
\end{figure} 

Conversely, the detour move can be obtained by a finite sequence of the moves shown in Figure~\ref{fig:isotopies} that involve virtual crossings. Consequently, the virtual singular equivalence is generated by the Reidemeister-type moves for singular link diagrams (that is, the classical Reidemeister moves together with the moves $RS1$ and $RS3$) and the detour move.

When working with equivalent virtual singular link diagrams, it is important to avoid the moves depicted in Figure ~\ref{fig:forbidden moves}. Although these moves are similar to some of the extended virtual Reidemeister moves, the diagrams of the two sides of a forbidden move do not represent equivalent virtual singular links. For this reason, we refer to these as the \textit{forbidden moves} for virtual singular link diagrams.

\begin{figure}[ht]
\[ \raisebox{-13pt}{\includegraphics[height=.4in]{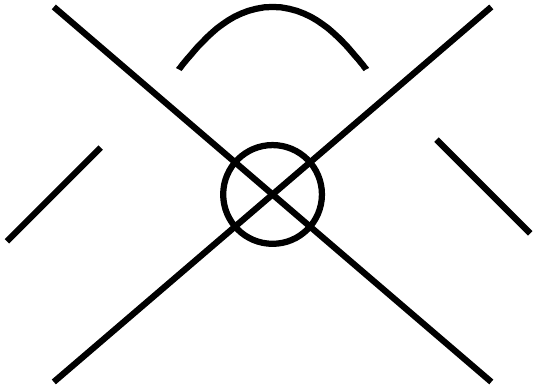}}\,\,\nleftrightarrow \,\,\raisebox{15pt}{\includegraphics[height=0.4in, angle = 180]{fmove1}}\hspace{1.5cm} 
\raisebox{-13pt}{\includegraphics[height=.4in]{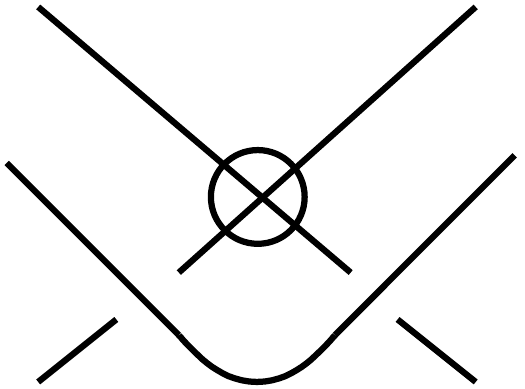}} \,\,\nleftrightarrow \,\,\raisebox{15pt}{\includegraphics[height=0.4in, angle=180]{fmove2}} \hspace{1cm}
\]

\[  \raisebox{-13pt}{\includegraphics[height=.4in]{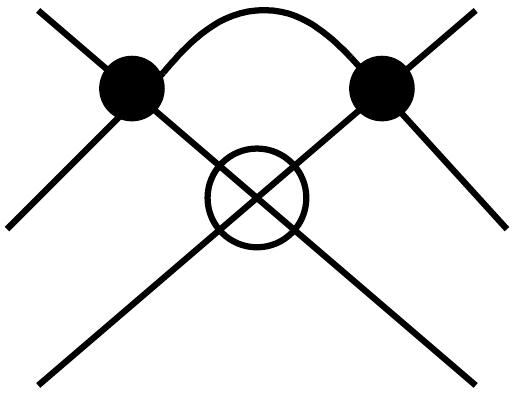}}\,\,\nleftrightarrow \,\, \raisebox{15pt}{\includegraphics[height=0.4in, angle = 180]{fmove3}} \hspace{1cm}
\reflectbox{\raisebox{-13pt}{\includegraphics[height=0.4in]{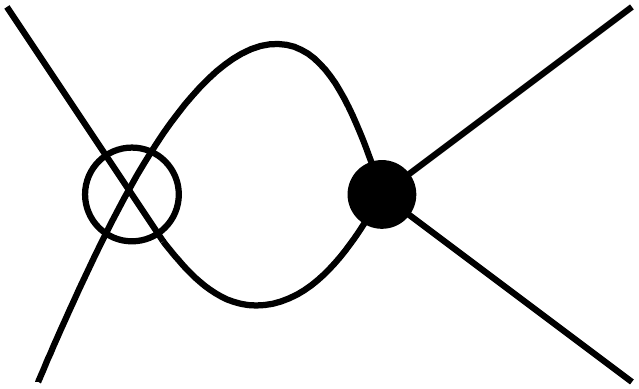}}}\,\, \nleftrightarrow \,\,\raisebox{-13pt}{\includegraphics[height=.35in]{sing}}\,\,\nleftrightarrow \,\,\raisebox{-13pt}{\includegraphics[height=0.4in]{forbid}}  \]

\caption{The forbidden moves for virtual singular link diagrams}\label{fig:forbidden moves}
\end{figure}

Recall that a \textit{singular link} is an immersion of a disjoint union of circles in three-dimensional space, which has finitely many singularities (namely  singular crossings) that are all transverse double points. Equivalently, a singular link is an embedding in three-dimensional space of a 4-valent graph with rigid vertices (where these vertices are the singular crossings). These type of embedding are also called rigid vertex knotted graphs.

Similar to the case of virtual knot theory, there is a useful topological interpretation for virtual singular knot theory in terms of embeddings of singular links (or equivalently, of rigid vertex knotted graphs) in thickened surfaces. For this, interpret each virtual crossing as a detour of one of the arcs in the crossings through a 1-handle that has been attached to the 2-sphere of the original diagram. We obtain an embedding of a collection of immersed circles into a thickened surface $S_g \times I$, where $I$ is the unit interval, $S_g$ is a compact oriented surface of genus $g$, and $g$ is the number of virtual crossings in the original diagram. Then singular knot theory in $S_g \times I$ is represented by diagrams drawn on $S_g$ taken up to the Reidemeister-type moves for singular link diagrams transferred to diagrams on $S_g$. Recall that the Reidemeister-type moves for singular link diagrams contain the classical Reidemeister moves $R1, R2$ and $R3$ together with the moves $RS1$ and $RS3$ shown in Figure~\ref{fig:isotopies}.

%%%%%%%%%%%%%%%%%%%%%%%%%%%%%%%%%%%%%%%%%%%%%%%%%%%
\section{Alexander- and Markov-type theorems}\label{sec:theorems}

A \textit{virtual singular braid} on $n$ strands is a braid in the classical sense, which may contain real, singular, and virtual crossings as `interactions' among the $n$ strands of the braid. By connecting the top endpoints with the corresponding bottom endpoints of a virtual singular braid using parallel arcs without introducing new crossings we obtain a virtual singular link diagram, called the \textit{closure} of the braid. 

Similar to the case of classical braids, virtual singular braids are composed using vertical concatenation. For two $n$-stranded virtual singular braids $\beta$ and $\beta'$, the braid $\beta \beta'$ is obtained by placing $\beta$ on top of $\beta'$ and connecting their endpoints. The set of isotopy classes of virtual singular braids on $n$ strands forms a monoid, which we denote by ${VSB}_n$. The monoid operation is the composition of braids, and the identity element, denoted by $1_n$, is the braid with $n$ vertical strands.
  
  \subsection{The virtual singular braid monoid}\label{sec:monoid}
The \textit{virtual singular braid monoid} on $n$ strands, ${VSB}_n$, is the monoid generated by the virtual singular braids $\sigma_i,\sigma_i^{-1},v_i$ and $\tau_i$, for $1 \leq i \leq n-1$, depicted below:

\[  \sigma_i \,\,\, =\,\,\,  \raisebox{-17pt}{\includegraphics[height=.5in]{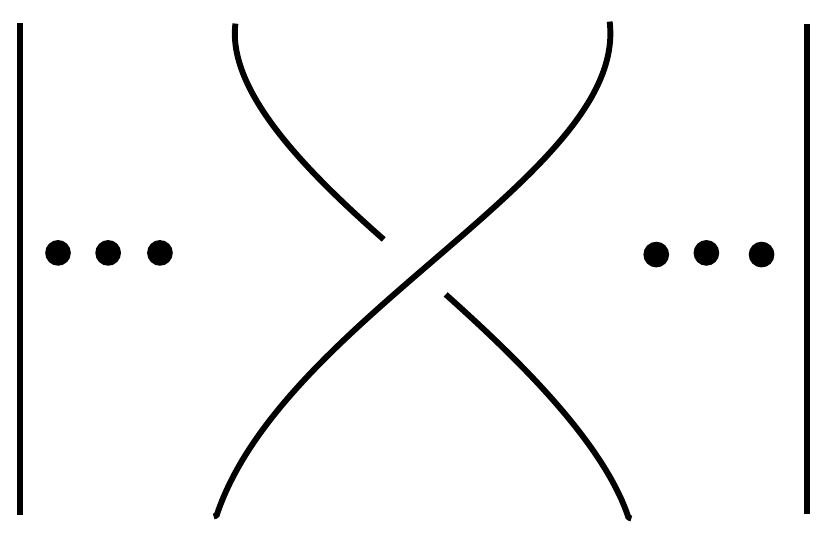}} \hspace{1cm} \sigma_i^{-1} \,\,\,=\,\,\, \raisebox{-17pt}{\includegraphics[height=.5in]{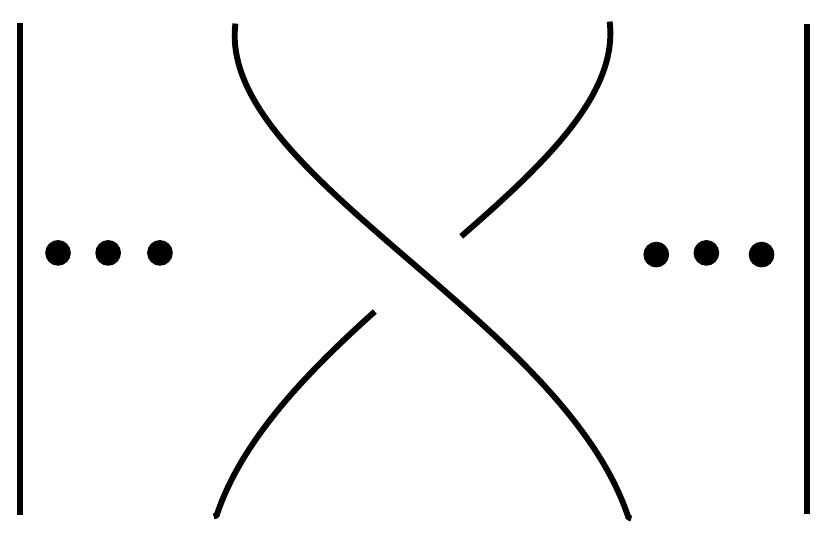}}
\put(-180, 21){\fontsize{7}{7}$1$}
\put(-165, 21){\fontsize{7}{7}$i$}
\put(-150, 21){\fontsize{7}{7}$i+1$}
\put(-127,21){\fontsize{7}{7}$n$}
\put(-58, 21){\fontsize{7}{7}$1$}
\put(-40, 21){\fontsize{7}{7}$i$}
\put(-25, 21){\fontsize{7}{7}$i+1$}
\put(-3,21){\fontsize{7}{7}$n$}
\]
\vspace{0.2cm}
\[ \tau_i \,\,\,= \,\,\, \stackrel \,\, \raisebox{-17pt}{\includegraphics[height=.5in]{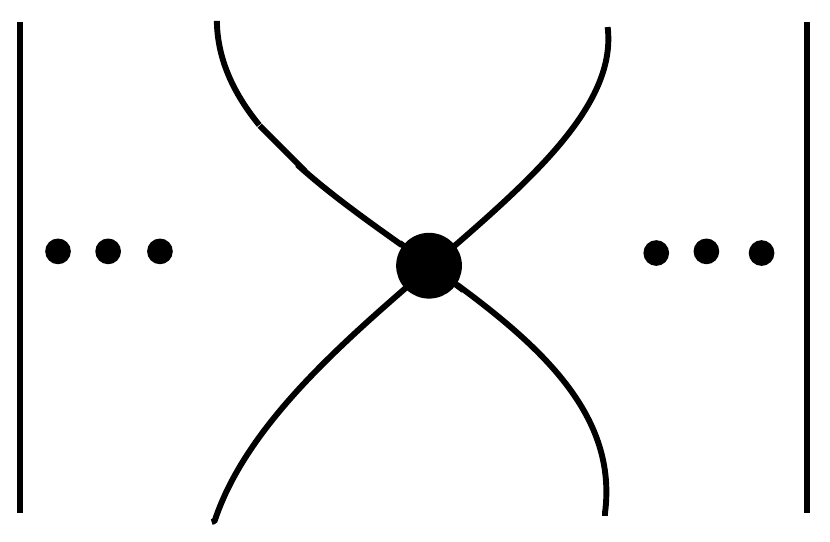}} \hspace{1cm} v_i \,\,\, = \,\,\, \raisebox{-17pt}{\includegraphics[height=.5in]{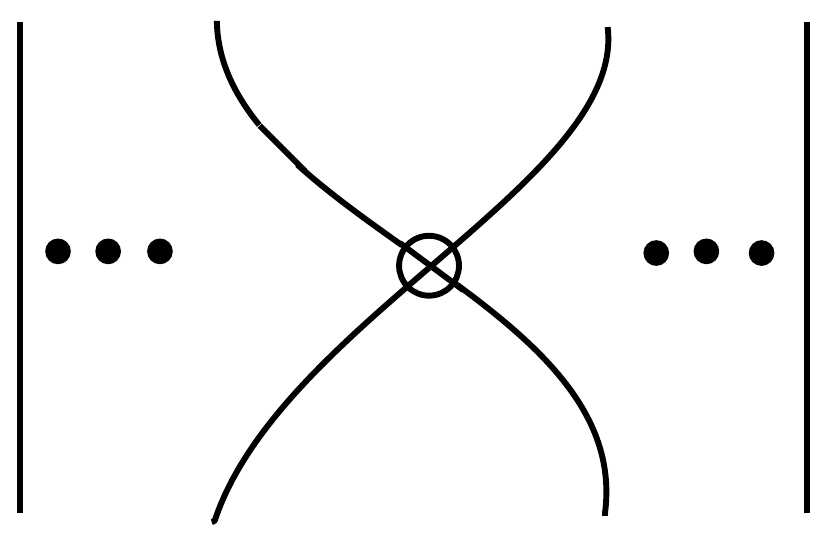}}
\put(-171, 21){\fontsize{7}{7}$1$}
\put(-158, 21){\fontsize{7}{7}$i$}
\put(-140, 21){\fontsize{7}{7}$i+1$}
\put(-119,21){\fontsize{7}{7}$n$}
\put(-56, 21){\fontsize{7}{7}$1$}
\put(-42, 21){\fontsize{7}{7}$i$}
\put(-25, 21){\fontsize{7}{7}$i+1$}
\put(-3,21){\fontsize{7}{7}$n$}
\]
and subject to the following relations:

\begin{itemize}

\item $\sigma_i\sigma_i^{-1}=\sigma_i^{-1}\sigma_i=1_n$
\begin{eqnarray*}
\raisebox{-.5cm}{\includegraphics[height=0.5in]{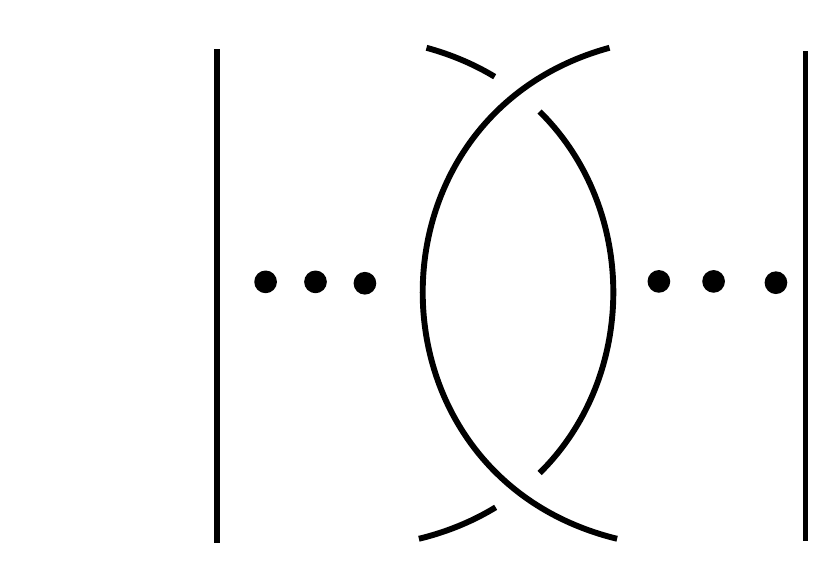}} \hspace{0.5cm} \stackrel{R2}{=} \hspace{0.2cm} &\raisebox{-.5cm}{\includegraphics[height=0.5in]{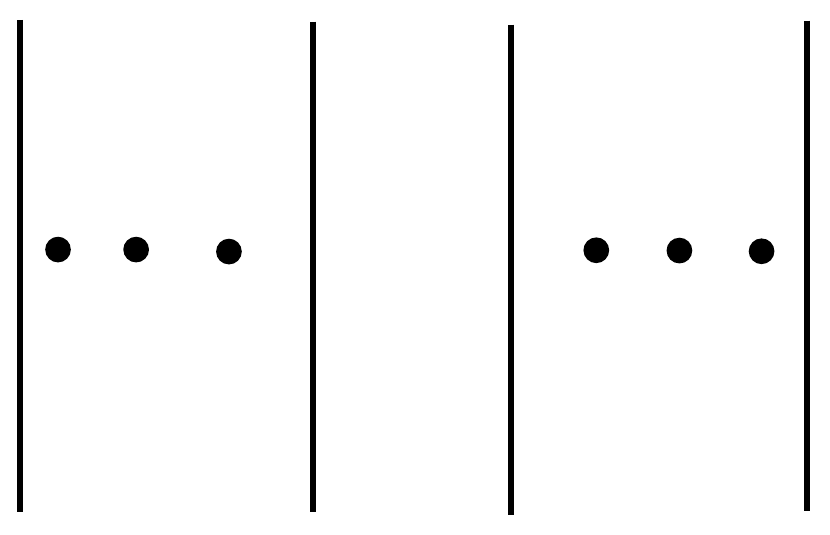}}
\end{eqnarray*}

\item $v_i^2=1_n$
\begin{eqnarray*}
\raisebox{-.5cm}{\includegraphics[height=0.5in]{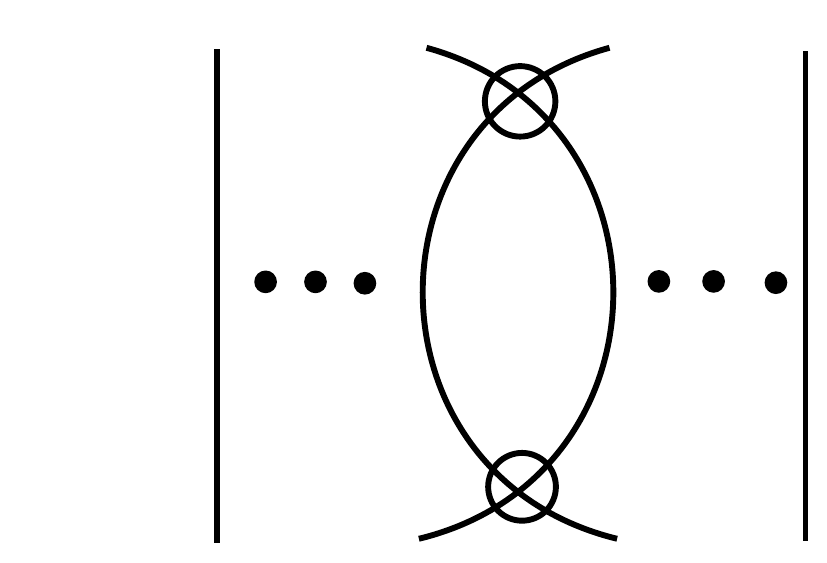}} \hspace{0.5cm} \stackrel{V2}{=} \hspace{0.2cm} &\raisebox{-.5cm}{\includegraphics[height=0.5in]{idb}}
\end{eqnarray*}

\item $\sigma_i\sigma_j\sigma_i=\sigma_j\sigma_i\sigma_j$, for $|i-j|=1$
\begin{eqnarray*}
\raisebox{-.5cm}{\includegraphics[height=0.5in]{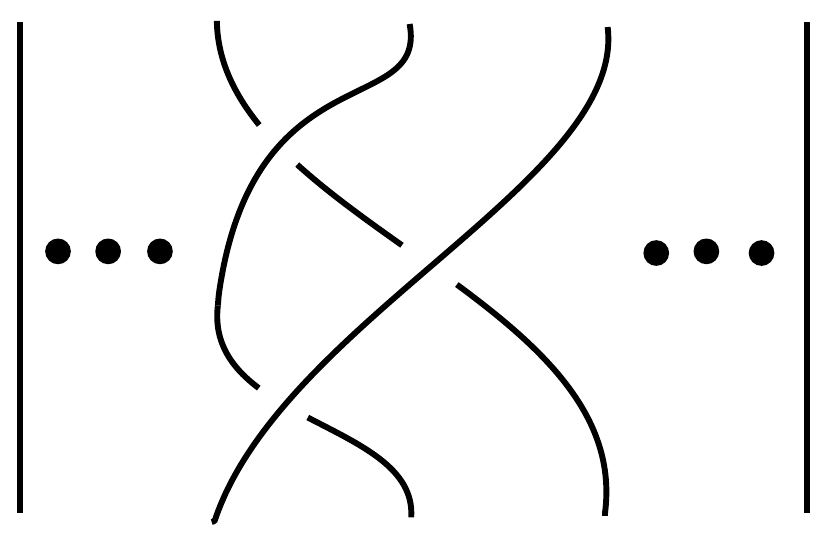}} \hspace{0.5cm} \stackrel{R3}{=} \hspace{0.2cm} &\raisebox{-.5cm}{\includegraphics[height=0.5in]{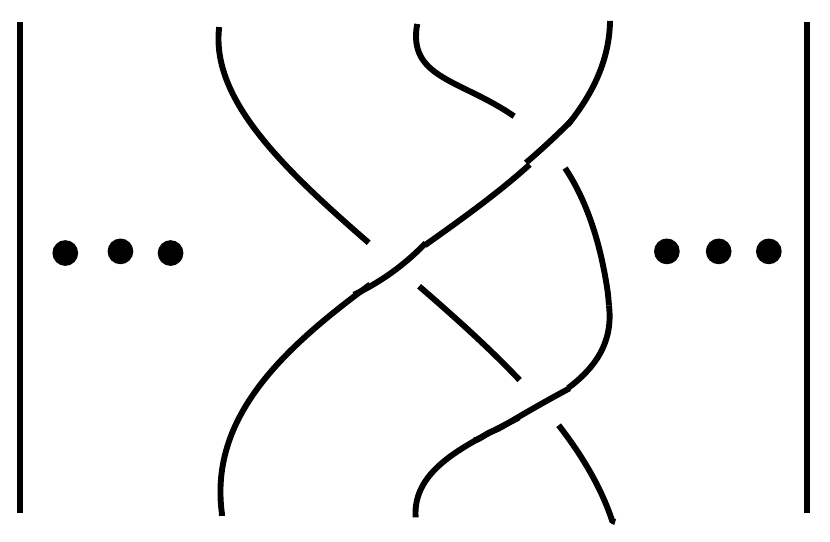}}
\end{eqnarray*}

\item $v_{i}v_{j}v_{i}=v_{j}v_{i}v_{j}$, for $|i-j|=1$
\begin{eqnarray*}
\raisebox{-.5cm}{\includegraphics[height=0.5in]{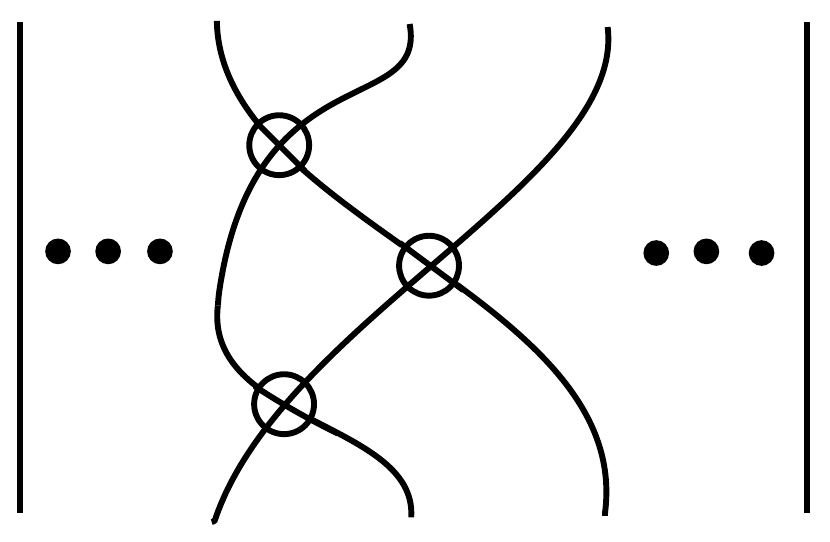}} \hspace{0.5cm} \stackrel{V3}{=} \hspace{0.2cm}&\raisebox{-.5cm}{\includegraphics[height=0.5in]{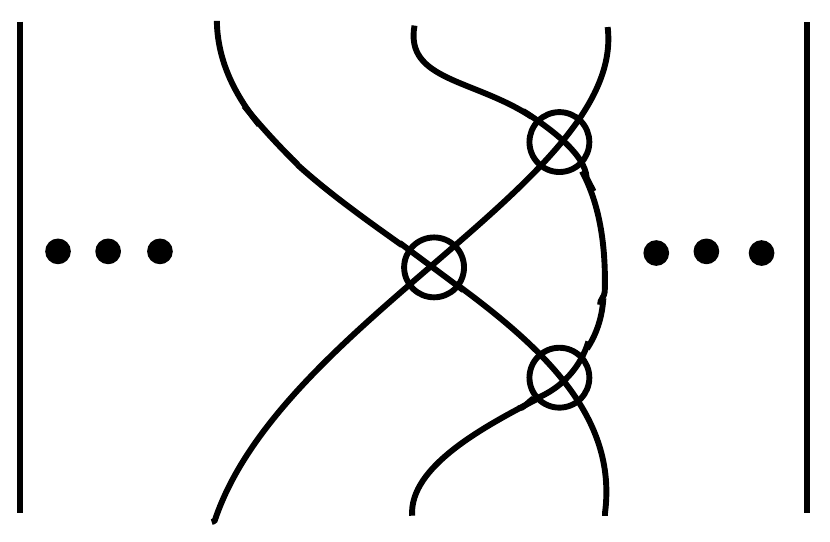}}
\end{eqnarray*}

\item $v_i\sigma_{j}v_i=v_{j}\sigma_iv_{j}$, for $|i-j|=1$ 
\begin{eqnarray*}
\raisebox{-.5cm}{\includegraphics[height=0.5in]{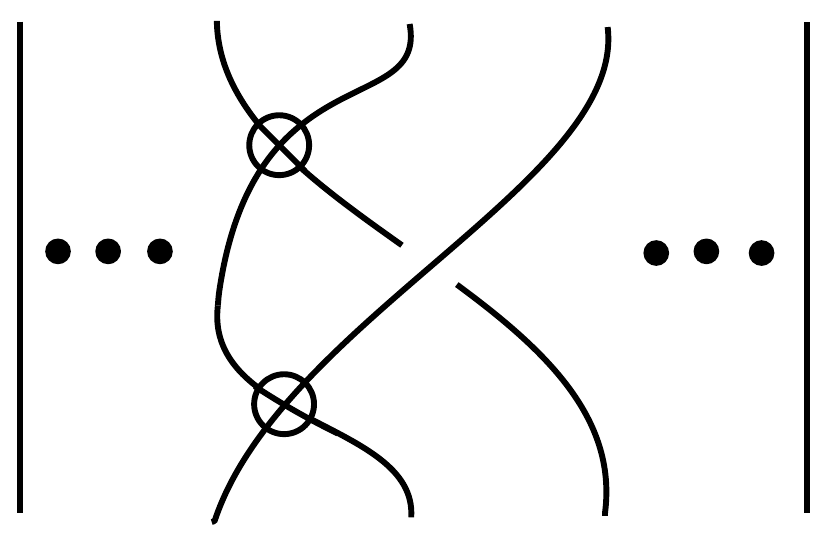}} \hspace{0.5cm} \stackrel{VR3}{=}\hspace{0.2cm} &\raisebox{-.5cm}{\includegraphics[height=0.5in]{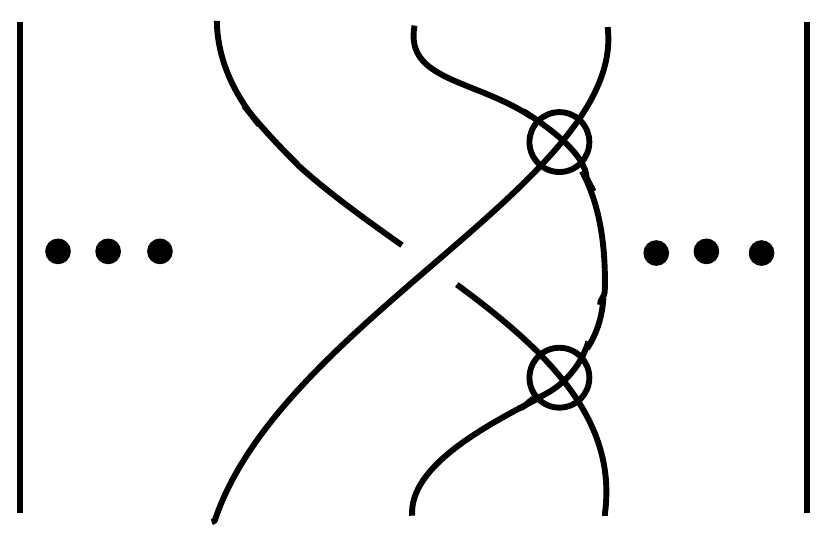}}
\end{eqnarray*}

\item $v_i\tau_{j}v_i=v_{j}\tau_iv_{j}$, for $|i-j|=1$
\begin{eqnarray*}
\raisebox{-.5cm}{\includegraphics[height=0.5in]{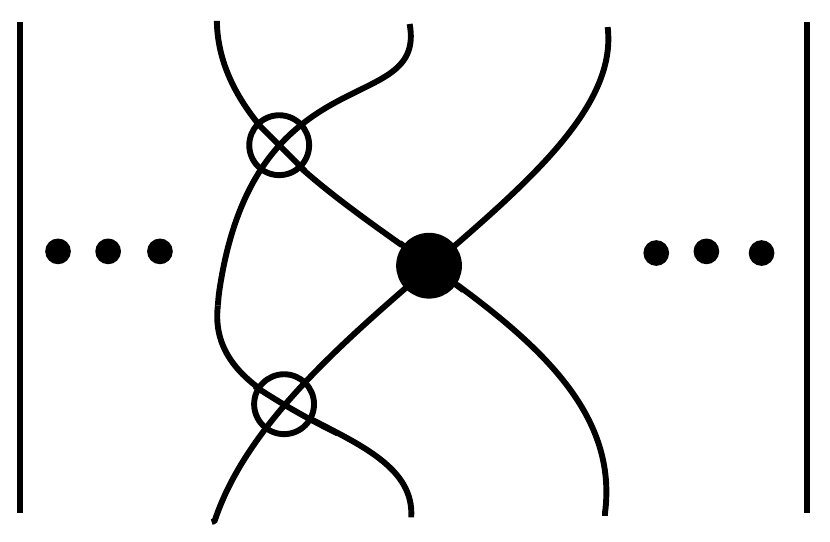}} \hspace{0.5cm} \stackrel{VS3}{=} \hspace{0.2cm} &\raisebox{-.5cm}{\includegraphics[height=0.5in]{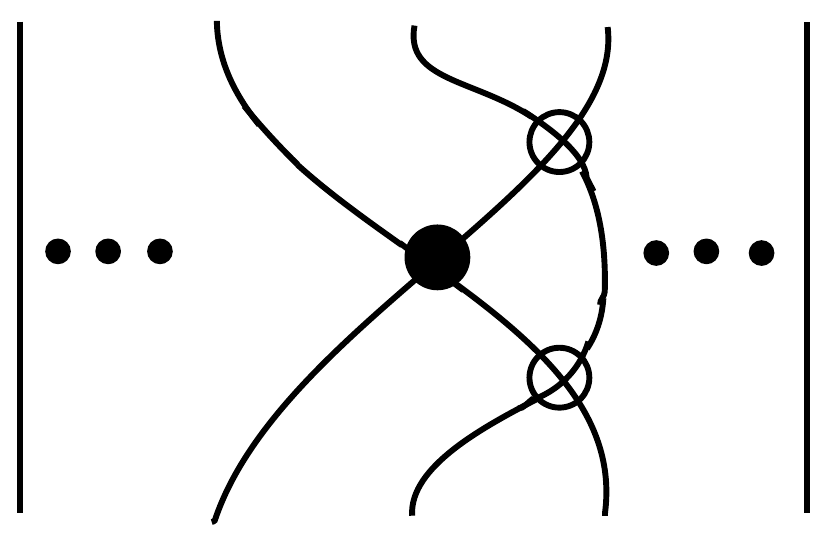}}
\end{eqnarray*}

\item $\sigma_i\sigma_j\tau_i=\tau_j\sigma_i\sigma_j$ for $|i-j|=1$
\begin{eqnarray*}
\raisebox{-.5cm}{\includegraphics[height=0.5in]{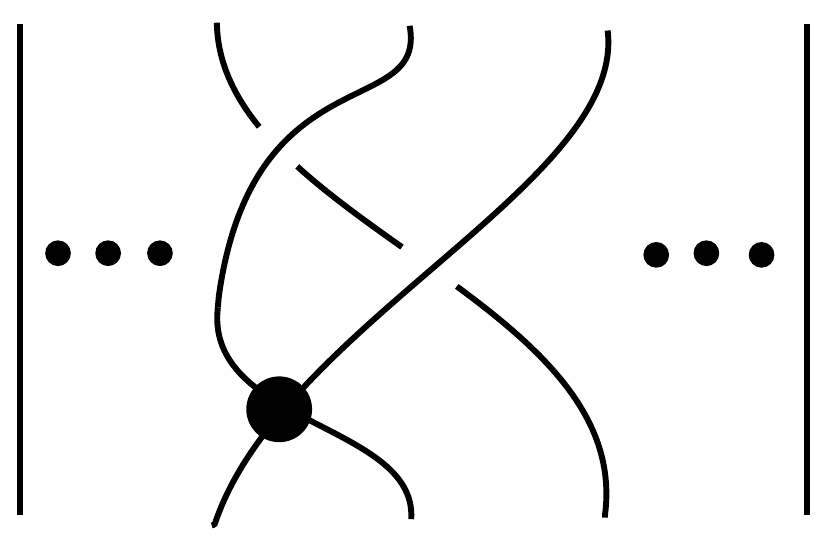}} \hspace{0.5cm} \stackrel{RS3}{=} \hspace{0.2cm} &\raisebox{-.5cm}{\includegraphics[height=0.5in]{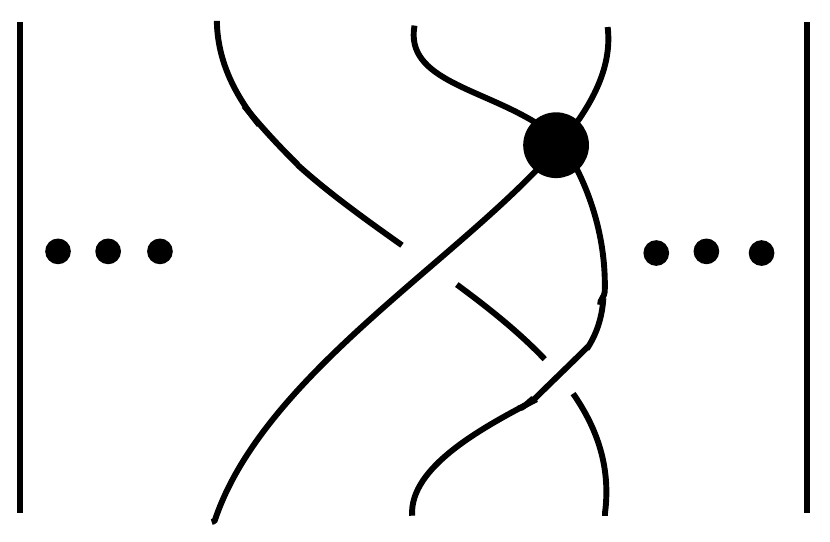}}
\end{eqnarray*}

\item $\sigma_i\tau_i=\tau_i\sigma_i$
\begin{eqnarray*}
\raisebox{-.5cm}{\includegraphics[height=0.5in]{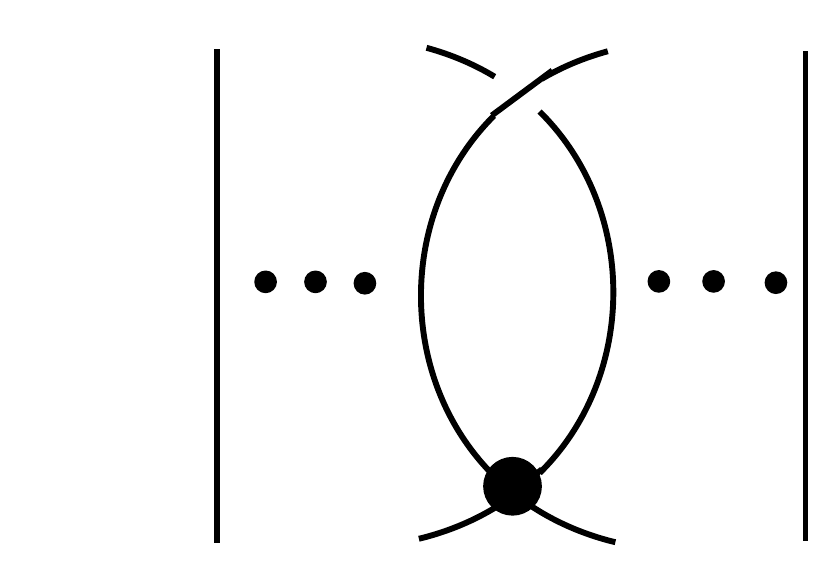}} \hspace{0.5cm} \stackrel{RS1}{=}  \hspace{-0.3cm} &\raisebox{-.5cm}{\includegraphics[height=0.5in]{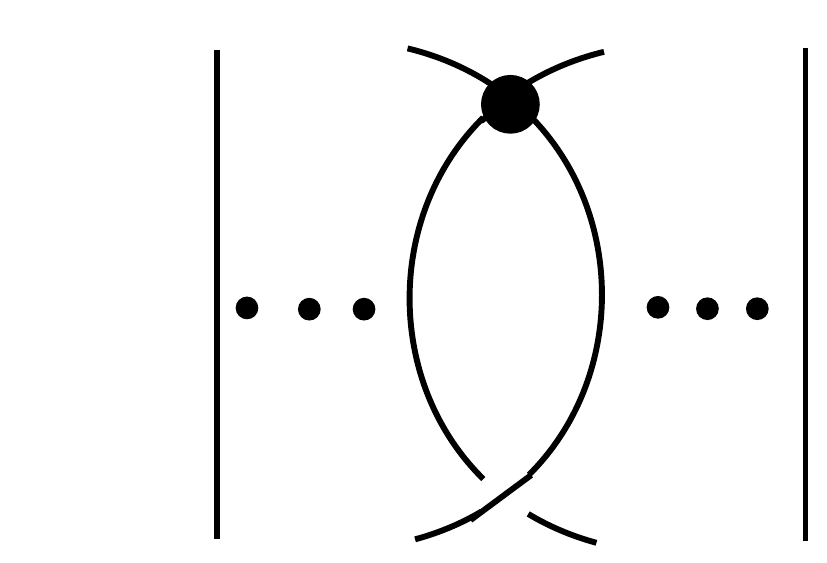}}
\end{eqnarray*}

\item $g_ih_i = h_ig_i, \,\, \forall g_i, h_i \in \{ \sigma_i, \tau_i, v_i  \} $ with $|i-j| >1$
\begin{eqnarray*}
\raisebox{-.5cm}{\includegraphics[height=0.5in]{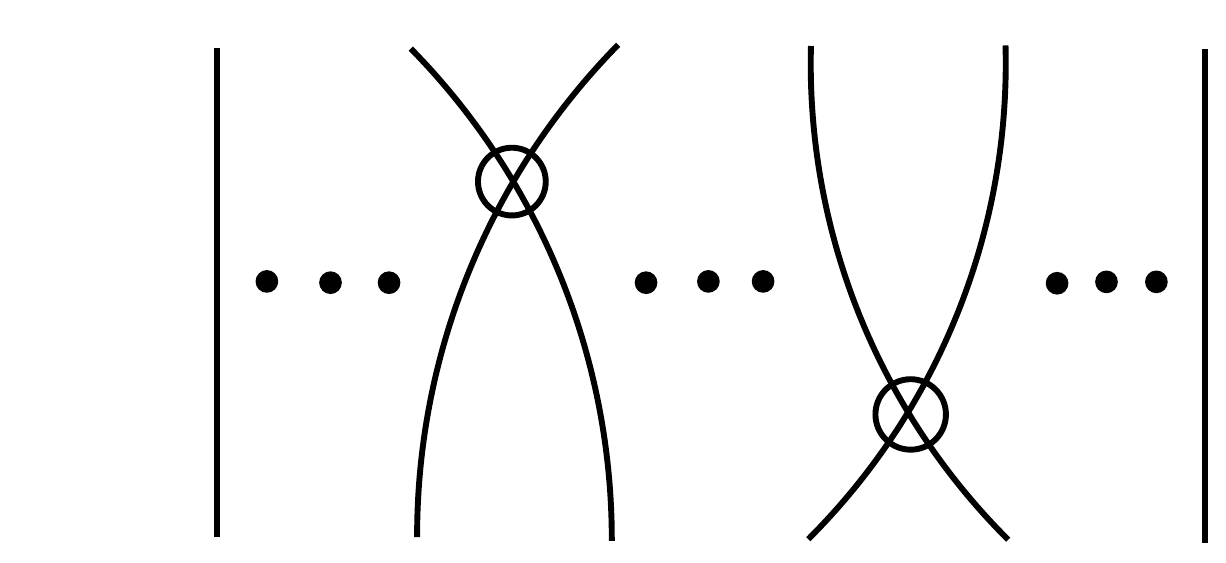}} \hspace{0.5cm} = \hspace{-.3cm} &\raisebox{-.5cm}{\includegraphics[height=0.5in]{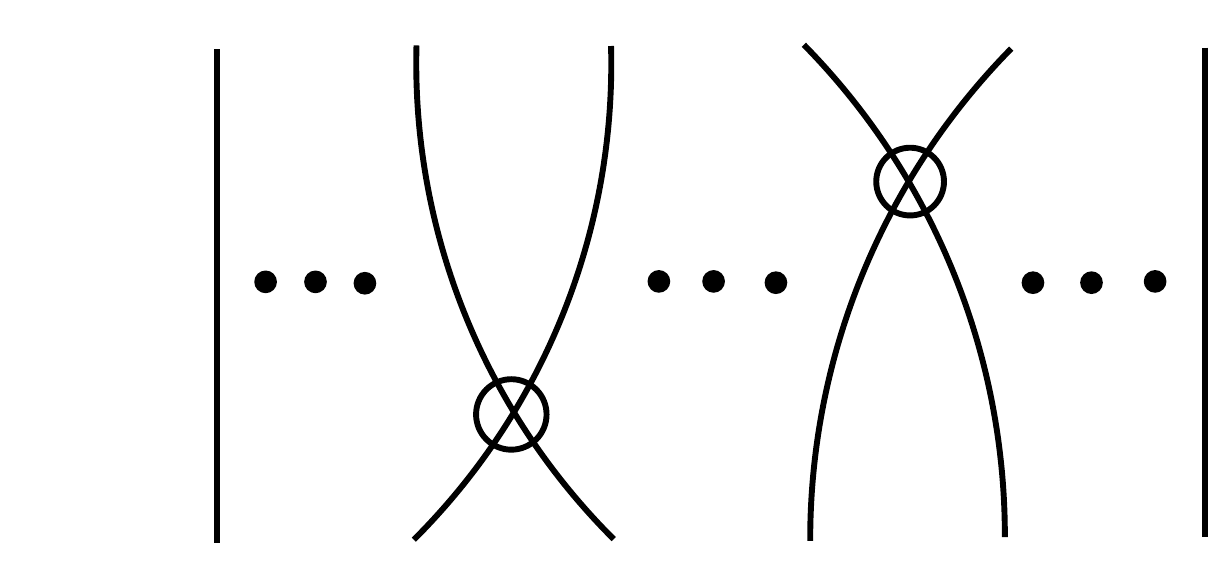}}
\end{eqnarray*}
\end{itemize}

These relations taken together define the isotopies for virtual singular braids. Each relation in ${VSB}_n$ is a braided version of a virtual singular link isotopy. That is, two equivalent virtual singular braids have isotopic closures. Note that the type 1 moves $R1$ and $V1$ are not reflected in the defining relations for ${VSB}_n$, because these moves cannot be represented using braids. Note also that only the generators $\tau_i$ are not invertible in ${VSB}_n$.

\subsection{A braiding algorithm} \label{sec:braiding}
In this section we present a method for transforming any virtual singular link diagram into the closure of a virtual singular braid. For that, we borrow the braiding algorithm introduced in~\cite{KL2} and extend it to our set-up where we add singular crossings, to prove a theorem for virtual singular links analogous to the Alexander theorem for classical braids and links.

We will work in the piecewise linear category, which gives rise to the operation of the \textit{subdivision} of an arc (in a virtual singular link diagram) into smaller arcs, by marking it with a point. Note that local minima and maxima are subdivision points of a diagram. 

\begin{definition}
We fix a height function in the plane of the diagram, and use the following conventions necessary for our braiding algorithm: First it is understood that only one crossing (real, singular or virtual) can occur at each level (with respect to the height function) in a virtual singular link diagram. Likewise, we arrange our diagram so that no crossings or subdivision points are vertically aligned, so as to avoid triple points when new pairs of braid strands are created with the same endpoints (this will be made more clear later as we explain our braiding algorithm). In addition, a crossing must not coincide with a local maximum or minimum. Lastly, a diagram should not have any horizontal arcs (it will only have \textit{up-arcs} and \textit{down-arcs}). If a virtual singular link is arranged so that it satisfies each of these conventions, we say that the diagram is in \textit{general position}. 
\end{definition}

It is easy to see that by applying small planar shifts, if necessary, any virtual singular link diagram can be transformed into a diagram in general position. 

When converting a virtual singular link diagram to a diagram in general position, we make certain choices which result in local shifts (which are called \textit{direction sensitive moves} in~\cite{KL2}) of crossings and subdivision points with respect to the horizontal or vertical direction. The \textit{swing moves} given in Figure~\ref{fig:swing moves} are the most interesting direction sensitive moves; these moves are necessary so that we avoid the coincidence between a crossing (real, singular, or virtual) and a maximum or minimum in a diagram.

\begin{figure}[ht]
\[ \raisebox{-10pt}{\includegraphics[height=0.4in]{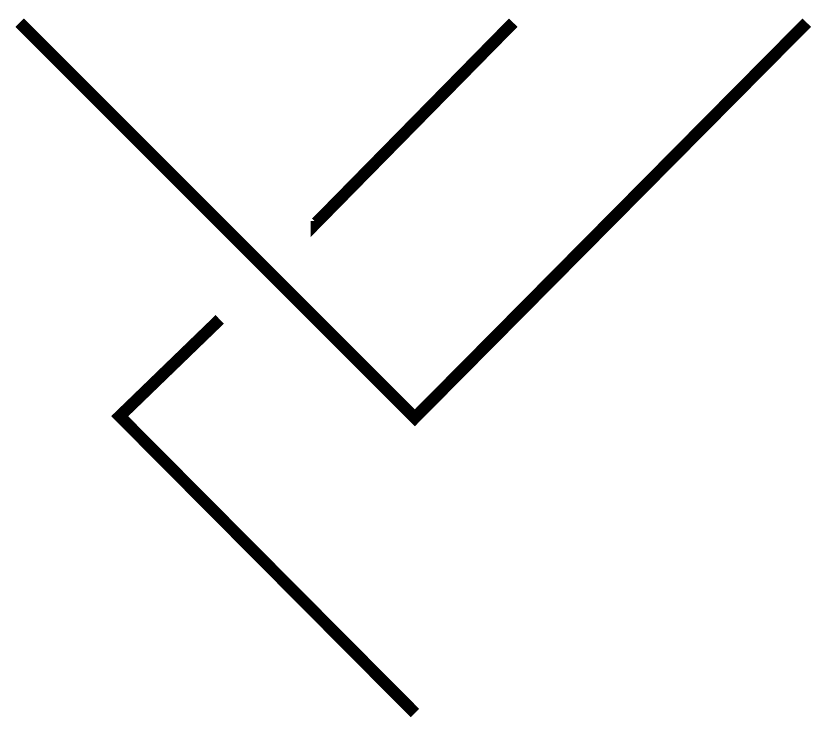}}\,\, \longleftrightarrow \,\, \raisebox{-10pt}{\includegraphics[height=0.4in]{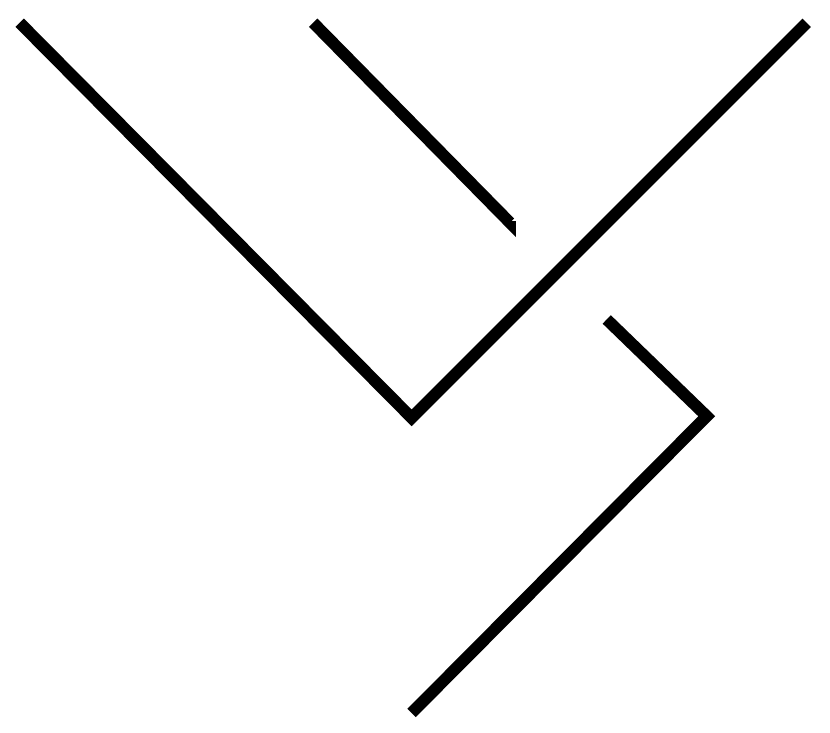}}
 \hspace{0.3cm} 
 \raisebox{-10pt}{\includegraphics[height=0.42in]{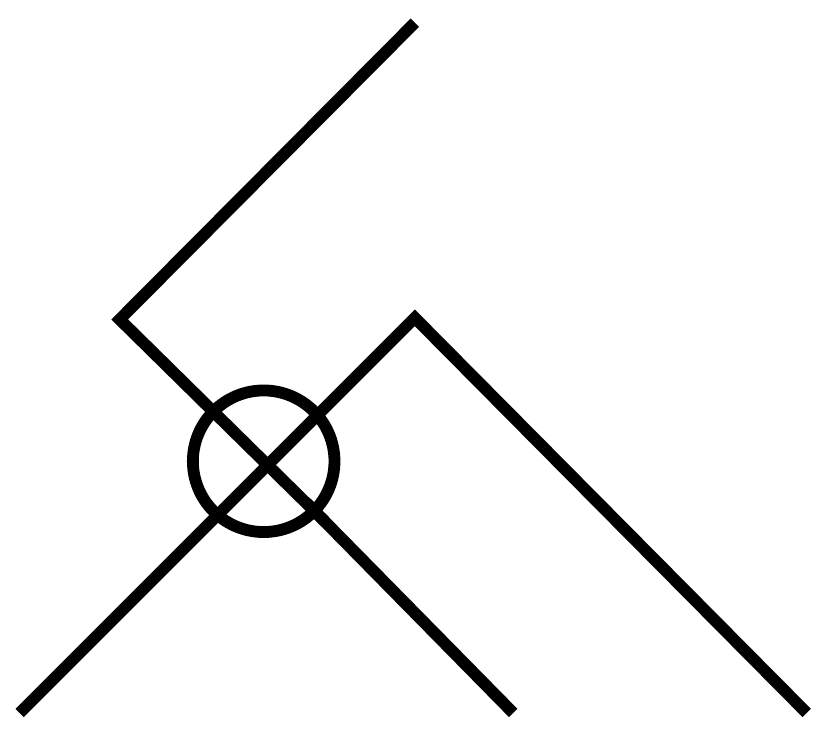}}\,\,\longleftrightarrow \,\, \raisebox{-10pt}{\includegraphics[height=0.42in]{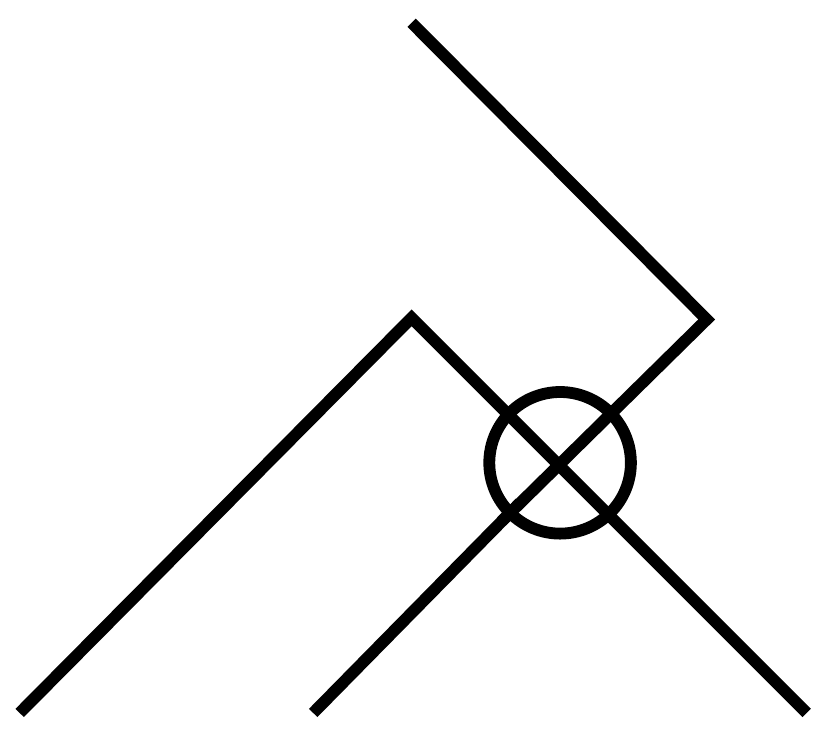}} \hspace{0.3cm} 
 \raisebox{-10pt}{\includegraphics[height=0.4in]{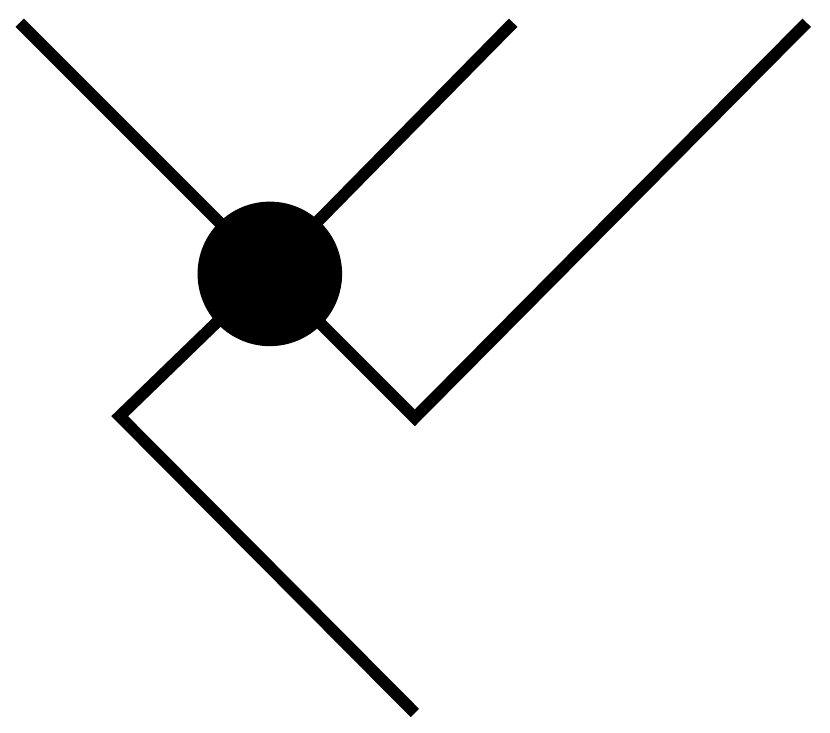}}\,\,\longleftrightarrow \,\, \raisebox{-10pt}{\includegraphics[height=0.4in]{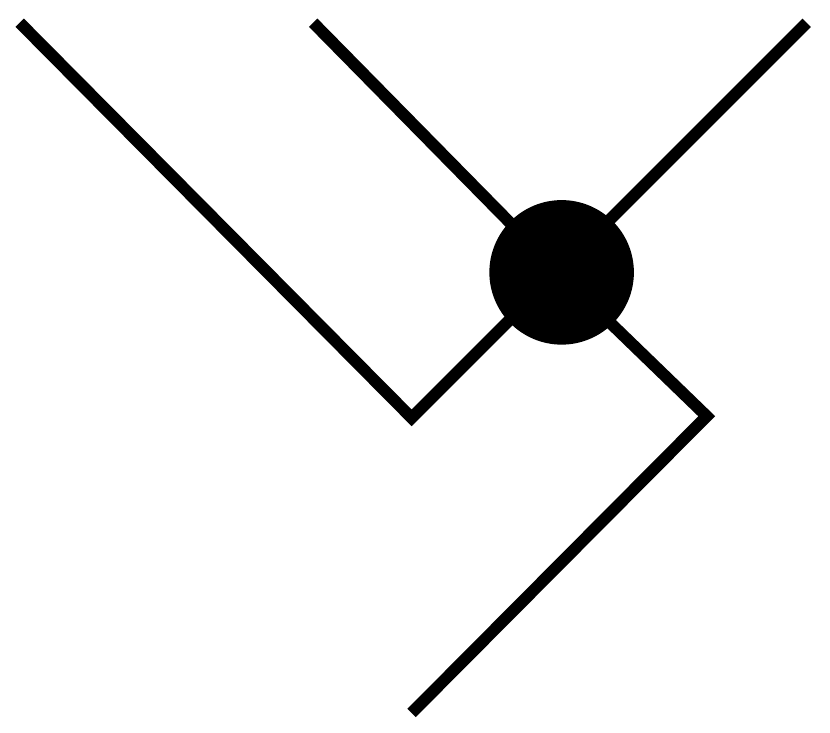}}\]
\caption{The swing moves}\label{fig:swing moves}
\end{figure} 

Two isotopic virtual singular link diagrams in general position differ by the extended virtual Reidemeister moves (provided in Figure~\ref{fig:isotopies}) and the direction sensitive moves. For the remainder of this section, we will work with virtual singular link diagrams in general position.

We describe now the \textit{braiding algorithm} for transforming an oriented virtual singular link diagram (assumed in general position) into the closure of a virtual singular braid.

After placing the subdivision points using the conventions explained above, we apply the braiding algorithm locally, by eliminating each up-arc in the diagram (which can be either an up-arc in a crossing or a free up-arc), one at a time.

We first braid the crossings containing one or two up-arcs. If a crossing has no up-arcs we leave it as it is. We place each crossing that needs to be braided in a narrow rectangular box, called the \textit{braiding box}, with the arcs of the crossing serving as diagonals of the box. A braiding box would have to be sufficiently narrow, so that the region it defines does not intersect the braiding box of another crossing. We braid each crossing, one at a time, according to the \textit{braiding chart} given in Figure~\ref{fig:b-c} (see also~\cite[Figure 7]{KL2}). Any new crossing created between the new braid strands and the rest of the diagram outside the braiding box will be assumed to be virtual; this is indicated abstractly by putting virtual crossings at the ends of the new pair of braid strands. 

\begin{figure}[ht] 
\begin{center}
  \begin{tabular}{ l | l | l }
    \hline
    $\reflectbox{ \raisebox{5pt}{\includegraphics[height=0.4in]{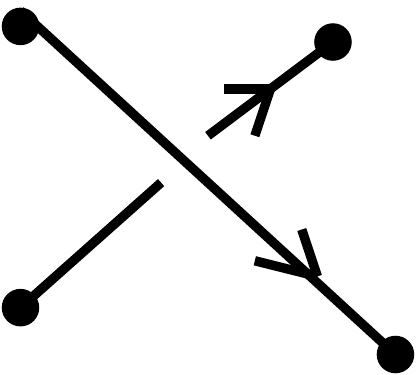}} }\,\, \raisebox{20pt}{$\longrightarrow$} \,\, \reflectbox{\raisebox{-30pt}{\includegraphics[height=1.2in]{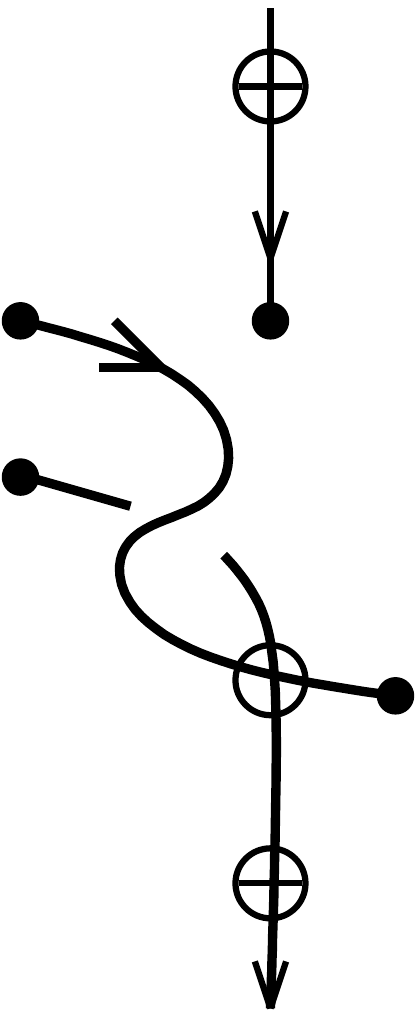}}} $
 &
 $ \reflectbox{\raisebox{5pt}{\includegraphics[height=0.4in]{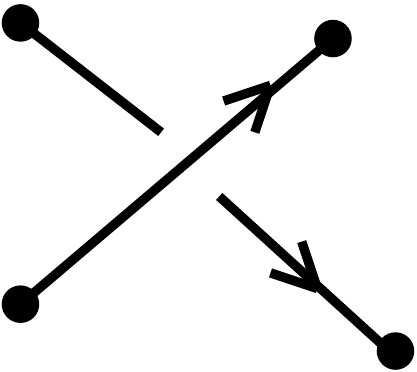}}}  \,\, \raisebox{20pt}{$\longrightarrow$} \,\,  \reflectbox{\raisebox{-30pt}{\includegraphics[height=1.2in]{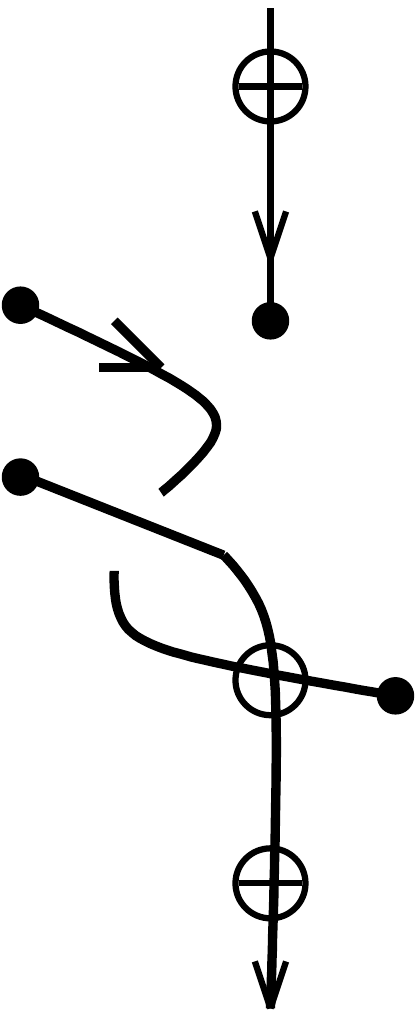}}} $ 
&
 $ \reflectbox{\raisebox{5pt}{\includegraphics[height=0.4in]{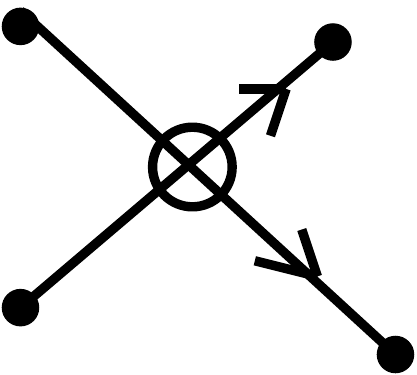}}}  \,\, \raisebox{20pt}{$\longrightarrow$} \,\,  \reflectbox{\raisebox{-25pt}{\includegraphics[height=1.1in]{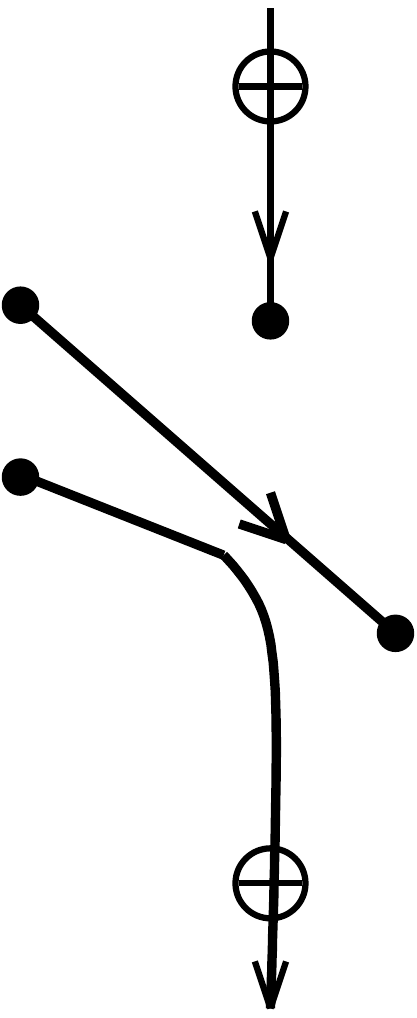}} }$ \\ \hline
    $\reflectbox{\raisebox{5pt}{\includegraphics[height=0.4in]{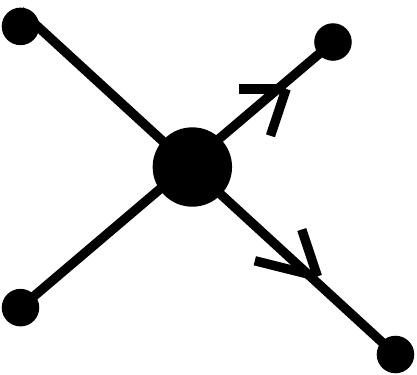}} } \,\, \raisebox{20pt}{$\longrightarrow$} \,\,  \reflectbox{\raisebox{-30pt}{\includegraphics[height=1.2in]{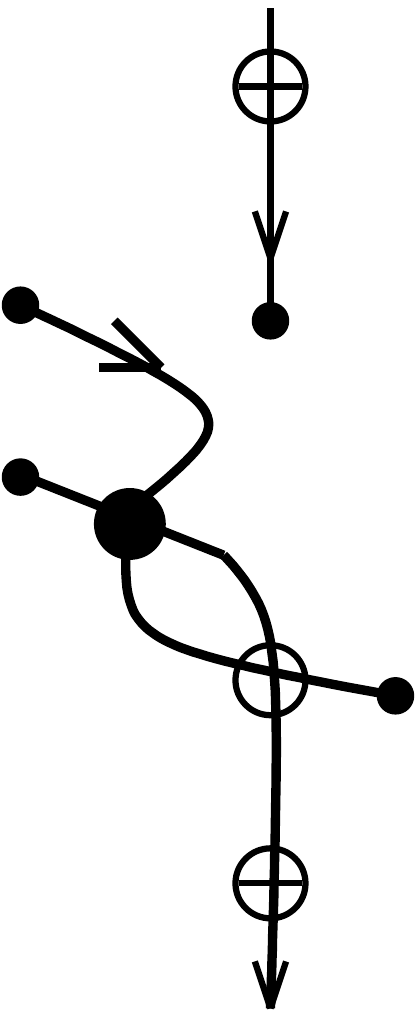}}} $ 
&
 $ \raisebox{5pt}{\includegraphics[height=0.4in]{Classical3A-new}}  \,\, \raisebox{20pt}{$\longrightarrow$} \,\, \raisebox{-30pt}{\includegraphics[height=1.2in]{Classical3B-new}} $ 
&
 $ \raisebox{5pt}{\includegraphics[height=0.4in]{Classical4A-new}} \,\, \raisebox{20pt}{$\longrightarrow$} \,\, \raisebox{-30pt}{\includegraphics[height=1.2in]{Classical4B-new}} $ \\ 
 \hline
 $ \raisebox{5pt}{\includegraphics[height=0.4in]{Virtual2A-new}} \,\, \raisebox{20pt}{$\longrightarrow$} \,\,\raisebox{-25pt}{\includegraphics[height=1.1in]{Virtual2B-new}} $ 
&
 $ \raisebox{5pt}{\includegraphics[height=0.4in]{Singular2A-new}} \,\, \raisebox{20pt}{$\longrightarrow$} \,\, \raisebox{-30pt}{\includegraphics[height=1.2in]{Singular2B-new}} $
 & $ \raisebox{10pt}{\includegraphics[height=0.4in]{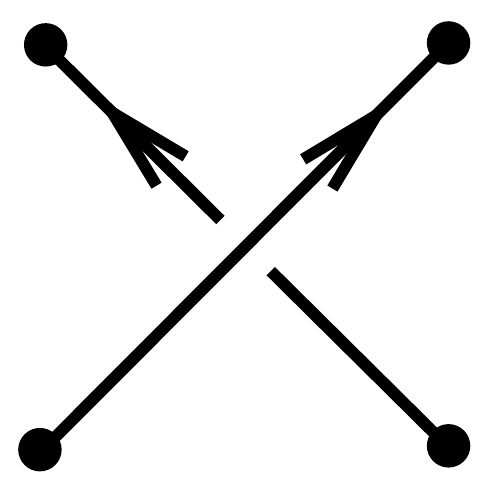}} \,\, \raisebox{25pt}{$\longrightarrow$} \,\, \raisebox{-40pt}{\includegraphics[height=1.4in]{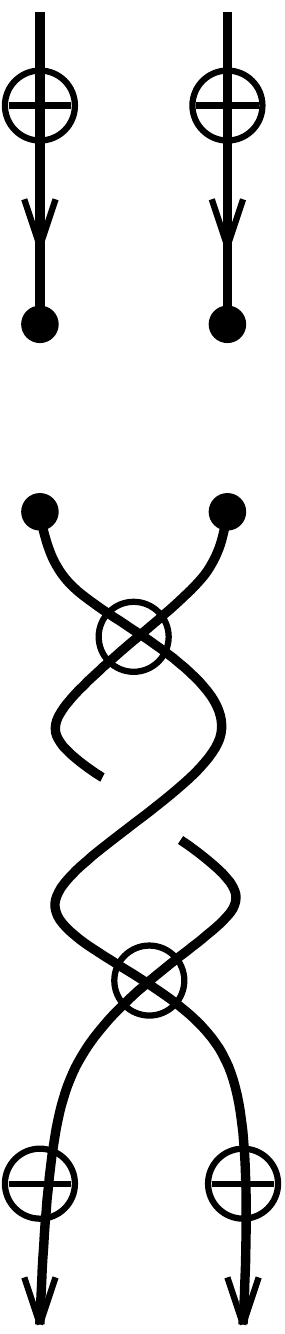}}  $ \\ 
 \hline
  $ \reflectbox{\raisebox{10pt}{\includegraphics[height=0.4in]{Classical5A}}}  \,\, \raisebox{25pt}{$\longrightarrow$} \,\,  \reflectbox{\raisebox{-40pt}{\includegraphics[height=1.4in]{Classical5B-new}} }$  
&
 $ \raisebox{10pt}{\includegraphics[height=0.37in, angle=90]{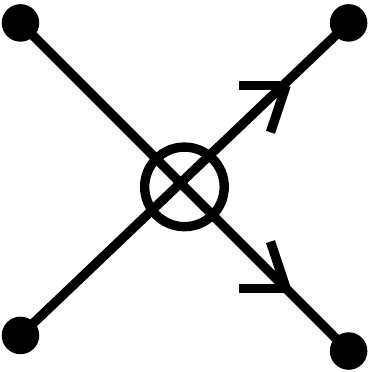}} \,\, \raisebox{20pt}{$\longrightarrow$} \,\, \raisebox{-30pt}{\includegraphics[height=1.2in]{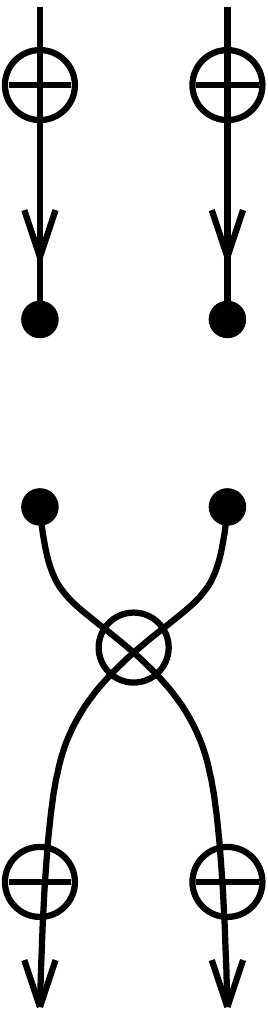}}$ 
  &
  $ \raisebox{10pt}{\includegraphics[height=0.35in, angle = 90]{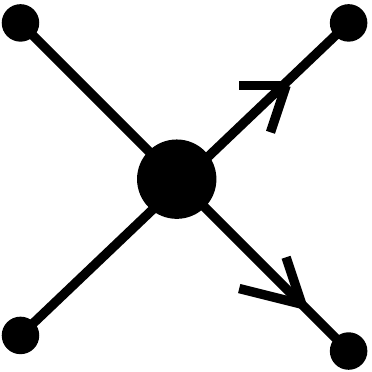}} \,\, \raisebox{20pt}{$\longrightarrow$} \,\,  \raisebox{-45pt}{\includegraphics[height=1.4in]{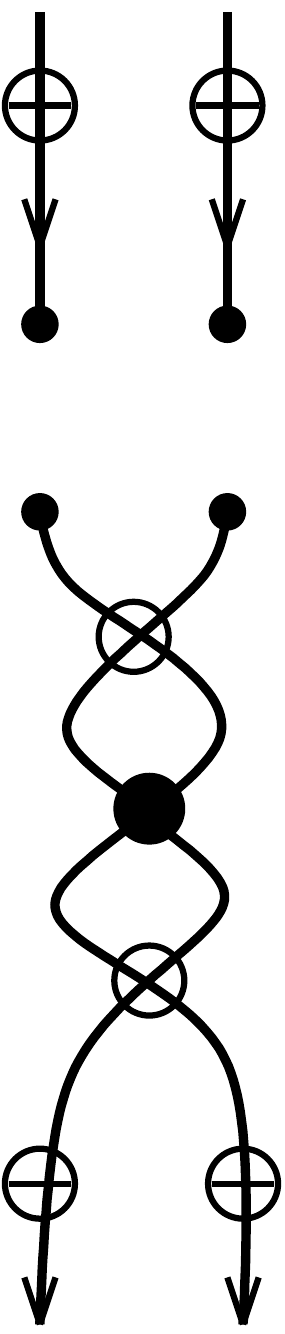}} $
          \\ \hline
\end{tabular}
\end{center}
\caption{The braiding chart for crossings} \label{fig:b-c}
\end{figure} 

Note that, locally speaking, for each crossing that was braided, connecting the corresponding pair of braid strands (outside of the resulting diagram) yields a virtual singular tangle diagram (with four endpoints) which is isotopic to the starting one (the tangle represented by the crossing in the braiding box).

The free up-arcs are arcs joining braiding boxes. Once all crossings have been braided, we braid each of the free up-arcs using the \textit{basic braiding move} depicted in Figure~\ref{fig:basic-braiding} (see~\cite[Figure 9]{KL2}). During this move, we first cut a free up-arc and then extend the upper end upward and the lower end downward, such that the new pair of strands are vertically aligned and such that they cross only virtually any other arcs in the original diagram (which is represented by an abstract virtual crossing on the ends of the new braid strands), as shown in Figure~\ref{fig:basic-braiding}. As in the case of braiding a crossing, by connecting the pair of the new braid strands outside of the original diagram results in a local virtual singular tangle diagram (with two endpoints) which is isotopic to the local tangle before the braiding.
 
  \begin{figure}[ht]
\[ \raisebox{-15pt}{\includegraphics[height=.5in]{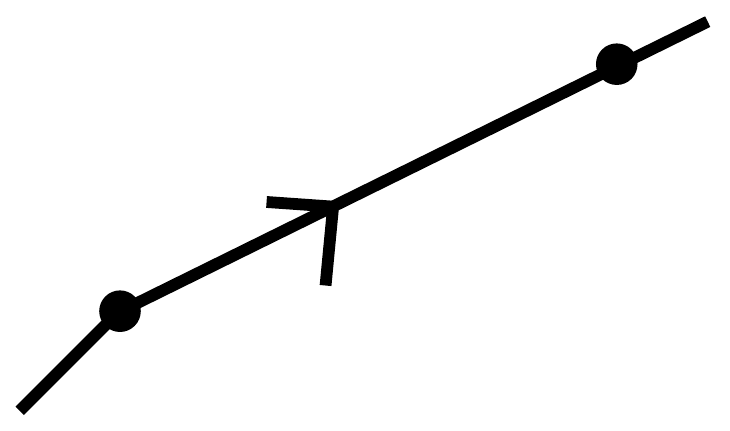}}\,\,{\longrightarrow} \,\,\raisebox{-40pt}{\includegraphics[height=1.2in]{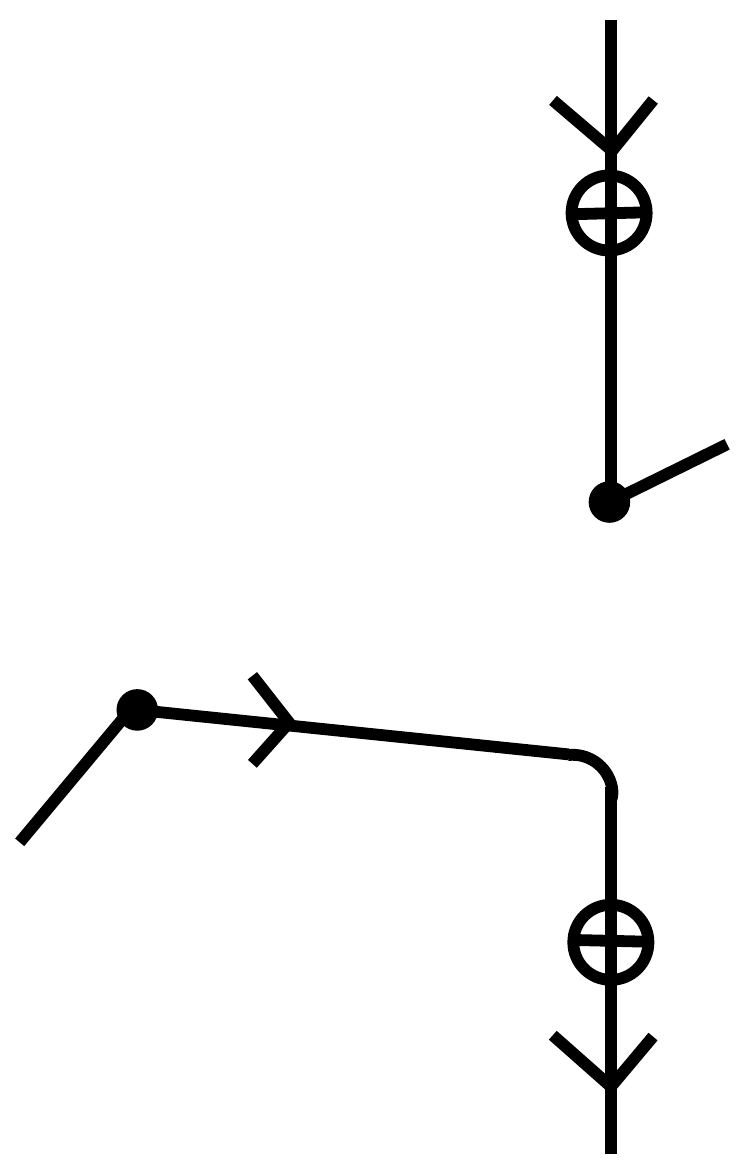}}\,\, \]
\caption{A basic braiding move}\label{fig:basic-braiding}
\end{figure}

The braiding algorithm given above will braid any virtual singular link diagram, creating a virtual singular braid whose closure is isotopic to the original diagram. Indeed, for all braiding moves, even for those that do not contain singular crossings, it is important to observe that there may be singular crossings in the rest of the braid and that upon closure these are detoured freely by the virtual crossings of the new braid strands. Therefore, we have proved the following statement.

\begin{theorem}[\textbf{Alexander-type theorem for virtual singular links}] \label{Alexander thm}
Every oriented virtual singular link can be represented as the closure of a virtual singular braid.
\end{theorem}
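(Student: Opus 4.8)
The plan is to establish Theorem~\ref{Alexander thm} constructively, by verifying that the braiding algorithm described above in fact terminates and outputs a genuine virtual singular braid whose closure recovers the original link diagram. The essential observation is that all the real work has already been set up: we have the braiding chart of Figure~\ref{fig:b-c} for crossings, the basic braiding move of Figure~\ref{fig:basic-braiding} for free up-arcs, and the reduction to general-position diagrams. So the proof is primarily an argument that these local procedures combine coherently into a global one.

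First I would fix an oriented virtual singular link diagram $D$ and, using the small planar shifts and swing moves already justified, put $D$ into general position, so that every arc is either an up-arc or a down-arc, no two crossings or subdivision points share a height, and no crossing coincides with a local extremum. Next I would enumerate the finitely many up-arcs of $D$ (those occurring within a crossing, and the free up-arcs joining braiding boxes) and apply the local moves one at a time: each crossing containing an up-arc is enclosed in a sufficiently narrow braiding box and replaced according to Figure~\ref{fig:b-c}, and each free up-arc is braided via Figure~\ref{fig:basic-braiding}. The key point to record is the local invariance already noted in the text: connecting the new pair of braid strands outside the braiding box (respectively outside the diagram) produces a virtual singular tangle that is isotopic, under the extended virtual Reidemeister moves, to the tangle present before the move. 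Thus each step preserves the virtual singular link type of the closure.

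Since each application strictly decreases the number of remaining up-arcs, the process terminates after finitely many steps, leaving a diagram with no up-arcs outside the braiding boxes; I would then argue that such a diagram, read with respect to the height function, is exactly the closure of a virtual singular braid $\beta$: all strands are monotone (descending) between the braiding region and the closure arcs, and every crossing appears inside a box as one of the generators $\sigma_i^{\pm 1}, \tau_i, v_i$. The closure arcs on the outside, which carry only the abstract virtual crossings introduced at the ends of the new strands, realize the closure operation. Finally, chaining together the local isotopies gives an isotopy from the closure of $\beta$ back to $D$, which proves the theorem.

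The step I expect to require the most care is the global coherence of the local moves, rather than any single local replacement. Two concerns must be addressed: first, that braiding boxes can be chosen narrow enough to be mutually disjoint and to meet the rest of the diagram only in virtual crossings, which is where the general-position hypothesis (no vertical alignment of crossings or subdivision points, avoiding triple points) is used; and second, that the newly created virtual crossings at the ends of the braid strands do not interfere with subsequent braiding moves. Here the crucial remark is the one already emphasized in the text: any singular crossings lying elsewhere in the diagram are detoured freely across the new strands by these virtual crossings upon closure. This is precisely the feature that allows the classical braiding argument of~\cite{KL2} to extend verbatim to the singular setting, and it is the point at which the forbidden moves of Figure~\ref{fig:forbidden moves} must be scrupulously avoided.
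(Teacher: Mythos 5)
Your proposal is correct and follows essentially the same route as the paper: the paper's proof of Theorem~\ref{Alexander thm} is precisely the braiding algorithm of Section~\ref{sec:braiding} (general position, the braiding chart of Figure~\ref{fig:b-c} for crossings, the basic braiding move of Figure~\ref{fig:basic-braiding} for free up-arcs), together with the observation that each local move preserves the closure's isotopy type because the new strands cross the rest of the diagram only virtually and so any singular crossings are detoured freely upon closure. Your added remarks on termination and on the disjointness of braiding boxes simply make explicit what the paper leaves implicit.
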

%%%%%%%%%%%%%%%%%%%%%%%%%%%%%%%%%%%%%%%%%%%%%%%%%%%
\subsection{L-moves and Markov-type theorems for virtual singular braids}\label{sec:L-moves}

Two virtual singular braids may have isotopic closures, and thus we would like to describe virtual singular braids that result in isotopic virtual singular link diagrams via the closure operation. Therefore, we are interested in Markov-type theorems for virtual singular braids and links. For this purpose, we need to introduce the \textit{singular $L_v$-moves} for virtual singular braids. These moves enlarge the set of the $L_v$-moves for virtual braids, described in~\cite{KL2}. Here, the subscript $v$ stands for `virtual'.  

We remind the reader that the classical $L$-moves were introduced by S. Lambropoulou in~\cite{L} to provide a one-move Markov-type theorem for classical braids and links. We also refer the reader to~\cite{LR}, where the $L$-move equivalence for classical braids is established. 

We recall from~\cite{KL2} that a \textit{basic $L_v$-move} involves cutting a braid strand and pulling the upper endpoint of the cut downward and the lower endpoint upward, and in doing so, creating a pair of new braid strands which cross virtually all of the other strands in the diagram; this is abstractly denoted by a pair of virtual crossings at the points where the two new braid strands cross the box in which the $L_v$-move is applied (see Figure~\ref{fig:basic L_v move}). 
\begin{figure}[ht]
\[\raisebox{-30pt}{\includegraphics[height=1in]{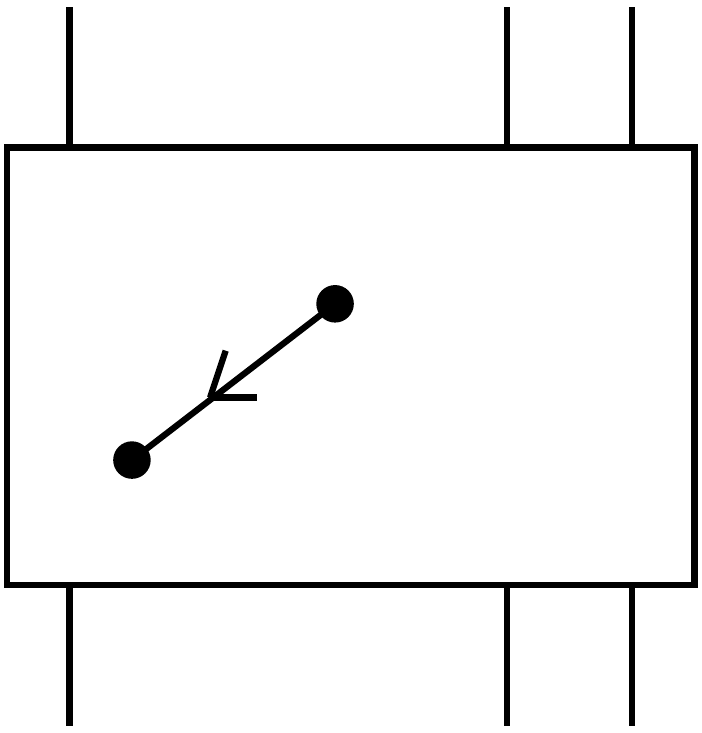}}\,\, \stackrel{\text{basic} \ L_v-\text{move}}{\longrightarrow} \,\, \raisebox{-45pt}{\includegraphics[height=1.4in]{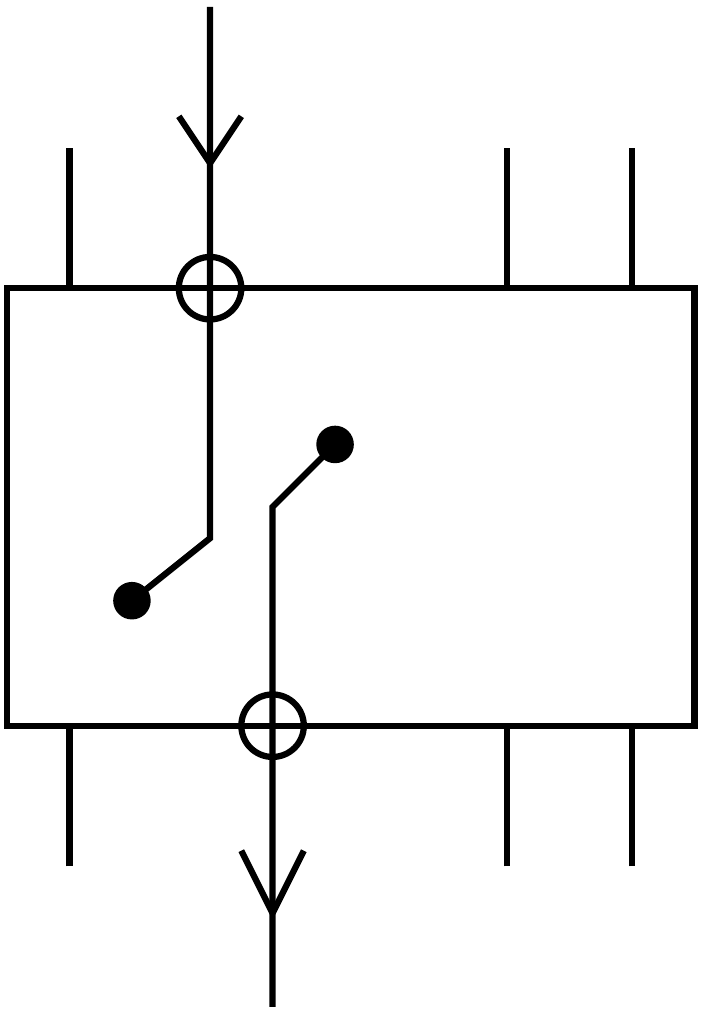}}\]
\caption{A basic $L_v$-move}\label{fig:basic L_v move}
\end{figure}

Note that an $L_v$-move may introduce a crossings, which may be real or virtual, as shown in Figure~\ref{fig:v-r L_v moves} (see~\cite[Figure 11]{KL2}). To stress the existence of the real or virtual crossing, these moves are called the \textit{real $ L_v$-move} or \textit{virtual $ L_v$-move}, respectively (abbreviated to $rL_v$- or $vL_v$-move, respectively), and there are two versions of them, namely \textit{left} or \textit{right} (depending whether the new crossing is on the left or on the right of the arc that was cut during the move). Figure~\ref{fig:v-r L_v moves} displays right virtual and left real $L_v$-moves.
\begin{figure}[ht]
\[   \raisebox{-45pt}{\includegraphics[height=1.4in]{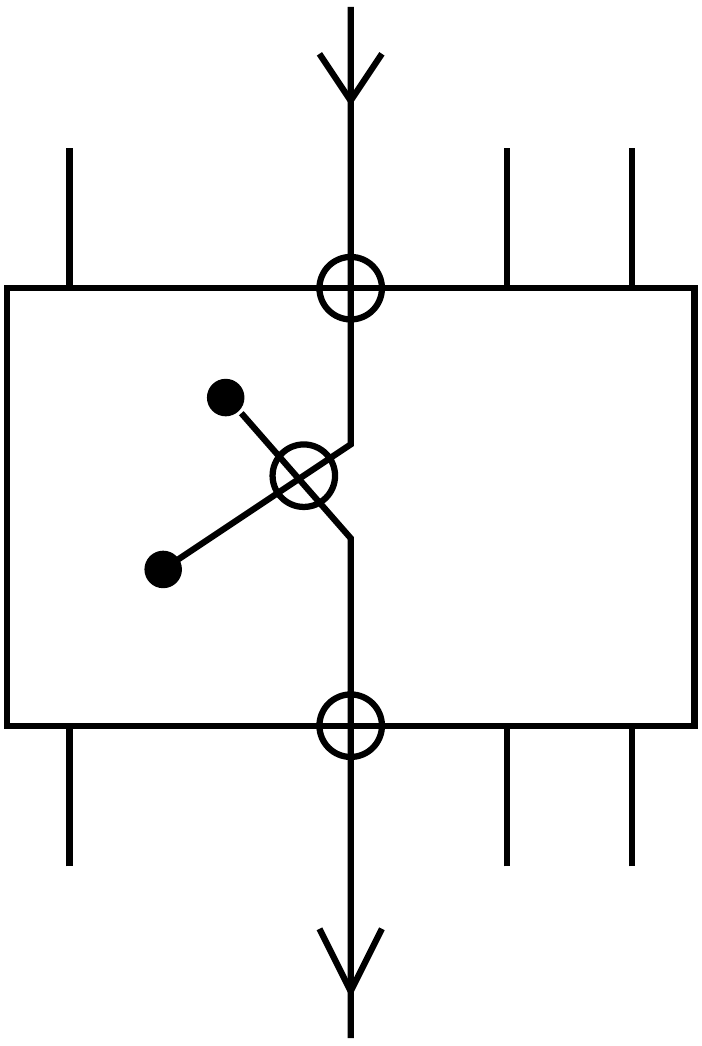}}\,\,\stackrel{\text{right}\ vL_v-\text{move}}{\longleftarrow}
 \,\,\raisebox{-30pt}{\includegraphics[height=1in]{basic-L1}}\,\, \stackrel{\text{left}\  rL_v-\text{move}}{\longrightarrow} \,\, \raisebox{-45pt}{\includegraphics[height=1.4in]{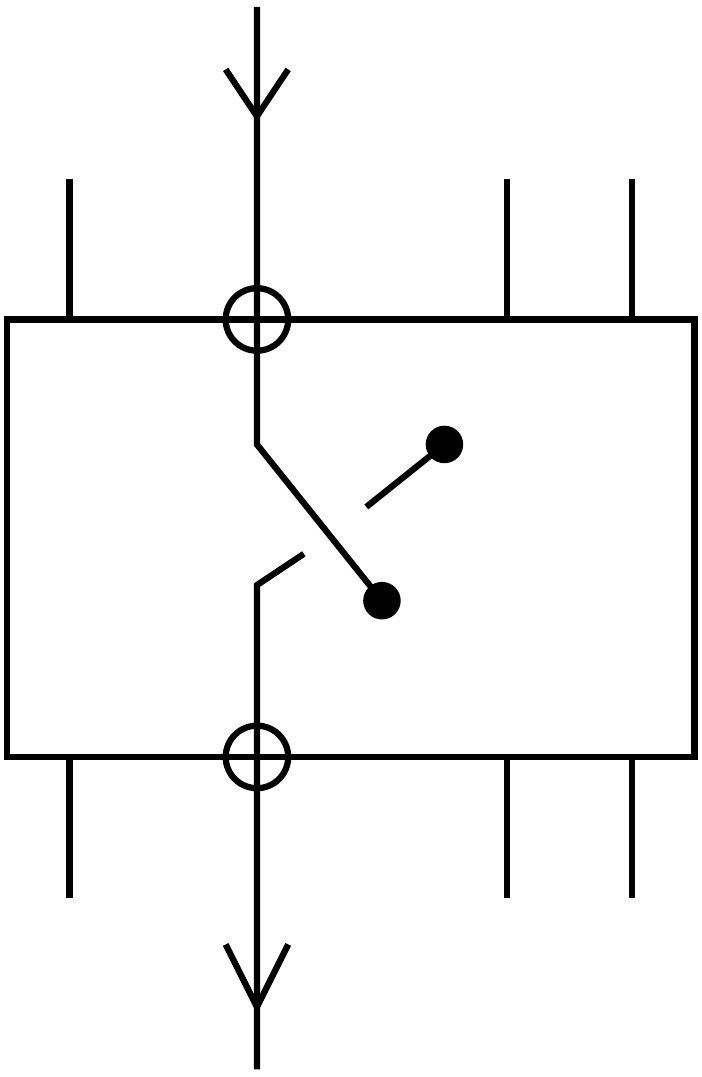}}\]
\caption{Right virtual and left real $L_v$-moves}\label{fig:v-r L_v moves}
\end{figure}

Note that by connecting the pair of the newly created braid strands (outside of the diagram) we obtain a tangle diagram which is isotopic to the tangle diagram we started with (the detoured loop contracts to a kink which involves either a virtual crossing or a real crossing). This is explained in Figure~\ref{fig:close L_v moves}.

\begin{figure}[ht]
\[\raisebox{-13pt}{\includegraphics[height=.4in]{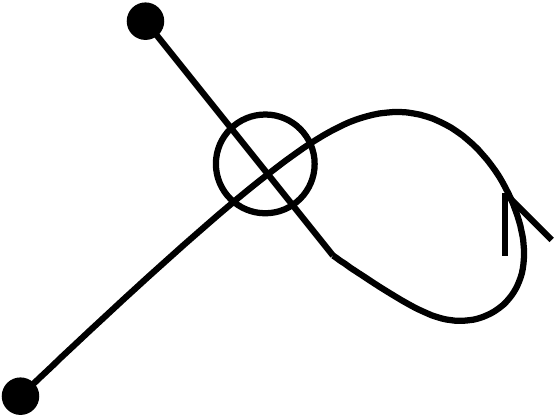}} \stackrel{V1}{\longleftrightarrow}\raisebox{-13pt}{\includegraphics[height=.4in]{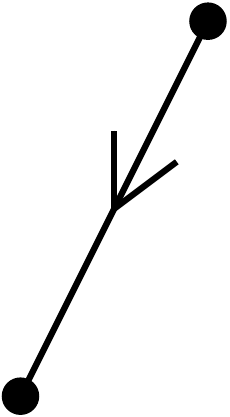}}  \stackrel{R1}{\longleftrightarrow}\raisebox{-13pt}{\includegraphics[height=.4in]{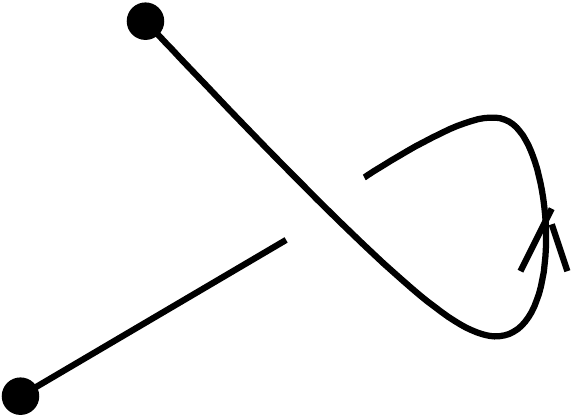}}\]
\caption{Closures of right virtual and right real $L_v$-moves}\label{fig:close L_v moves}
\end{figure}

\begin{definition}
A \textit{threaded $L_v$-move} is an $L_v$-move with a virtual crossing in which, before stretching the arc of the kink, we perform a classical type 2 Reidemeister move using another strand of the braid, called the \textit{thread}. Depending whether we pull the kink over or under the thread, we have an \textit{over-threaded $L_v$-move} or an \textit{under-threaded $L_v$-move}; both of these moves come with the \textit{left} and \textit{right} versions. (We refer the reader to the analogous definition in~\cite[Definition 4]{KL2}.)

\end{definition}

Figure~\ref{fig:threaded-move} shows under-threaded $L_v$-moves, both left and right versions. 
Due to the forbidden moves, a threaded $L_v$-move cannot be simplified on the braid level; that is, the move does not involve isotopic braids but isotopic closures of braids. 

In addition, we can create a \textit{multi-threaded $L_v$-move} by performing two or more classical type 2 Reidemeister moves before pulling open the arc of the kink. (See~\cite[Figure 14]{KL2}.)

\begin{figure}[ht]
\[\reflectbox{\raisebox{-50pt}{\includegraphics[height=1.4in]{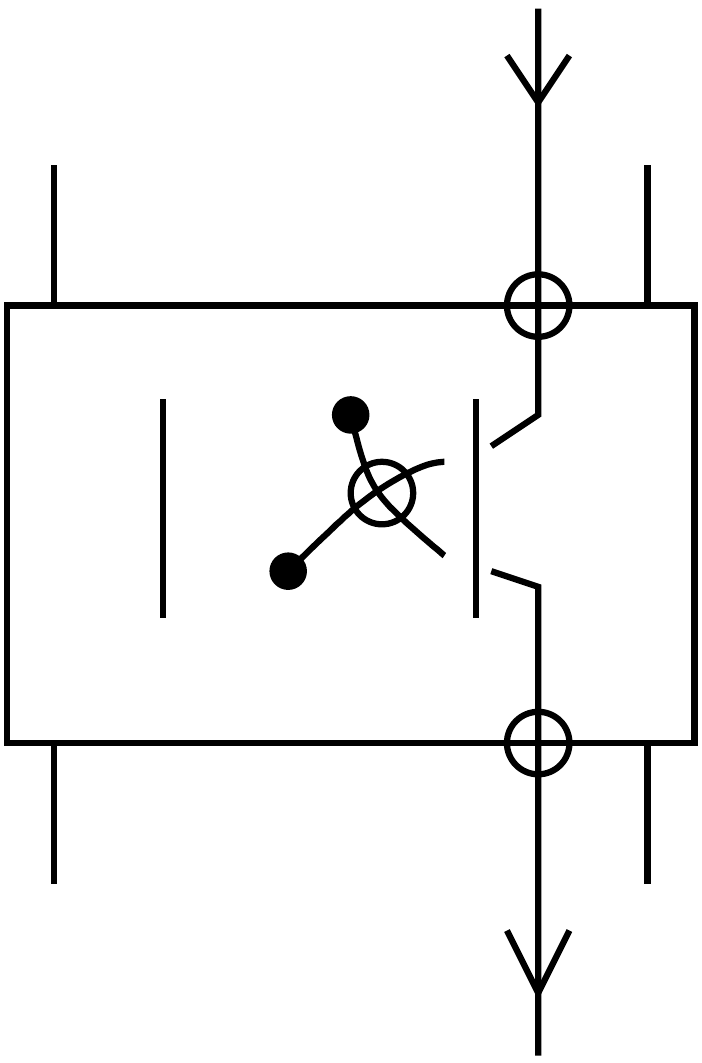}}}\,\,\,\, \longleftarrow \,\,\,\, \raisebox{-35pt}{\includegraphics[height=1in]{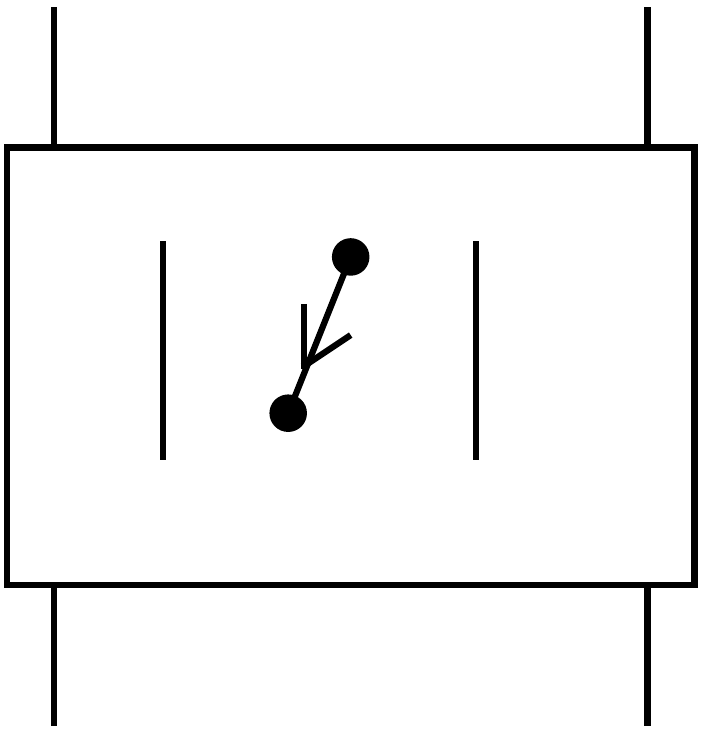}}\,\,\,\, \longrightarrow \,\, \,\, \raisebox{-50pt}{\includegraphics[height=1.4in]{right-threaded-move}}\]
\caption{Left and right under-threaded $L_v$-moves}\label{fig:threaded-move}
\end{figure}

When singular crossings are present, there is another type of threaded move in which the thread `crosses' the detoured loop in a pair of a singular crossing and a real crossing. We call such a move an \textit{$rs$-threaded $L_v$-move}; this move also comes in two variants, namely \textit{left} and \textit{right}. Figure~\ref{fig:rs-threaded move} exemplifies such a move, with only one of the two versions for the real crossing involved in the move. An $rs$-threaded $L_v$-move cannot be applied (simplified) in the braid. However, it is not hard to see that the closures of the two sides of an $rs$-threaded $L_v$-move are isotopic diagrams (via an $RS1$ move), as explained in Figure~\ref{fig:closed-rs-threaded}.

\begin{figure}[ht]
\[ \reflectbox{\raisebox{-49pt}{\includegraphics[height=1.4in]{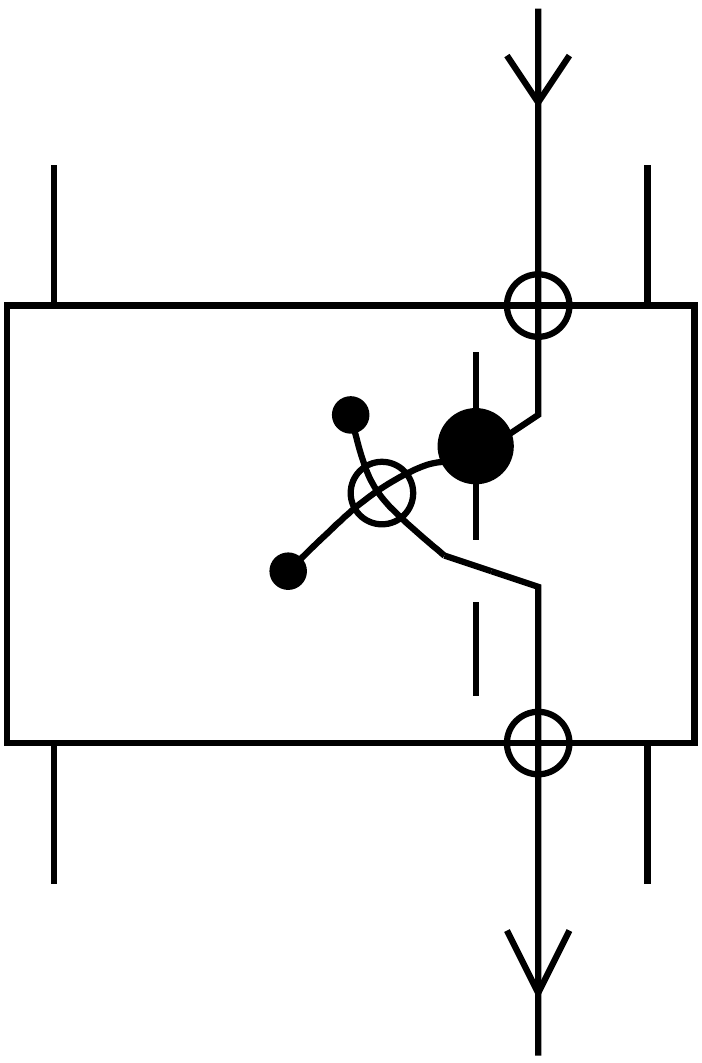}}}\,\, \longleftrightarrow \,\, \reflectbox{\raisebox{-49pt}{\includegraphics[height=1.4in]{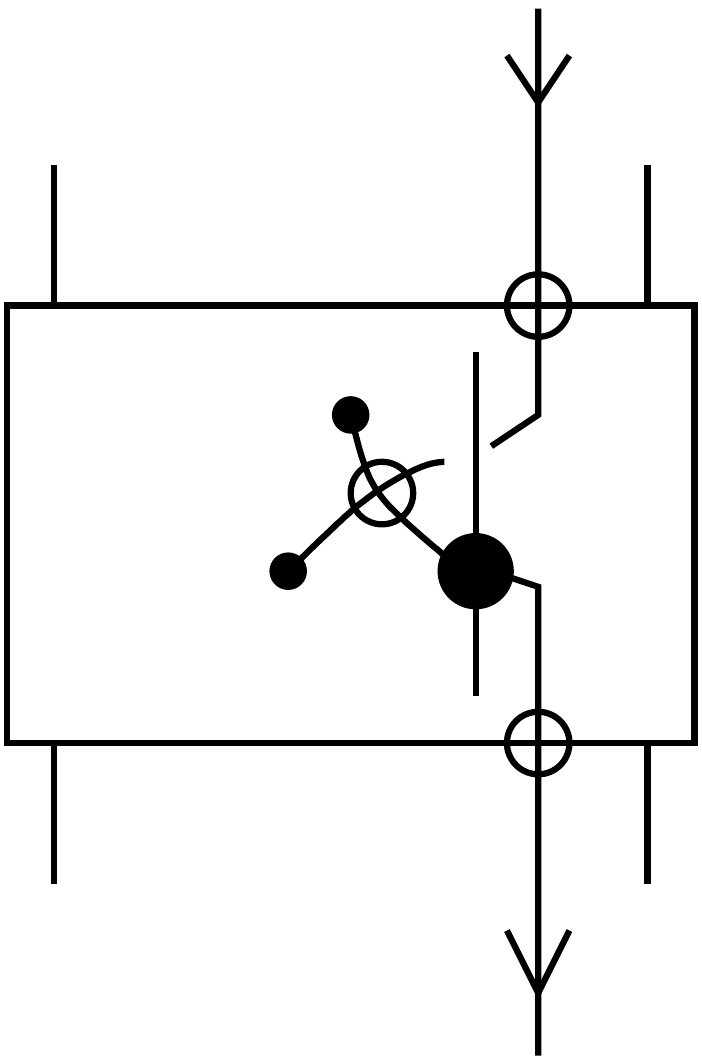}}}
 \hspace{1cm} 
 \raisebox{-49pt}{\includegraphics[height=1.4in]{right-rv-threaded-move1}}\,\,\longleftrightarrow \,\, \raisebox{-49pt}{\includegraphics[height=1.4in]{right-rv-threaded-move2}}\]
\caption{Left and right $rs$-threaded $L_v$-moves}\label{fig:rs-threaded move}
\end{figure}

\begin{figure}[ht]
\[\reflectbox{\raisebox{-19pt}{\includegraphics[height=0.5in]{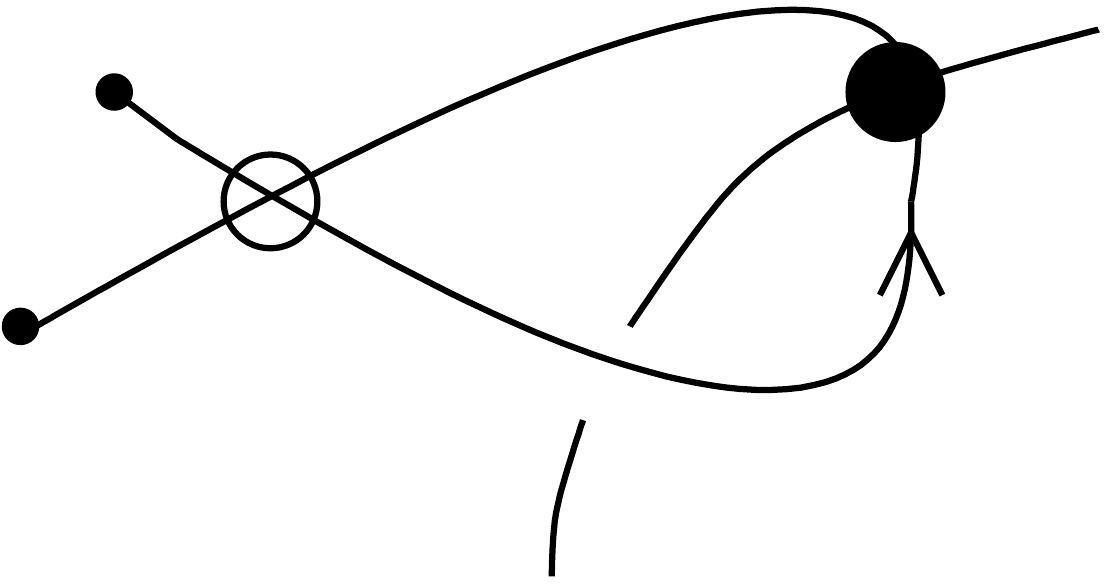}}}\,\,\,\,  \stackrel{RS1}{\longleftrightarrow} \,\, \reflectbox{\raisebox{-19pt}{\includegraphics[height=0.5in]{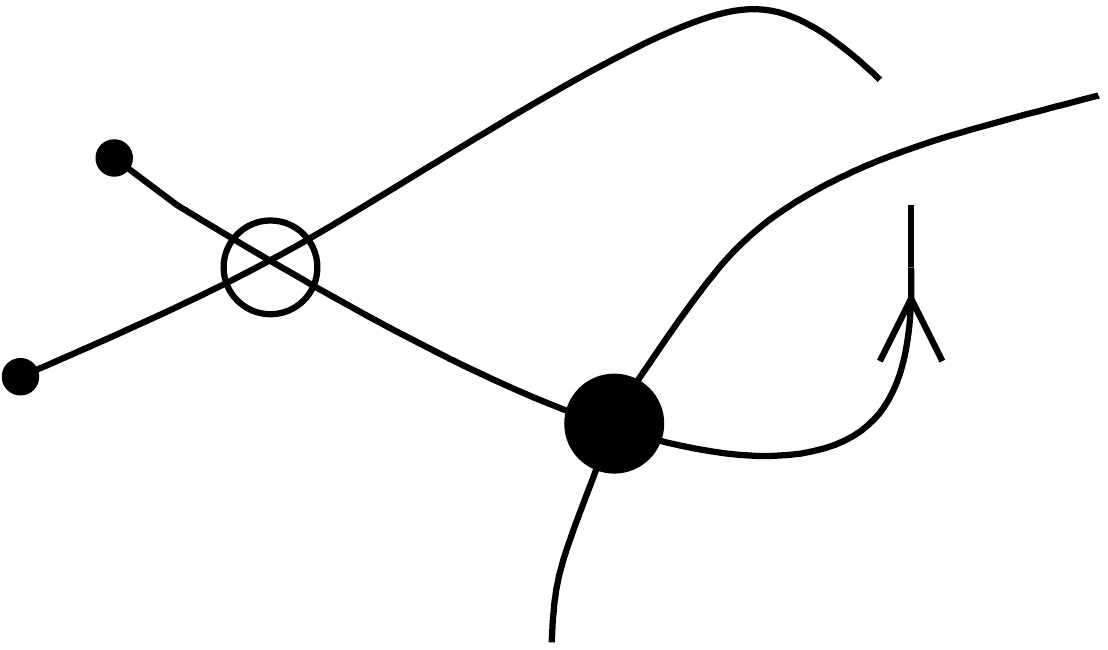}}} \hspace{1.5cm} \raisebox{-19pt}{\includegraphics[height=0.5in]{cl-right-rv1}}\,\,  \stackrel{RS1}{\longleftrightarrow} \,\, \,\, \raisebox{-19pt}{\includegraphics[height=0.5in]{cl-right-rv2}} \]
\caption{Closures of left and right $rs$-threaded $L_v$-moves}\label{fig:closed-rs-threaded}
\end{figure}

Finally, we define the notion of \textit{conjugation} and \textit{commuting} in the virtual singular braid monoid, ${VSB}_n$. Given a virtual singular braid $\omega \in {VSB}_n$, we say that the braids $\omega \sigma _i^{\pm 1} \sim \sigma_i^{\pm 1} \omega$, for $1\leq i \leq n-1$, are related by \textit{real conjugation}. Similarly, we say that the braids $\omega v_i \sim v_i \omega$ are related by \textit{virtual conjugation}, where $1\leq i \leq n-1$. Note that since $v_i$ is its own inverse in ${VSB}_n$, virtual conjugation is equivalent to $\omega \sim v_i \omega v_i$. Similarly, real conjugation can be rewritten in the form $\omega \sim \sigma_i \omega \sigma_i^{-1}$ or $\omega \sim \sigma_i^{-1} \omega \sigma_i$. Finally, we say that $\omega \tau_i \sim \tau_i \omega$ are related by \textit{singular commuting} (note that $\tau_i$ is not invertible in ${VSB}_n$). 

\begin{definition}
We say that two virtual singular braids are \textit{singular $L_v$-equivalent} if they differ by virtual singular braid isotopy and a finite sequence of the following moves or their inverses:
\begin{enumerate}
\item[(i)] Real conjugation and singular commuting
\item[(ii)] Right virtual and right real $L_v$-moves
\item[(iii)] Left and right under-threaded $L_v$-moves
\item[(iv)] Left and right $rs$-threaded $L_v$-moves.
\end{enumerate}
\end{definition}

 We remark that the singular $L_v$-equivalence on virtual singular braids contains as a subset the $L$-equivalence on virtual braids defined in~\cite[Definition 6]{KL2}. We remind the reader that the $L$-equivalence for virtual braids comprises the real conjugation, the right real and right virtual $L_v$-moves, the left and right under-threaded $L_v$-moves, and the virtual braid isotopy.
 
 It was proved in~\cite{KL2} that the virtual conjugation, basic $L_v$-moves, left real and left virtual $L_v$-moves, over-threaded $L_v$-moves, and multi-threaded $L_v$-moves follow from the $L$-equivalence. Therefore, these moves also follow from the singular $L_v$-equivalence, and thus we do not need to include them in our $L$-move Markov-type theorem for virtual singular braids, which we are now ready to state and prove.

\begin{theorem}[\textbf{$L$-move Markov-type theorem for virtual singular braids}] \label{Markov L-moves}
Two virtual singular braids have isotopic closures if and only if they are singular $L_v$-equivalent.
\end{theorem}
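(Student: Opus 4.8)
The plan is to prove the two directions separately, following the strategy of Lambropoulou~\cite{L} and of Kauffman--Lambropoulou~\cite{KL2}, adapted to accommodate singular crossings. For the ``if'' direction (singular $L_v$-equivalence implies isotopic closures), I would verify that each generating move preserves the isotopy class of the closure. Real conjugation and singular commuting clearly do, since closing up $\omega g$ and $g\omega$ (for $g \in \{\sigma_i^{\pm 1}, \tau_i\}$) produces the same virtual singular link diagram up to planar isotopy of the closure arcs. For the real and virtual $L_v$-moves, closing the pair of newly created braid strands contracts the detoured loop back to a kink, which is removed by an $R1$ or $V1$ move, as shown in Figure~\ref{fig:close L_v moves}; the under-threaded $L_v$-moves are handled exactly as in~\cite{KL2}; and for the $rs$-threaded $L_v$-move the closure arcs realize the two sides as diagrams related by an $RS1$ move, as in Figure~\ref{fig:closed-rs-threaded}. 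Since, as noted before the statement, all the omitted moves (virtual conjugation, basic and left $L_v$-moves, over-threaded and multi-threaded moves) already follow from the singular $L_v$-equivalence, this direction is complete.

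For the ``only if'' direction, I would start from the fact that if two virtual singular braids have isotopic closures, then their closures, viewed as diagrams in general position, are related by a finite sequence of the extended virtual Reidemeister moves of Figure~\ref{fig:isotopies} together with the direction-sensitive moves. By Theorem~\ref{Alexander thm} and the braiding algorithm of Section~\ref{sec:braiding}, it then suffices to establish two facts: (a) the braid produced by the braiding algorithm is well-defined up to singular $L_v$-equivalence, and (b) each generating equivalence move on the diagram level lifts to a singular $L_v$-equivalence of the corresponding braided braids. For (a), I would show that the output braid is independent, up to singular $L_v$-equivalence, of the auxiliary choices made during braiding: the placement of the subdivision points, the order in which up-arcs and crossings are braided, and the swing and other direction-sensitive moves used to reach general position. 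Changing any of these alters a braided strand by a detour that can be absorbed by virtual and real $L_v$-moves together with virtual singular braid isotopy, precisely as in~\cite{KL2}; the only new feature is that singular crossings elsewhere in the braid contribute additional virtual crossings on the new strands, which are transparent to the closure.

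For (b), I would braid both sides of each generating move and exhibit an explicit sequence of singular $L_v$-moves and conjugations relating the results. It is enough to treat a generating set of the equivalence, namely the classical moves $R1$, $R2$, $R3$, the singular moves $RS1$ and $RS3$, and the detour move (which generates $V1$, $V2$, $V3$, $VR3$, $VS3$). The moves $R2$, $R3$, and $RS3$ are braided versions of defining relations of ${VSB}_n$, so on the braid level they reduce to braid isotopy together with conjugation; the move $R1$ introduces a kink, which is exactly a real $L_v$-move; and the detour move is realized by virtual $L_v$-moves and left/right under-threaded $L_v$-moves, following~\cite{KL2}.

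I expect the main obstacle, and the reason the new $rs$-threaded $L_v$-move had to be introduced, to be the $RS1$ move in the presence of the forbidden moves of Figure~\ref{fig:forbidden moves}. When an $RS1$ configuration, a singular crossing adjacent to a kink, is braided, the forbidden moves prevent one from sliding the singular crossing past the accompanying real crossing on the braid level, so the equivalence cannot be realized by braid isotopy or by the $L_v$-moves inherited from the virtual (non-singular) theory. The crux of the argument will be to check that the $rs$-threaded $L_v$-move of Figure~\ref{fig:rs-threaded move} captures precisely this obstruction, and, more globally, to verify by a careful case analysis that the four families of moves listed in the definition of singular $L_v$-equivalence are exactly what is needed to realize every braided generating move, so that no further move must be added.
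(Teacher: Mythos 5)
Your overall skeleton matches the paper's proof: the easy direction by checking closures of the generating moves, and the converse by (a) showing the braiding algorithm is well defined up to singular $L_v$-equivalence under all choices (subdivision points, order of braiding, direction-sensitive/swing moves) and (b) lifting each generating isotopy move to a singular $L_v$-equivalence of the braided diagrams, delegating the moves that involve only real and virtual crossings to~\cite{KL2} and isolating the $RS1$ move as the place where the new $rs$-threaded $L_v$-move is needed. That is exactly the architecture of the paper's argument.

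However, there is a genuine gap in your step (b): the claim that ``$R2$, $R3$, and $RS3$ are braided versions of defining relations of $VSB_n$, so on the braid level they reduce to braid isotopy together with conjugation'' is only true when \emph{every strand in the move is oriented downward}. Each extended Reidemeister move must be verified for \emph{all} orientations of its strands; when a strand points upward, the braiding algorithm replaces its up-arcs by new pairs of braid strands, and the two sides of the move are no longer instances of a monoid relation at all. For $R2$ and $R3$ this can indeed be outsourced to~\cite[Theorem 2]{KL2}, as the paper does, since those moves involve no singular crossings; but $RS3$ (and $VS3$) cannot be outsourced, and this is precisely where the paper does substantial new work: it reduces $RS3$ with one, two, and three upward-oriented strands to the all-downward case via $R2$ moves, the previously established $RS1$ cases, and swing moves (Figures~\ref{RS3-case1}, \ref{RS3-case2}, \ref{RS3-case3}). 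Note also that $RS1$ itself requires three orientation cases: two are handled by the left and right $rs$-threaded $L_v$-moves (Figures~\ref{RS1-case1}, \ref{RS1-case2}), but the case with both strands oriented upward (Figure~\ref{RS1-case3}) is resolved by virtual conjugation, singular commuting, real conjugation, and the braid-form $RS1$ relation rather than by an $rs$-threaded move. So while you correctly identified the $rs$-threaded move as the obstruction coming from the forbidden moves, your proposal as written omits the orientation case analysis for $RS3$/$VS3$ entirely, and that analysis is an essential, nontrivial part of the proof rather than a routine detail.
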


\begin{proof}
It is easy to see that singular $L_v$-equivalent virtual singular braids have isotopic closures. 

We will now work on the converse. First, we need to show that different choices made in the braiding process result in braids that are singular $L_v$-equivalent. The choices made during the braiding process are the subdivision points and the order of the braiding moves. The subdivision points are needed for marking the braiding boxes and the up-arcs. Using a similar argument as in \cite[Corollary 2]{KL2}, it is not hard to see that given two subdivisions of a virtual singular diagram, the resulting virtual singular braids obtained by our braiding algorithm are singular $L_v$-equivalent.
Due to the narrow condition for the braiding boxes, the braidings of the crossings are local and independent, so the order in which we braid the crossings has no effect on the final output. Moreover, the order in which we braid the free up-arcs is also irrelevant. Due to the braid detour moves, we can in fact braid first the free up-arcs (or just some of them) and then braid the crossings (and any remaining free up-arcs). 

Second, we need to show that, different choices in bringing a virtual singular diagram to general position result in braids (obtained by our braiding algorithm) that are singular $L_v$-equivalent.
Using a similar argument as in~\cite[Lemma 7]{KL2}, it is easily seen that planar isotopy moves applied away from any of the crossings in a virtual singular link diagram result in braids that are related by braid isotopy and the basic $L_v$-move. Indeed, the addition of singular crossings in the setting does not change the situation. It was also shown in~\cite[Lemma 7]{KL2} that, by applying the braiding algorithm to diagrams that differ by a swing move involving a virtual crossing or a real crossing results in braids that are $L$-equivalent. Therefore, for our case of virtual singular link diagrams, it remains to verify the swing moves containing a singular crossing. These swing moves can be verified in the same manner as the swing moves involving a real crossing, by merely replacing the real crossing in~\cite[Figures 26, 27]{KL2}) with a singular crossing. 

Finally, we need to show that two virtual singular braids with isotopic closures are related by singular $L_v$-equivalence. For that, we need to prove that virtual singular link diagrams (in general position) that differ by the extended virtual Reidemeister moves (recall Figure~\ref{fig:isotopies}) correspond to closures of virtual singular braids that are singular $L_v$-equivalent. By~\cite[Theorem 2]{KL2}, we know that the isotopy moves involving only real and virtual crossings follow from the $L$-equivalence for virtual braids (and thus from singular $L_v$-equivalence). Therefore, we only need to consider the extended virtual Reidemeister moves involving singular crossings, and these moves need to be considered with any given orientation of the strands. 

Note that if all strands involved are oriented downward, the statement follows directly from the relations defined on ${VSB}_n$. We consider all cases of each isotopy move involving singular crossings. We consider diagrams that are identical, except in a small region where they differ as shown in the figures; that is, the isotopy move is applied in that small region.

We start with the move $RS1$ and allow one strand to be oriented upward. If we apply the braiding algorithm to the diagrams on both sides of the move (followed by braiding isotopy), the resulting diagrams differ by a left $rs$-threaded $L_v$-move, as explained in Figure~\ref{RS1-case1}. 
Note that if we reverse the orientations of the two strands in the move, the corresponding braids differ by a right $rs$-threaded $L_v$-move, as explained in Figure~\ref{RS1-case2} (besides reversing the orientations of the two strands, we also changed the sign of the classical crossings from positive to negative, for more variety).
\begin{figure}[ht]
\[   \raisebox{-20pt}{\includegraphics[height=.65 in]{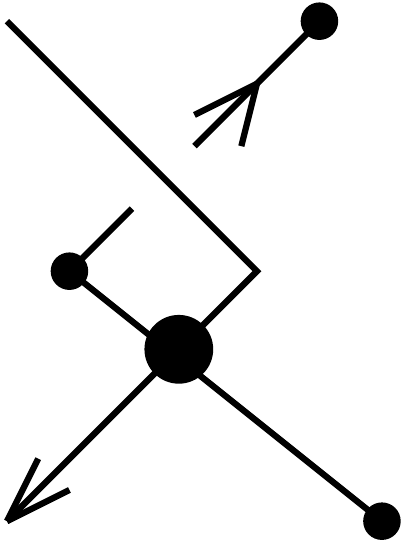}}\,\,\displaystyle \mathop{\longrightarrow}_{\text{braid isotopy}}^{\text{braiding}} \,\,\raisebox{-60pt}{\includegraphics[height=1.8 in]{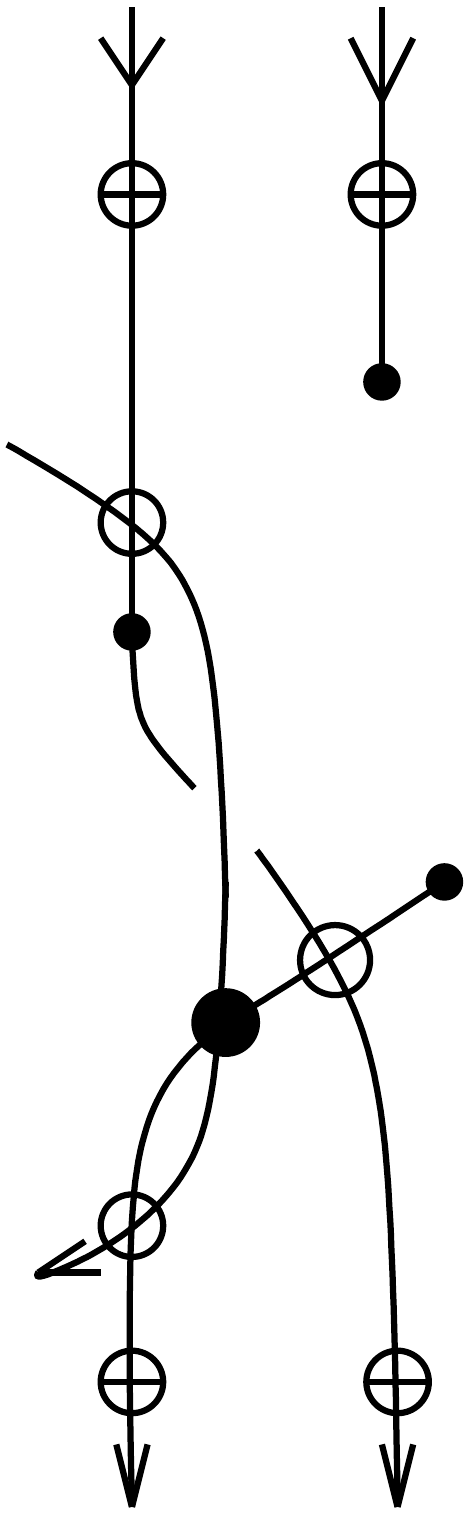}} \stackrel{rs-\text{threaded}}{\stackrel{L_v-\text{move}}{\longleftrightarrow}} \raisebox{-60pt}{\includegraphics[height=1.8 in]{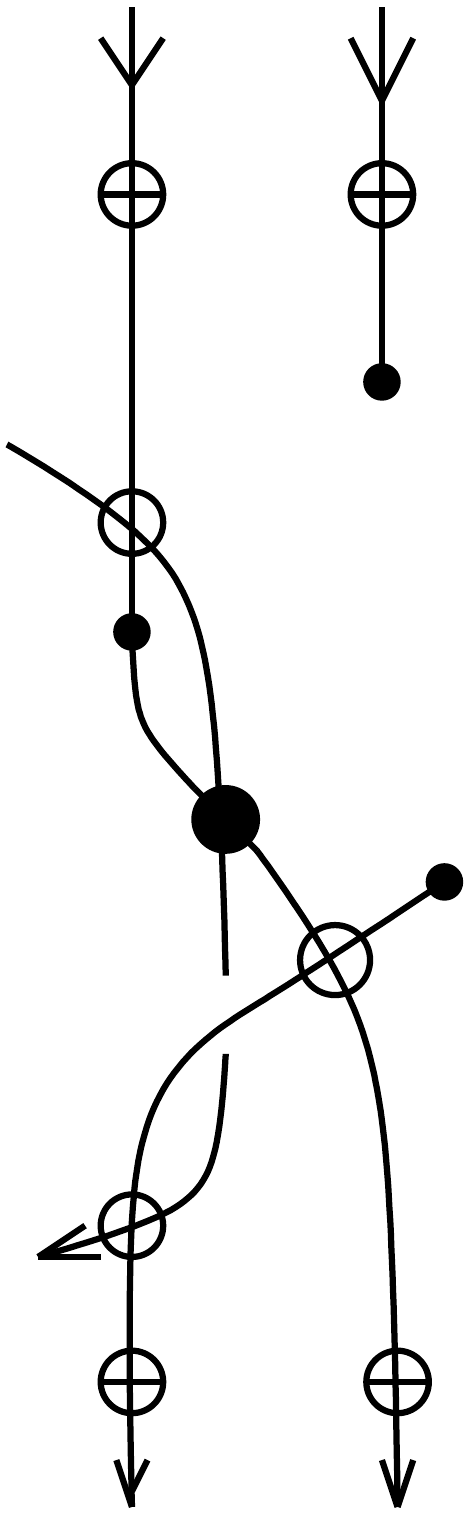}}\,\,\displaystyle \mathop{\longleftarrow}_{\text{braid isotopy}}^{\text{braiding}} \,\,\raisebox{-24pt}{\includegraphics[height=.65 in]{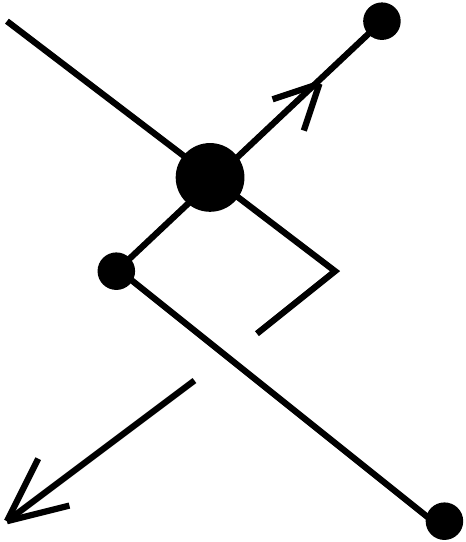}}\,\ \]
\caption{RS1 move--case 1}\label{RS1-case1}
\end{figure}
%%%%%%%%%%%%%%%

\begin{figure}[ht]
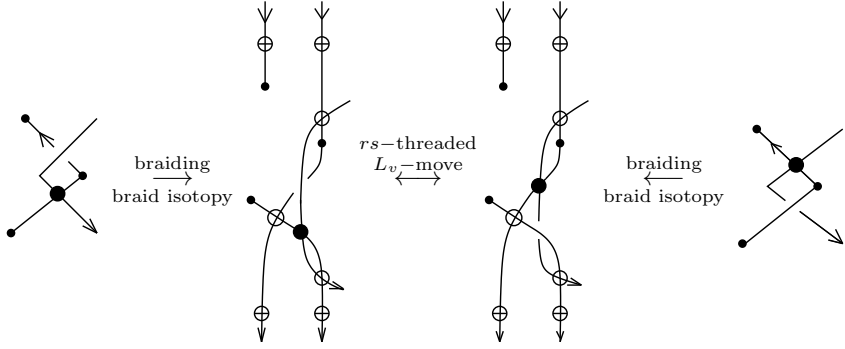

\[   \reflectbox{\raisebox{-20pt}{\includegraphics[height=.65 in]{ba1}}}\,\,\displaystyle \mathop{\longrightarrow}_{\text{braid isotopy}}^{\text{braiding}} \,\, \reflectbox{\raisebox{-60pt}{\includegraphics[height=1.8 in]{ba2}}} \stackrel{rs-\text{threaded}}{\stackrel{L_v-\text{move}}{\longleftrightarrow}} \reflectbox{\raisebox{-60pt}{\includegraphics[height=1.8 in]{ba3}}} \,\,\displaystyle \mathop{\longleftarrow}_{\text{braid isotopy}}^{\text{braiding}} \,\, \reflectbox{\raisebox{-24pt}{\includegraphics[height=.65 in]{ba4}}}\,\ \]
\caption{RS1 move--case 2}\label{RS1-case2}
\end{figure}

%%%%%%%%%%%%%%%%

Figure~\ref{RS1-case3} shows that if we take the isotopy move $RS1$ with both strands  oriented upward and braid the diagrams on each side of the move, we find that the two corresponding braids are related by a series of conjugations and the braid-type $RS1$ move.

\begin{figure}[ht]
\[   \raisebox{-20pt}{\includegraphics[height=.6in]{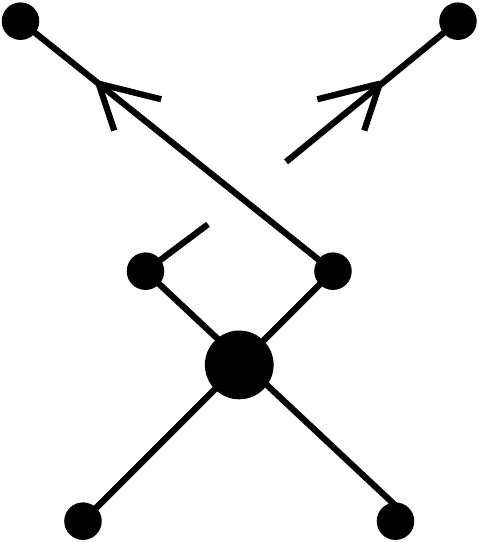}}\,\,\stackrel{\text{braiding}}{\longrightarrow} \,\, \raisebox{-70pt}{\includegraphics[height=1.9in]{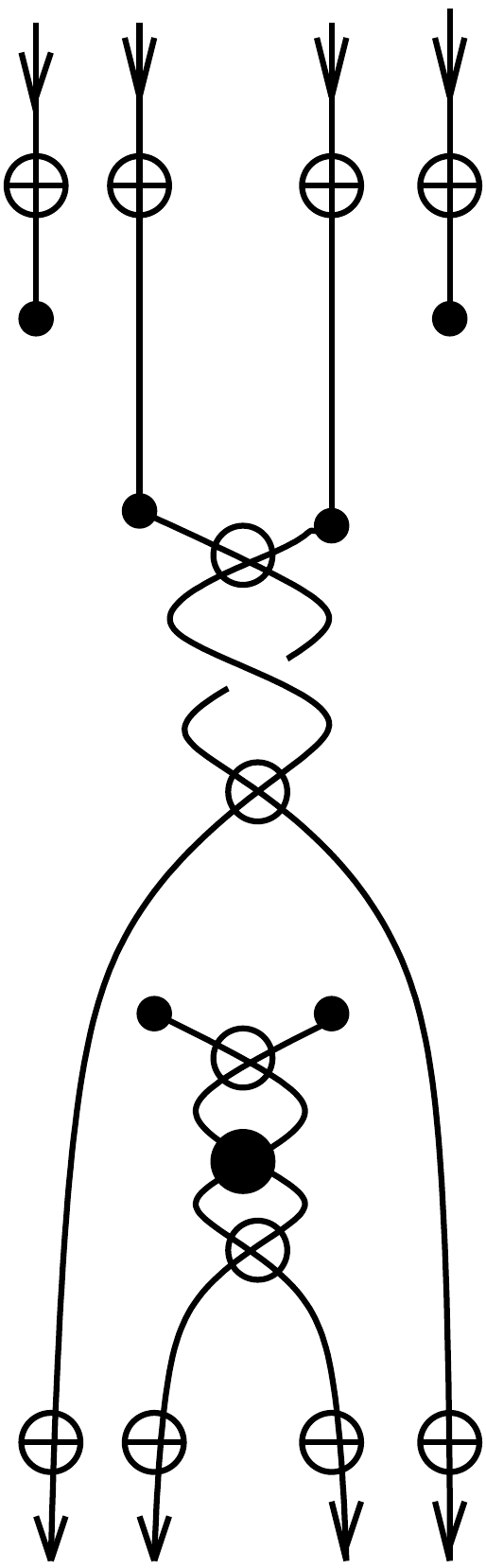}}\,\,\stackrel{\text{virtual}}{\stackrel{\text{conjugation}}{\longleftrightarrow}} \,\, \raisebox{-70pt}{\includegraphics[height=1.9 in]{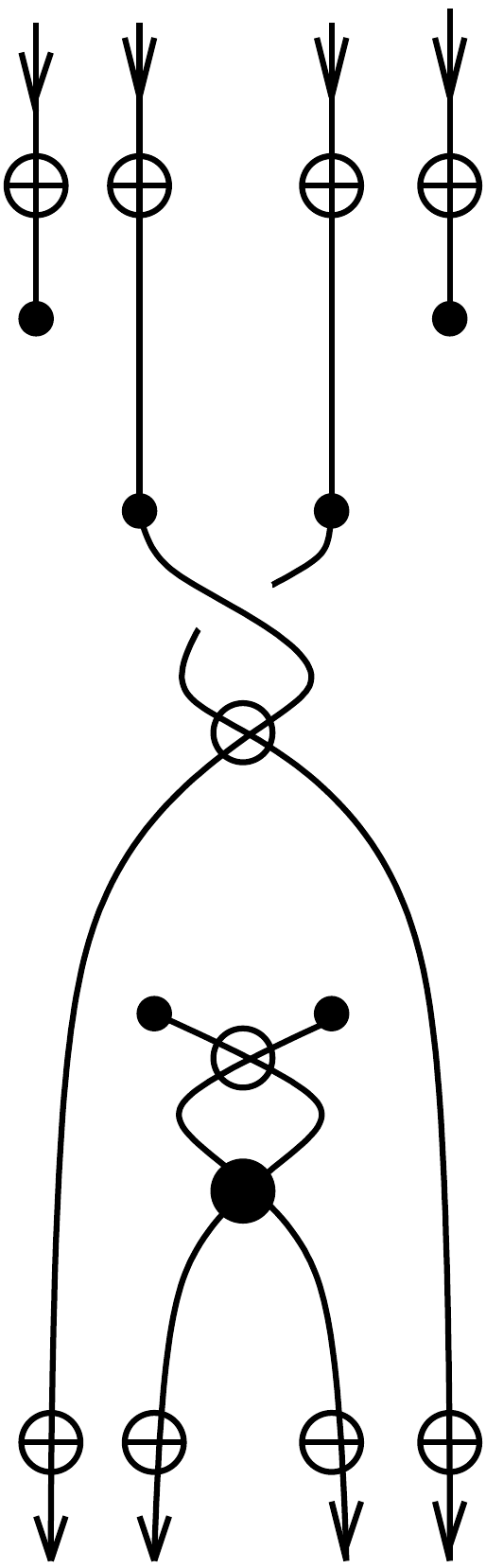}}\,\,\stackrel{\text{singular}}{\stackrel{\text{commuting}}{\longleftrightarrow}}\,\,\raisebox{-70pt}{\includegraphics[height=1.9in]{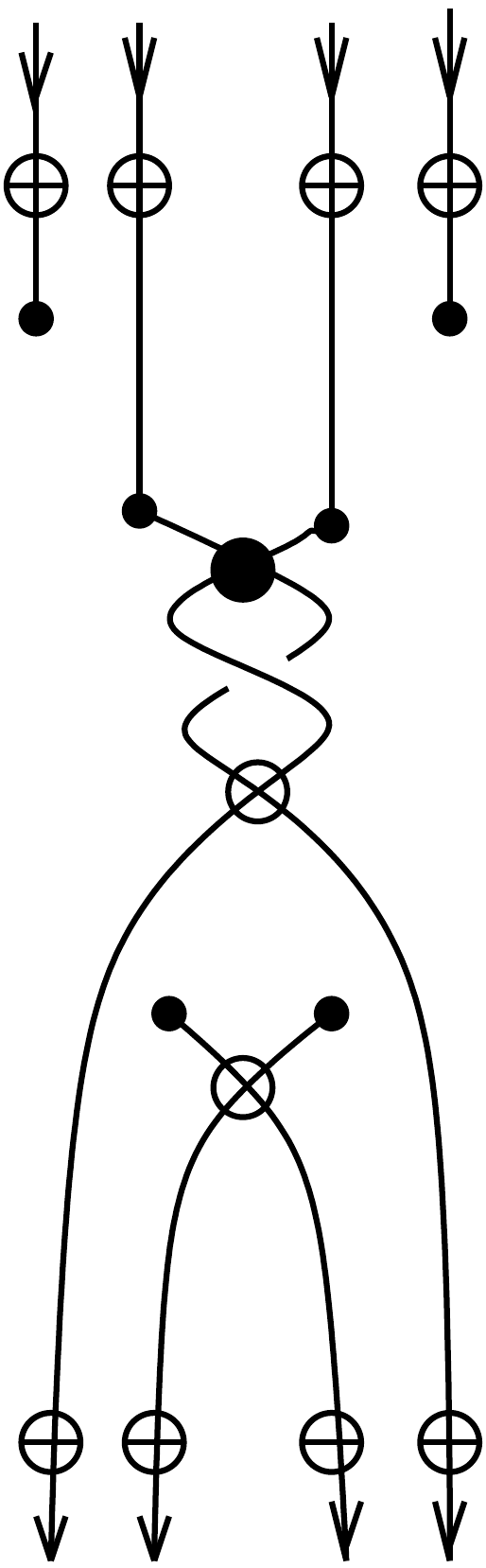}}\]\[\stackrel{\text{braid}}{\stackrel{\text{isotopy}}{\longleftrightarrow}}\,\,\raisebox{-70pt}{\includegraphics[height=1.9in]{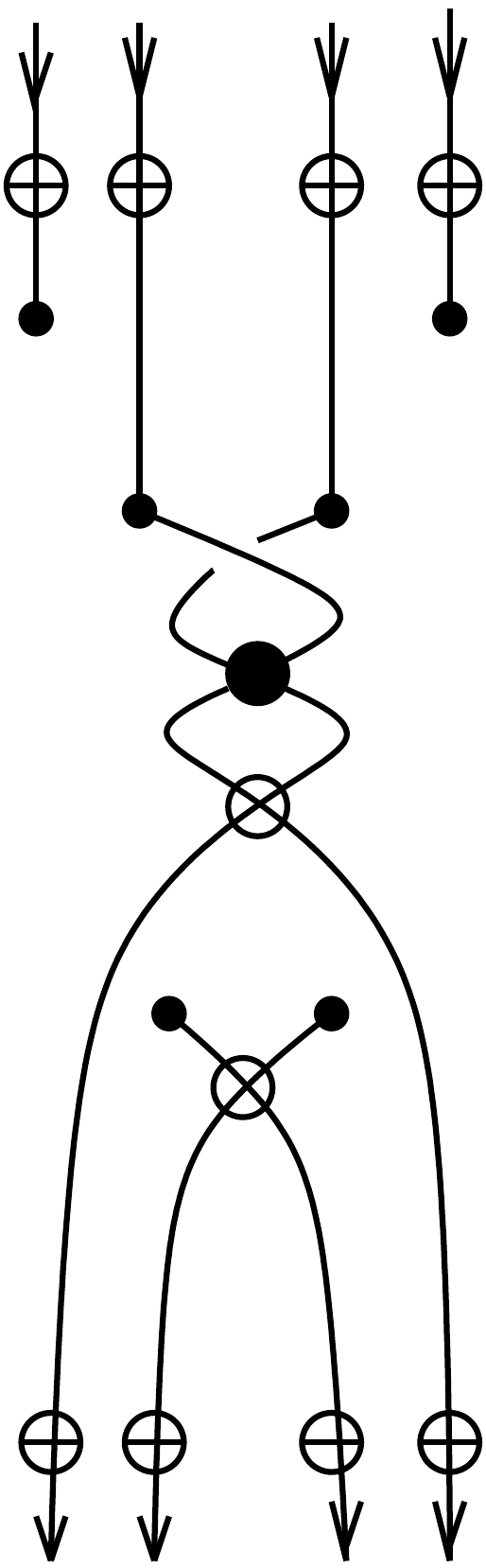}}\,\,\stackrel{\text{real}}{\stackrel{\text{conjugation}}{\longleftrightarrow}} \,\,\raisebox{-70pt}{\includegraphics[height=1.9in]{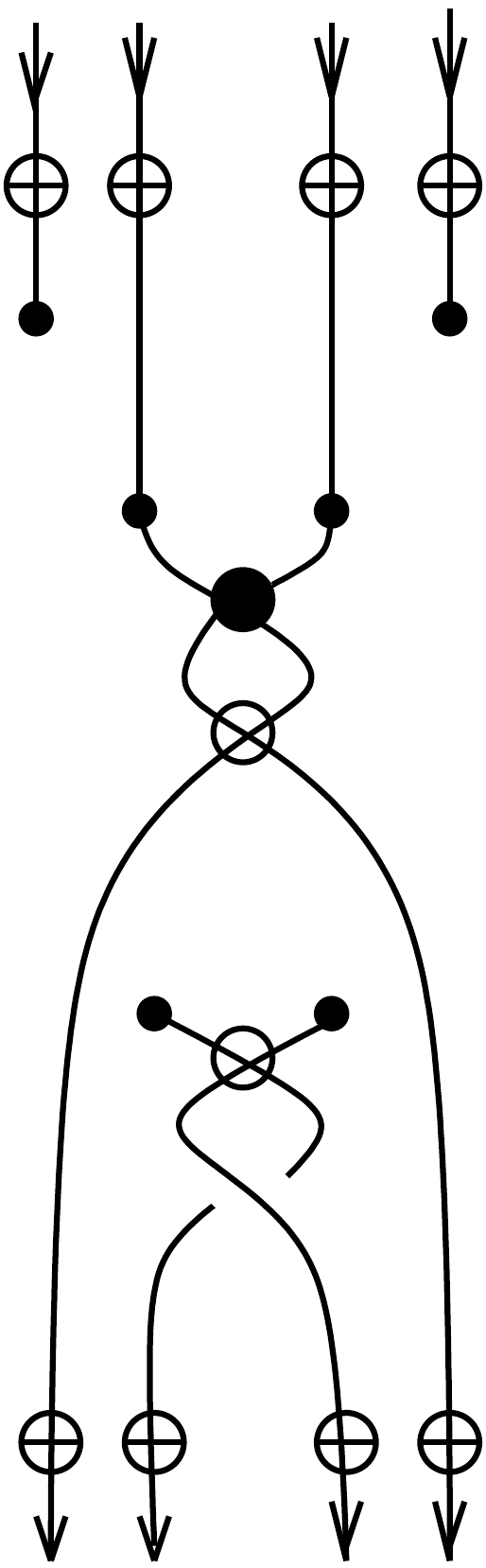}}\,\, \stackrel{\text{virtual}}{\stackrel{\text{conjugation}}{\longleftrightarrow}} \,\, \raisebox{-70pt}{\includegraphics[height=1.9 in]{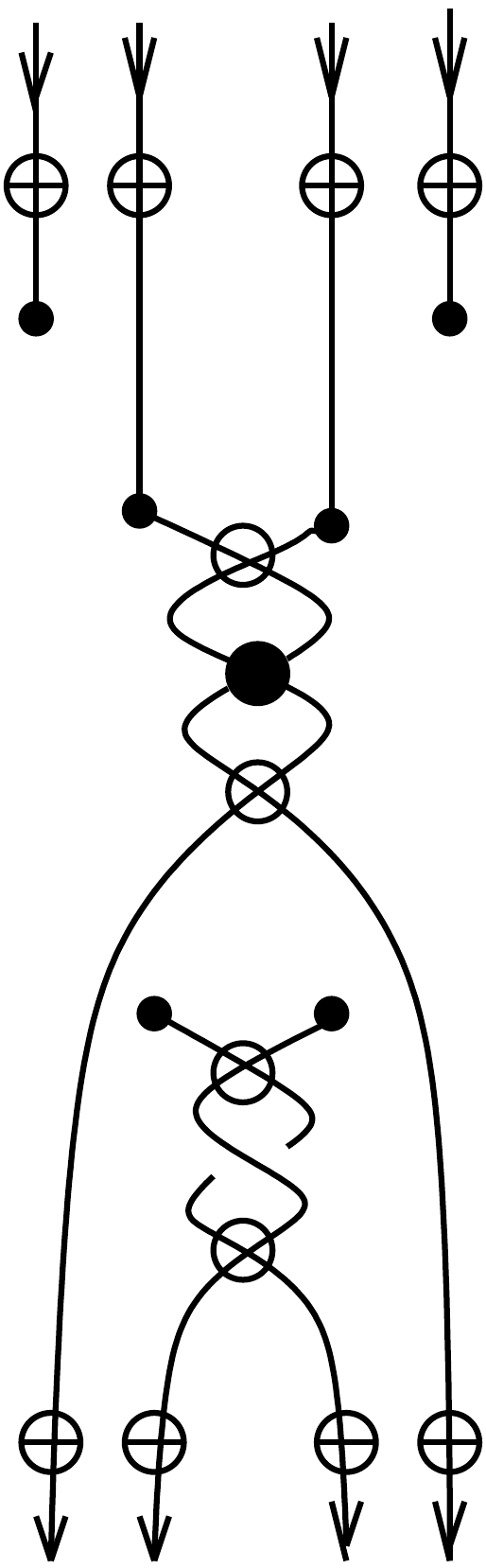}}\,\,\stackrel{\text{braiding}}{\longleftarrow} \raisebox{-20pt}{\includegraphics[height=.6in]{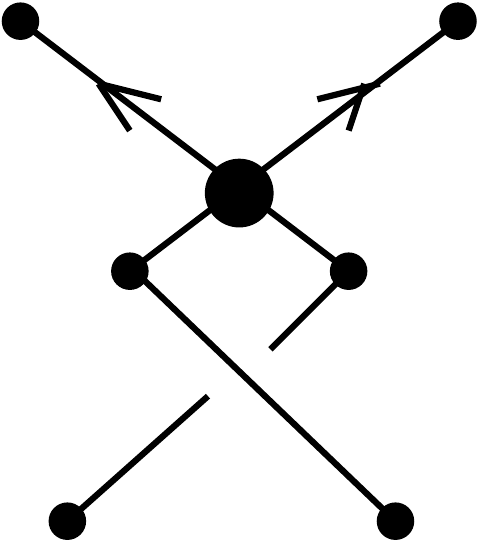}}\]
\caption{RS1 move--case 3}\label{RS1-case3}
\end{figure}

Observe that in Figure~\ref{RS1-case2} we used virtual conjugation, which is not a move of the singular $L_v$-equivalence. However, recall that the virtual conjugation follows from the $L$-equivalence and hence from the singular $L_v$-equivalence (see~\cite[Figures 17, 18]{KL2}).

We will now take a slightly different approach to prove that the moves $RS3$ and $VR3$ hold with any possible orientations on the strands. Again, the case where all strands are oriented downward follow from braid equivalence. We start by considering the $RS3$ move with one strand oriented upward and apply an $R2$ move to create locally three downward oriented strands. After a couple of $RS1$ moves, we apply an $RS3$ move in braid form (see Figure~\ref{RS3-case1}). 

\begin{figure}[ht]
\[   \raisebox{-20pt}{\includegraphics[height=.6in]{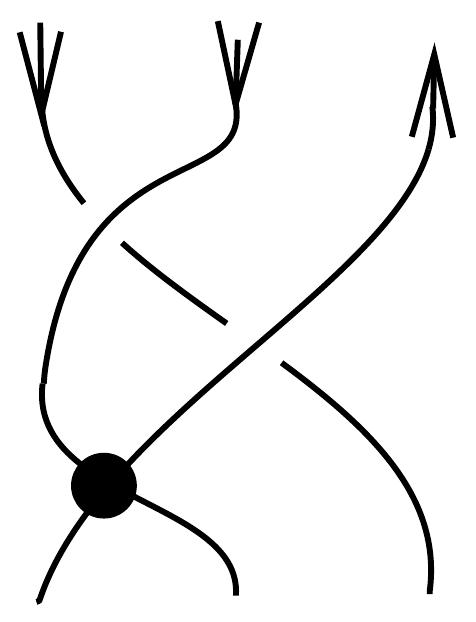}}\,\,\stackrel{R2}{\longleftrightarrow} \,\,\raisebox{-20pt}{\includegraphics[height=.6in]{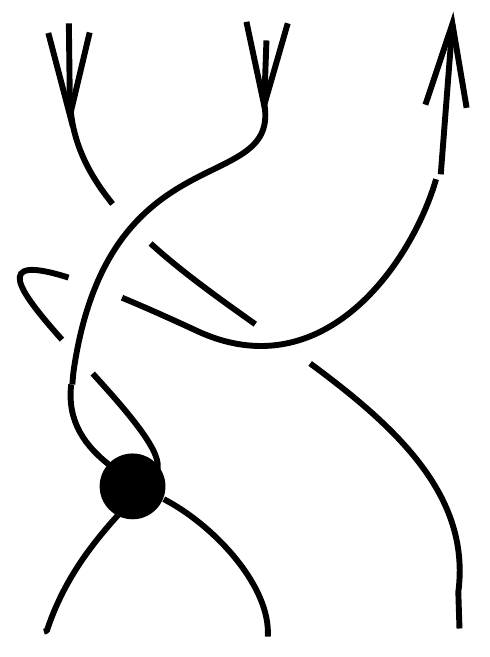}}\,\, \stackrel{RS1}{\longrightarrow} \,\, \raisebox{-20pt}{\includegraphics[height=.6in]{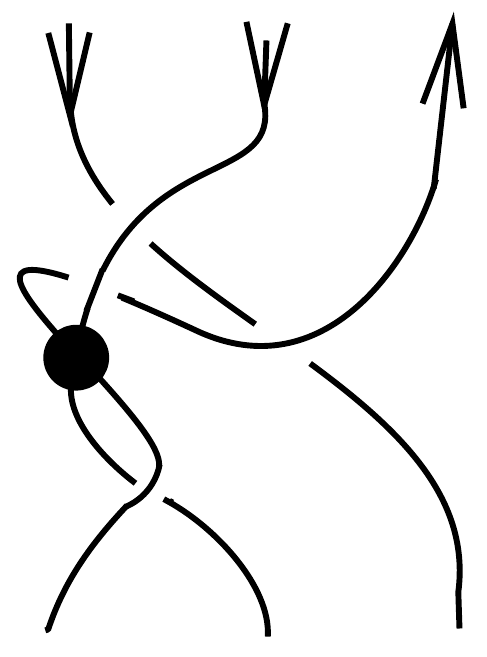}}\,\,\stackrel{RS1}{\longleftrightarrow}\,\,\raisebox{-20pt}{\includegraphics[height=.6in]{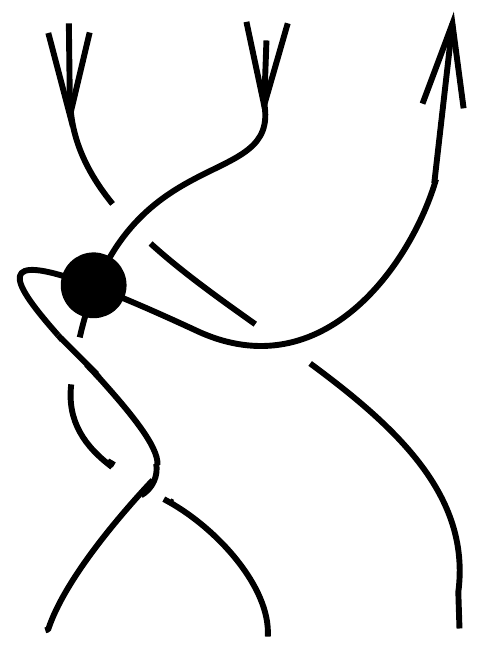}}\]\[\stackrel{R2}{\longleftrightarrow}\,\,\raisebox{-20pt}{\includegraphics[height=.6in]{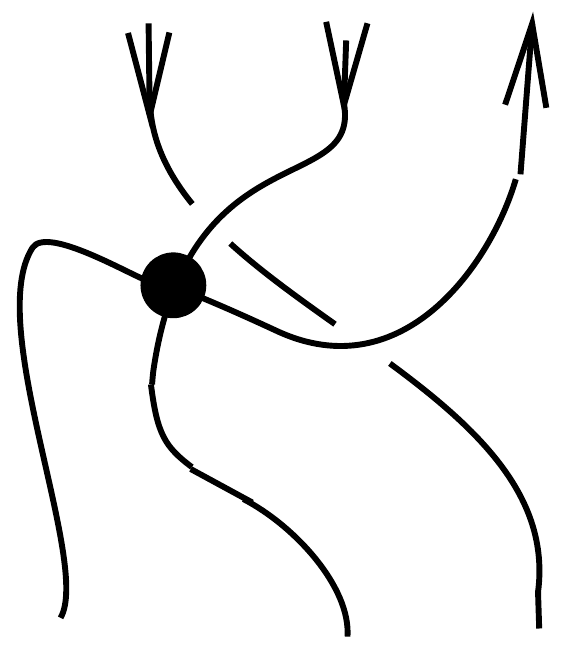}}\,\, \stackrel{\text{braid}}{\stackrel{RS3}{\longleftrightarrow}} \,\,\raisebox{-20pt}{\includegraphics[height=.6in]{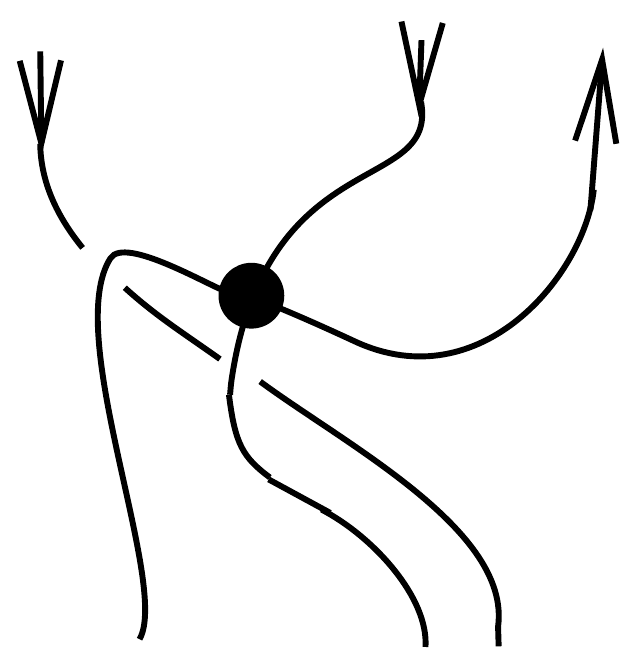}}\,\,\stackrel{\text{swing}}{\stackrel{\text{move}} {\longleftrightarrow}} \,\, \raisebox{-20pt}{\includegraphics[height=.6in]{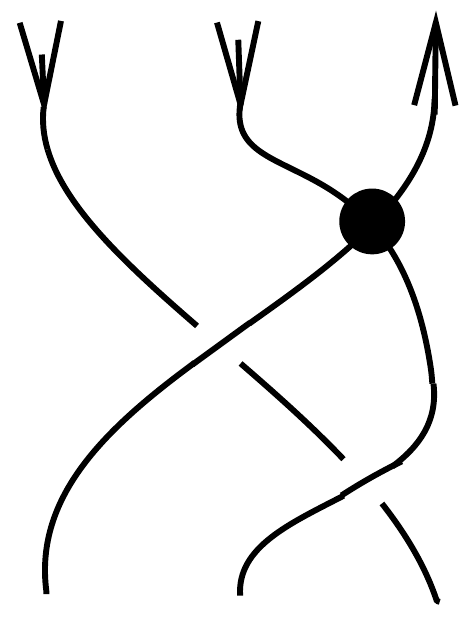}}\]
\caption{RS3 move--case 1}\label{RS3-case1}
\end{figure}

Similarly, if we start with two strands oriented upward, we apply an $R2$ move to reduce to the case with one strand oriented upward, as exemplified in Figure~\ref{RS3-case2}. 

\begin{figure}[ht]
\[   \raisebox{-20pt}{\includegraphics[height=.6in]{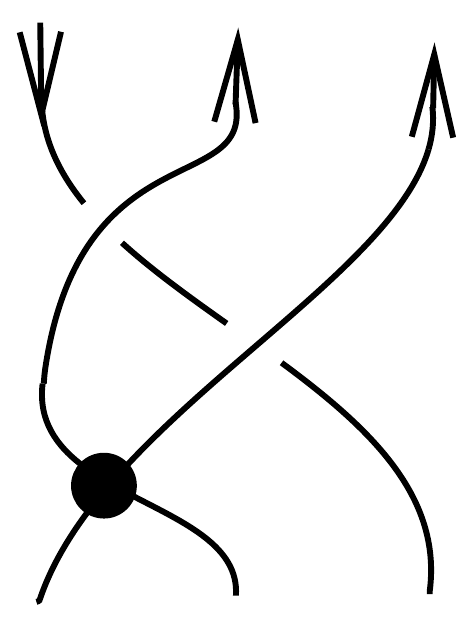}}\,\,\stackrel{\text{R2}}{\longleftrightarrow} \,\,\raisebox{-20pt}{\includegraphics[height=.6in]{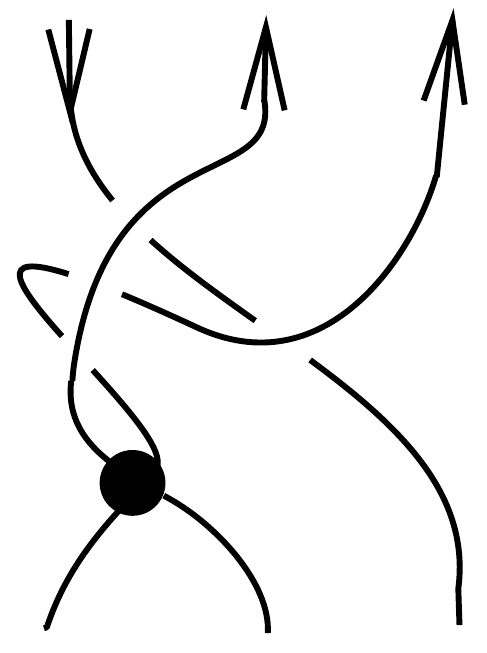}}\,\, \stackrel{\text{RS1}}{\longleftrightarrow} \,\, \raisebox{-20pt}{\includegraphics[height=.6in]{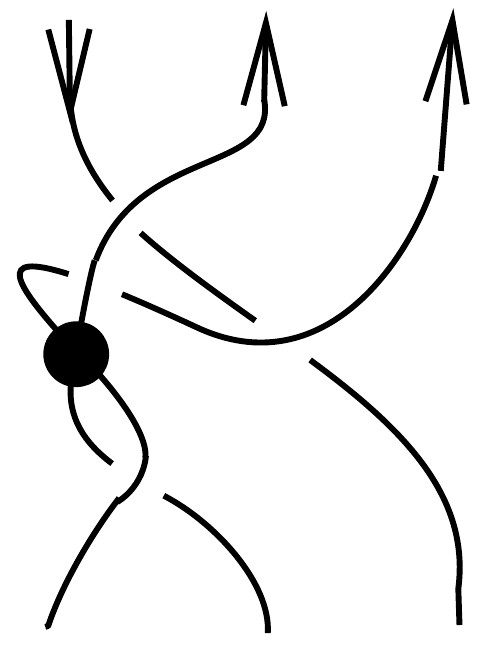}}\,\,\stackrel{\text{RS1}}{\longleftrightarrow}\,\,\raisebox{-20pt}{\includegraphics[height=.6in]{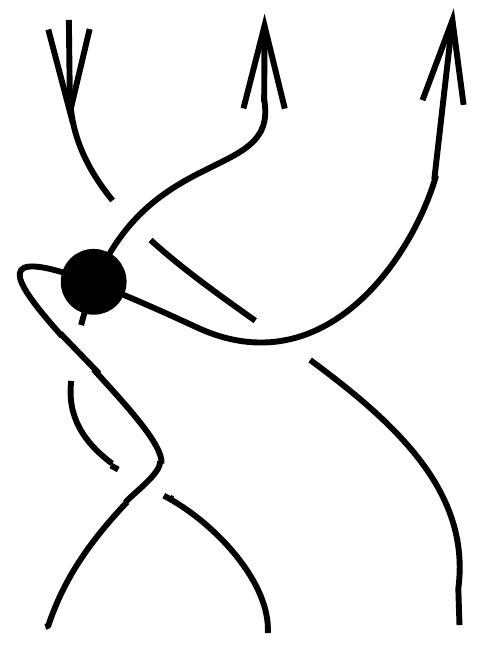}}\]\[\stackrel{\text{R2}}{\longleftrightarrow}\,\,\raisebox{-20pt}{\includegraphics[height=.6in]{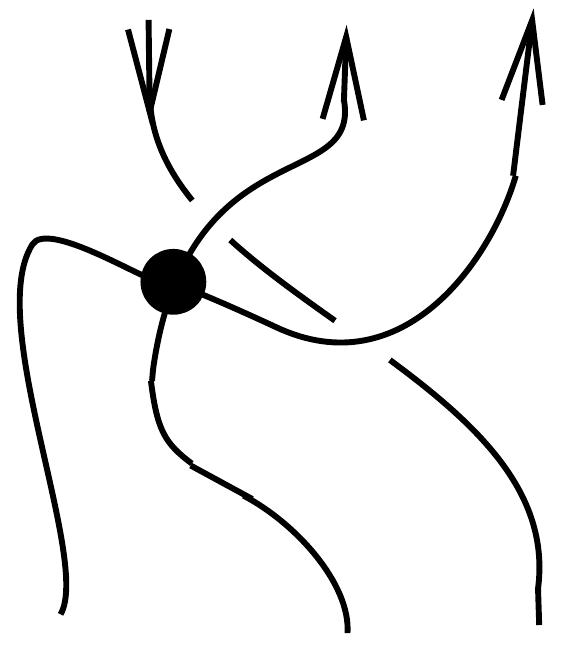}}\,\,\stackrel{\text{RS3 - case 1}}{\longleftrightarrow} \,\,\raisebox{-20pt}{\includegraphics[height=.6in]{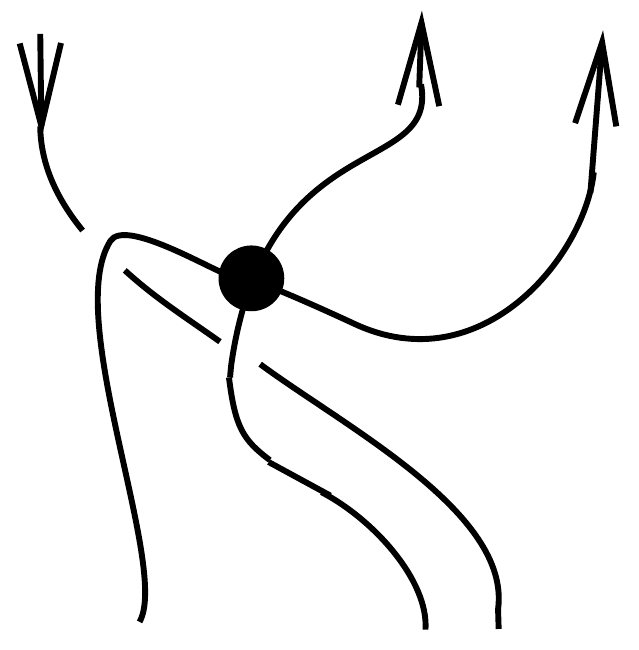}}\,\, \stackrel{\text{swing}}{\stackrel{\text{move}}{\longleftrightarrow}} \,\, \raisebox{-20pt}{\includegraphics[height=.6in]{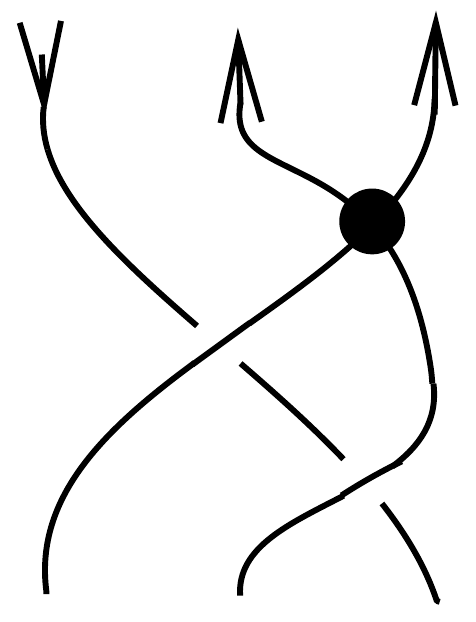}}\]
\caption{RS3 move--case 2}\label{RS3-case2}
\end{figure}

In Figure~\ref{RS3-case3} we consider an $RS3$ move with all three strands oriented upward and show that it can be reduced to the previous case with two strands oriented upward. 

\begin{figure}[ht]
\[   \raisebox{-20pt}{\includegraphics[height=.6in]{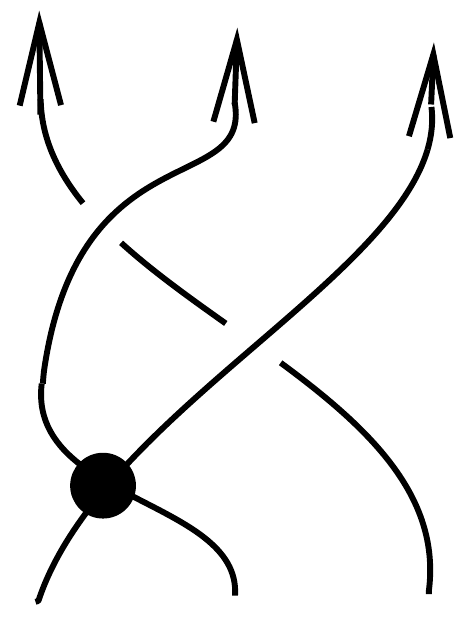}}\,\,\stackrel{\text{R2}}{\longleftrightarrow} \,\,\raisebox{-20pt}{\includegraphics[height=.6in]{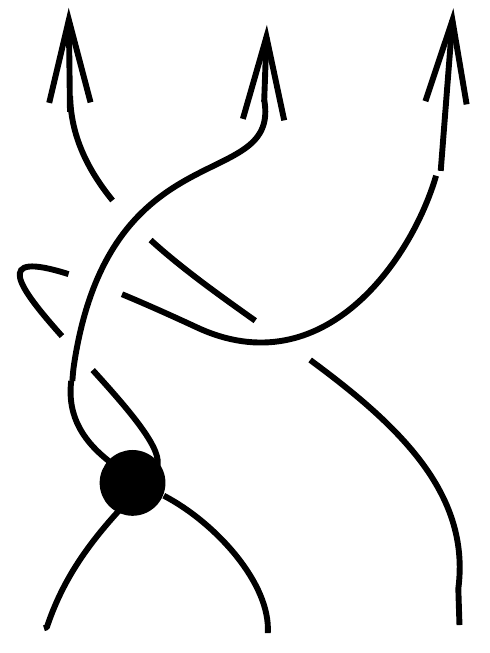}}\,\, \stackrel{\text{RS1}}{\longleftrightarrow} \,\, \raisebox{-20pt}{\includegraphics[height=.6in]{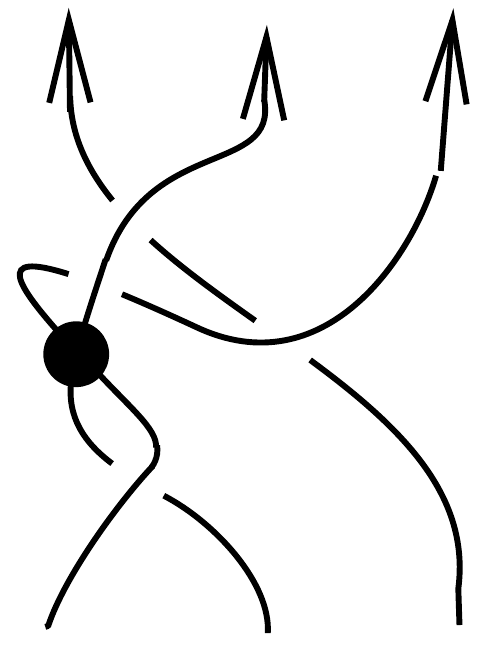}}\,\,\stackrel{\text{RS1}}{\longleftrightarrow}\,\,\raisebox{-20pt}{\includegraphics[height=.6in]{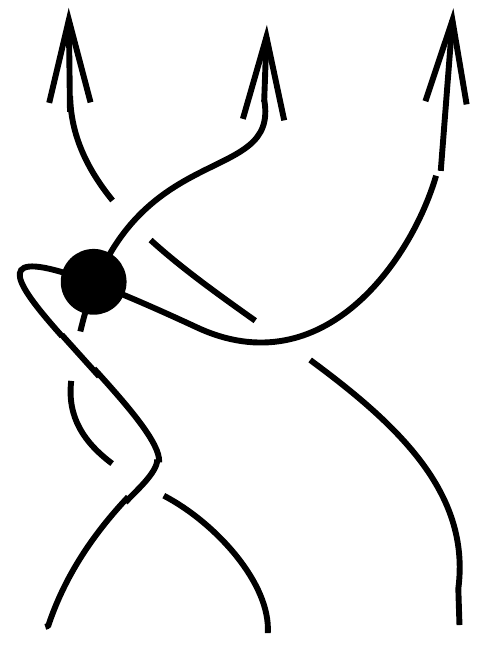}}\]\[\stackrel{\text{R2}}{\longleftrightarrow}\,\,\raisebox{-20pt}{\includegraphics[height=.6in]{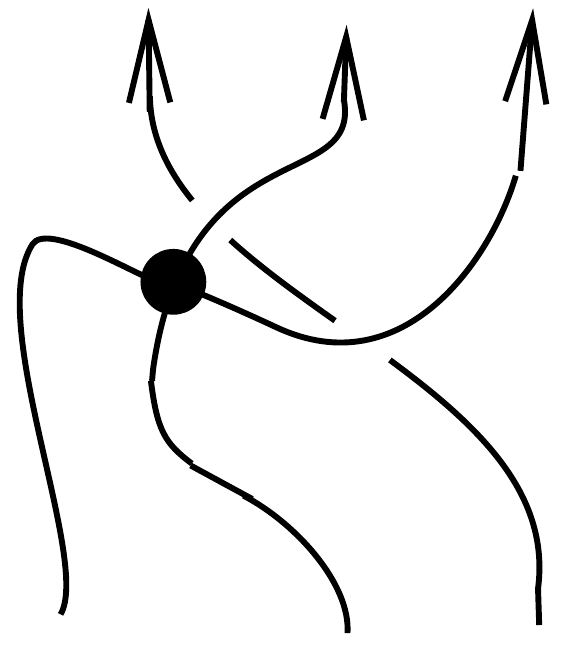}}\,\,\stackrel{\text{RS3 - case 2}}{\longleftrightarrow} \,\,\raisebox{-20pt}{\includegraphics[height=.6in]{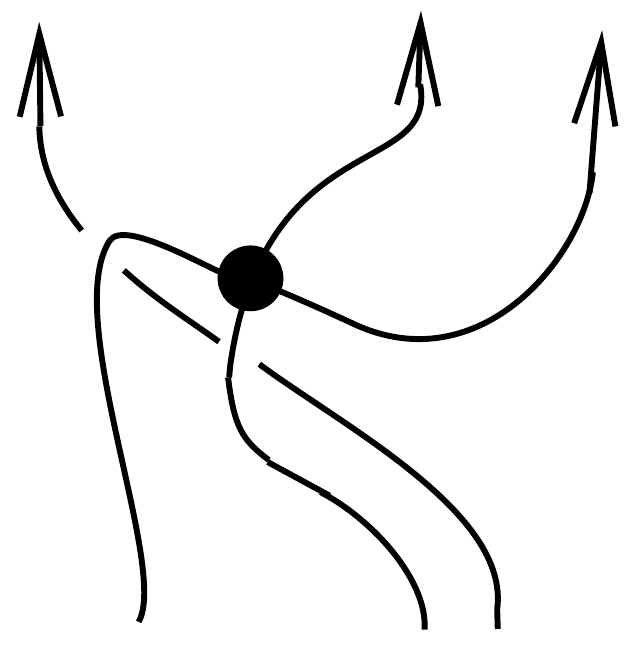}}\,\, \stackrel{\text{swing}}{\stackrel{\text{move}}{\longleftrightarrow}} \,\, \raisebox{-20pt}{\includegraphics[height=.6in]{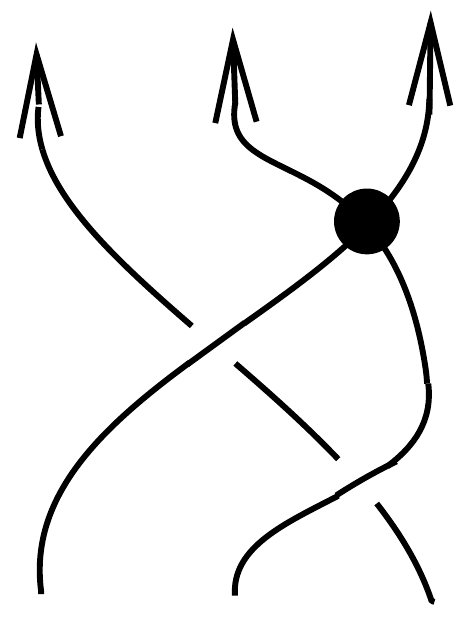}}\]
\caption{RS3 move--case 3}\label{RS3-case3}
\end{figure}

The proof for the $VR3$ moves with various orientations on the strands is done similarly as for the $RS3$ moves, and therefore they are omitted to avoid repetition. This completes the proof.
\end{proof}

In the following theorem we will use $\omega$ to represent an arbitrary virtual singular braid in ${VSB}_n$. We also regard $\omega$ as an element of ${VSB}_{n+1}$ by adding a strand on the right of $\omega$. (We will not use an extra notation when we regard $\omega \in {VSB}_n$ as an element in ${VSB}_{n+1}$.)  Using this operation (of adding a single identity strand on the right of a braid) the monoid  ${VSB}_n$ embeds in ${VSB}_{n+1}$, and we define $VSB_{\infty} := \cup_{n=1}^\infty VSB_n$. In what follows, we also allow adding an identity strand at the left of $\omega \in {VSB}_n$ and we denote by $i(\omega)$ the braid in ${VSB}_{n+1}$ obtained in this way.

%%%%%%%%%%%%%%%%%%%%%%%%%%%%%%%%%%%%%%%%%%%%%%%%%%%
%Algebraic Markov-type theorem
\begin{theorem}[\textbf{Algebraic Markov-type theorem for virtual singular braids}]\label{Markov alg}
Two virtual singular braids have isotopic closures if and only if they differ by a finite sequence of braid relations in ${VSB}_{\infty}$ together with the following moves or their inverses:
\begin{enumerate}
\item[(i)] Real and virtual conjugation, and singular commuting  (see Figure~\ref{fig:conj}):		
\[\sigma_i\omega\sim\omega\sigma_i, \,\,\, \tau_i\omega\sim\omega\tau_i, \,\,\, v_i\omega\sim\omega v_i\]
\item[(ii)] Right real and right virtual stabilization (see Figure~\ref{fig:stab})	:			
\[\omega v_n\sim\omega\sim\omega\sigma_n^{\pm1}\]
\item[(iii)] Right and left algebraic under-threading (see Figure~\ref{fig:thread}):		
\[\omega\sim\omega\sigma_n^{-1}v_{n-1}\sigma_n, \,\,\, \omega\sim i(\omega)\sigma_1v_2\sigma_1^{-1}\]
\item[(iv)] Right and left algebraic $rs$-threading (see Figure~\ref{fig:rvthread}):
\[\omega\tau_n v_{n-1}\sigma_n^{\pm1}\sim\omega\sigma_n^{\pm1}v_{n-1}\tau_n, \,\,\, i (\omega)\tau_1v_{2}\sigma_1^{\pm1}\sim i(\omega)\sigma_1^{\pm1} v_{2}\tau_1\]
\end{enumerate}
where $\omega, v_i,\sigma_i^{\pm1},\tau_ i \in {VSB}_{n}$ and $v_n,\sigma_n^{\pm1},\tau_n \in {VSB}_{n+1}$.
\end{theorem}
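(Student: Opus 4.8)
The plan is to derive the algebraic Markov-type theorem from the already-established $L$-move Markov-type theorem (Theorem~\ref{Markov L-moves}) by showing that singular $L_v$-equivalence and the algebraic equivalence generated by the moves (i)--(iv) coincide on $VSB_\infty$. Since both equivalences certify isotopic closures, it suffices to prove that each geometric $L_v$-move translates, at the level of braid words, into a finite sequence of the algebraic moves, and conversely. I would therefore structure the proof as two implications, with the bulk of the work being the forward direction (geometric $\Rightarrow$ algebraic).

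For the forward direction, I would take each generating move of the singular $L_v$-equivalence and rewrite it algebraically. Real conjugation and singular commuting are verbatim the first clause of (i); the virtual conjugation, although not listed among the singular $L_v$-moves, is recalled in the excerpt to follow from the $L$-equivalence, so I would either invoke that or note it is exactly the third relation of (i). The key translation step is to express a right $vL_v$-move and a right $rL_v$-move applied to the rightmost strand as the stabilization moves $\omega v_n \sim \omega$ and $\omega \sigma_n^{\pm1}\sim\omega$ of (ii): here one must check that a basic $L_v$-move performed on the last strand inserts precisely a $v_n$ (resp.\ $\sigma_n^{\pm1}$) and that the extra strand created is the new $(n+1)$st strand, matching the embedding $VSB_n\hookrightarrow VSB_{n+1}$ described before the statement. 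The under-threaded $L_v$-moves should unwind, via the forbidden-move obstruction that prevents braid-level simplification, into the words $\omega\sigma_n^{-1}v_{n-1}\sigma_n$ and $i(\omega)\sigma_1 v_2\sigma_1^{-1}$ of (iii), and the $rs$-threaded $L_v$-moves into the words of (iv). The cleanest way to carry this out is to read off the braid word directly from the diagrams in Figures~\ref{fig:threaded-move} and~\ref{fig:rs-threaded move}, accounting separately for the left and right versions and for both signs of the classical crossing.

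For the converse (algebraic $\Rightarrow$ geometric), I would check that each algebraic move in (i)--(iv) is realized by a singular $L_v$-move or a composition of such moves together with braid isotopy. Conjugation and singular commuting are immediate. For stabilization, closing up $\omega v_n$ and $\omega\sigma_n^{\pm1}$ and applying a $V1$ or $R1$ move (exactly the contraction pictured in Figure~\ref{fig:close L_v moves}) returns the closure of $\omega$, so these are inverse basic/real/virtual $L_v$-moves. The threading relations of (iii) and (iv) are, by construction of the threaded and $rs$-threaded moves, the algebraic shadows of those geometric moves, so this direction is essentially bookkeeping once the words have been matched in the forward direction.

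The main obstacle I expect is bookkeeping the indices and the left-versus-right asymmetry consistently. Because the embedding adds a strand on the right, a right-side move naturally lives on strands $n,n+1$ and uses generators $\sigma_n, v_{n-1}$, whereas a left-side move requires the conjugating operator $i(\cdot)$ and lives on strands $1,2$; getting the generator subscripts ($v_{n-1}$ versus $v_2$, $\sigma_n$ versus $\sigma_1$) and the placement of the threading strand to agree precisely with the displayed relations is the delicate point. A secondary subtlety is that the singular $L_v$-equivalence as stated includes only \emph{right} $rL_v$- and $vL_v$-moves but \emph{both} left and right threaded and $rs$-threaded moves; I must make sure that the left/right structure of (ii), (iii), (iv) reflects this asymmetry and that any left move I need but have not been given (e.g.\ a left stabilization) is recovered from the right version via conjugation, exactly as the excerpt notes the left $L_v$-moves follow from the $L$-equivalence. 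Once these index conventions are pinned down, each move is a short, essentially diagrammatic verification.
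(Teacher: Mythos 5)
Your overall logical frame is sound, and arguably stated more cleanly than in the paper: the hard direction is ``isotopic closures $\Rightarrow$ algebraic equivalence,'' which via Theorem~\ref{Markov L-moves} reduces to showing that every generating move of the singular $L_v$-equivalence can be rewritten as a finite sequence of braid relations in $VSB_{\infty}$ together with the moves (i)--(iv). This is the same reduction the paper performs (its Figures~\ref{fig:stabproof}, \ref{fig:threadingproof}, \ref{fig:rsproof} contain chains of braid detours and conjugations connecting the result of an $L_v$-move to the algebraic words), even though the paper phrases the inclusion in the opposite order; since every intermediate step in those chains is itself both a braid relation/conjugation and an $L_v$-equivalence, the chains can be read in the direction your framing requires.

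The genuine gap is that your translation only treats $L_v$-moves ``applied to the rightmost strand'' (and, via $i(\cdot)$, the leftmost one). By definition, an $L_v$-move---basic, real, virtual, under-threaded, or $rs$-threaded---may be applied by cutting \emph{any} strand at \emph{any} height, and the singular $L_v$-equivalence that Theorem~\ref{Markov L-moves} hands you is generated by moves at arbitrary positions. A move performed in the interior of $\beta$ does not insert a terminal letter $v_n$ or $\sigma_n^{\pm 1}$, nor a terminal word $\sigma_n^{-1}v_{n-1}\sigma_n$, into the braid word of $\beta$; to reach that normal form you must first exploit the defining feature of the $L_v$-moves---that the two newly created strands cross all other strands virtually---to detour that pair of strands to the far right of the braid (a sequence of braid relations in $VSB_{\infty}$, i.e.\ braid detour moves), and then cycle the portion of the braid separating the move from the bottom of the diagram around the closure by conjugation, so that the move lands in the standard position where it reads off as the algebraic word of (ii), (iii), or (iv). This detour-plus-conjugation reduction is precisely the content of the paper's Figures~\ref{fig:stabproof}--\ref{fig:rsproof} (note also the paper's remark that the virtual conjugation must be applied inside the sub-braid containing the threads crossing the new strands); without it your forward direction covers only a special case, and the inclusion of the $L_v$-equivalence in the algebraic equivalence is not established. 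The obstacle you flag---index bookkeeping and left/right asymmetry---is the routine part; the arbitrary-position reduction is the step your plan is missing. (A minor additional point: the ``forbidden-move obstruction'' explains why a threaded move cannot be cancelled at the braid level, but it is not a mechanism for deriving the words in (iii) and (iv); those words again come from the detouring argument.)
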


\begin{figure}[ht]
\[ \raisebox{-35pt}{\includegraphics[height=1in]{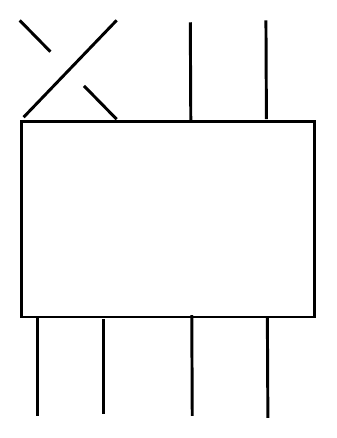}}\,\, \sim \,\, \raisebox{-35pt}{\includegraphics[height=1 in]{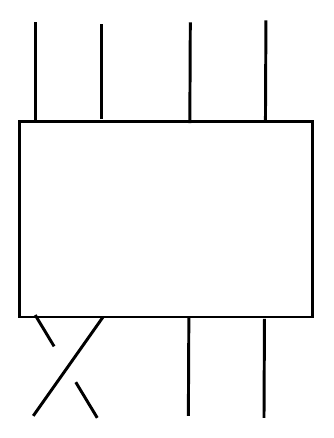}}\hspace{1cm}
 \raisebox{-35pt}{\includegraphics[height=1 in]{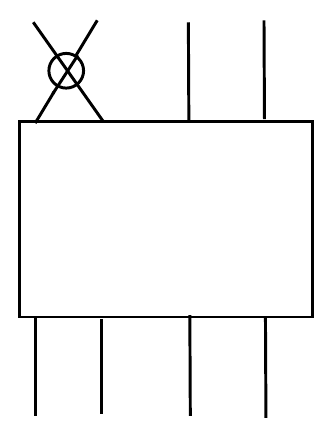}}\,\,\sim \,\, \raisebox{-35pt}{\includegraphics[height=1in]{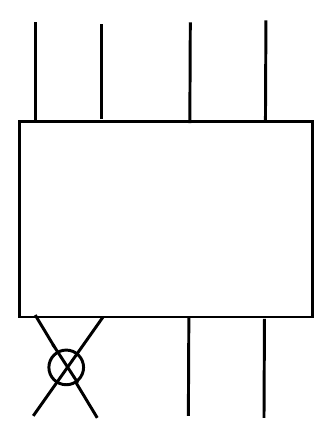}}
 \put(-265, 0){\fontsize{9}{9}$\omega$}
 \put(-190, 0){\fontsize{9}{9}$\omega$}
  \put(-105, 0){\fontsize{9}{9}$\omega$}
   \put(-30, 0){\fontsize{9}{9}$\omega$} \]
 \[
 \raisebox{-35pt}{\includegraphics[height=1in]{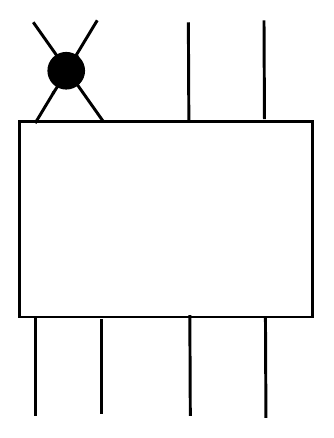}}\,\,\sim \,\, \raisebox{-35pt}{\includegraphics[height=1 in]{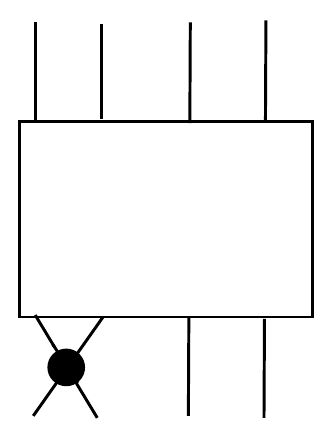}}
   \put(-105, 0){\fontsize{9}{9}$\omega$}
  \put(-30, 0){\fontsize{9}{9}$\omega$}
\]
\caption{Real and virtual conjugation, and singular commuting}\label{fig:conj} 
\end{figure}

\begin{figure}[ht]
\[ \raisebox{-35pt}{\includegraphics[height=1in]{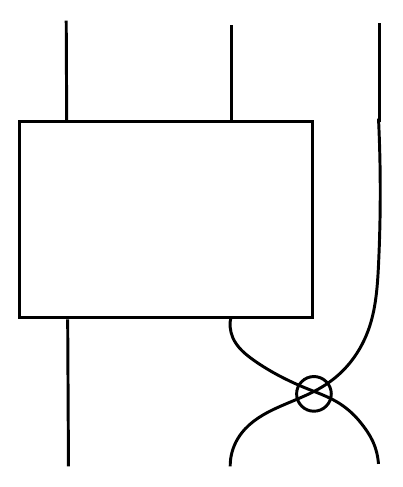}}\,\, \sim \,\, \raisebox{-35pt}{\includegraphics[height=1in]{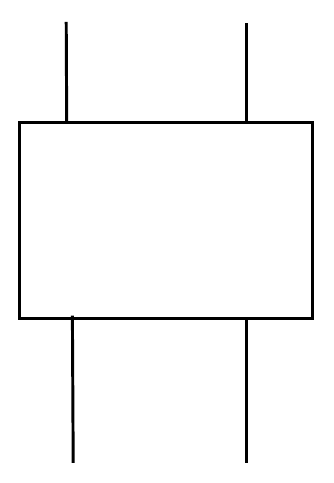}}\sim
 \raisebox{-35pt}{\includegraphics[height=1 in]{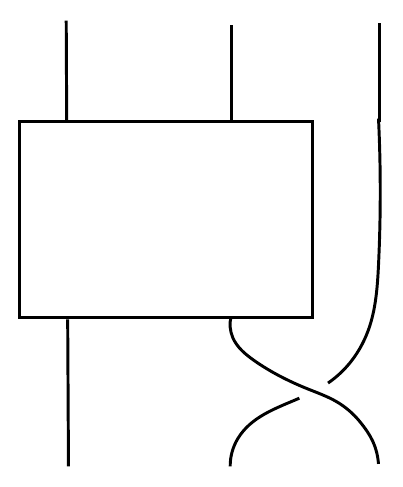}}
 \put(-180, 3){\fontsize{9}{9}$\omega$}
  \put(-100, 3){\fontsize{9}{9}$\omega$}
   \put(-38, 3){\fontsize{9}{9}$\omega$}\]
\caption{Right real and virtual stabilization}\label{fig:stab} 
\end{figure} 

\begin{figure}[ht]
\[ \raisebox{-35pt}{\includegraphics[height=1in]{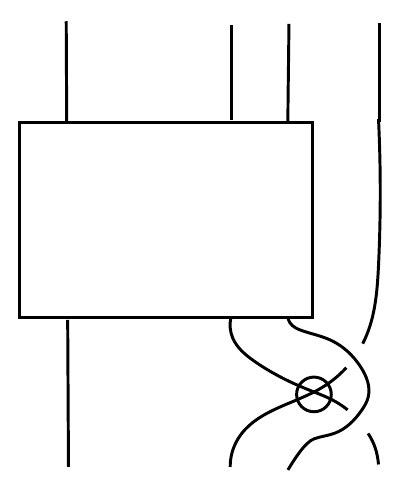}}\,\, \sim \,\, \raisebox{-35pt}{\includegraphics[height=1in]{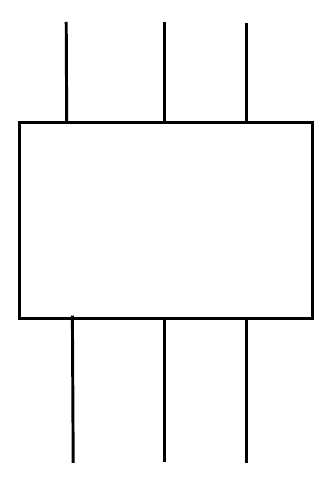}} \,\, \sim \,\, \reflectbox{\raisebox{-35pt}{\includegraphics[height=1in]{th1}}}
 \put(-187, 3){\fontsize{9}{9}$\omega$}
  \put(-107, 3){\fontsize{9}{9}$\omega$}
   \put(-27, 3){\fontsize{9}{9}$\omega$}\]
\caption{Right and left algebraic under-threading}\label{fig:thread} 
\end{figure} 

\begin{figure}[ht]
\[\raisebox{-35pt}{\includegraphics[height=1in]{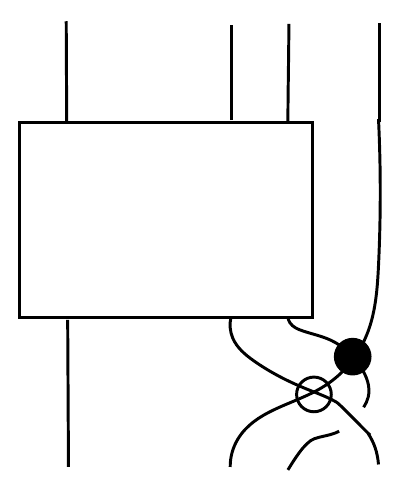}} \,\, \sim \,\, \raisebox{-35pt}{\includegraphics[height=1in]{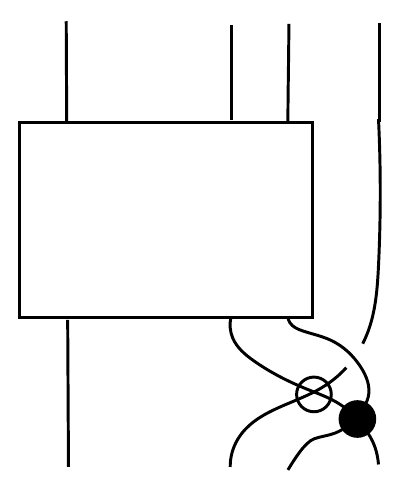}}
 \put(-117, 3){\fontsize{9}{9}$\omega$}
  \put(-37, 3){\fontsize{9}{9}$\omega$} \hspace{1cm}
\reflectbox{\raisebox{-35pt}{\includegraphics[height=1in]{rvth2}}}\,\, \sim \,\, \reflectbox{\raisebox{-35pt}{\includegraphics[height=1in]{rvth1}}}
 \put(-117, 3){\fontsize{9}{9}$\omega$}
  \put(-37, 3){\fontsize{9}{9}$\omega$}\]

\caption{Right and left algebraic $rs$-threading}\label{fig:rvthread} 
\end{figure} 

\begin{proof}
It is easily checked that the closures of two virtual singular braids that are related by virtual singular braid isotopy and a finite sequence of the moves listed in Theorem~\ref{Markov alg} represent isotopic virtual singular links.

For the converse, let $\beta_1$ and $\beta_2$ be virtual singular braids whose closures represent isotopic virtual singular links. By Theorem~\ref{Markov L-moves}, we know that $\beta_1$ and $\beta_2$ are singular $L_v$-equivalent. Therefore, it suffices to show that the four types of moves in Theorem~\ref{Markov alg} follow from the singular $L_v$-equivalence. Clearly, the real, singular, and virtual conjugation follow from the singular $L_v$-equivalence since the first two are part of the singular $L_v$-equivalence and the latter is a consequence of the singular $L_v$-equivalence (as explained in the paragraph before Theorem~\ref{Markov L-moves}).

Right real and right virtual stabilization (the moves in (ii)) follow from right real and right virtual $L_v$-moves, respectively, plus braid detouring and virtual conjugation in $VSB_{\infty}$. Figure~\ref{fig:stabproof} explains the case of the right virtual stabilization; the right real stabilization follows similarly. 

\begin{figure}[ht]
\[ \raisebox{-25pt}{\includegraphics[height=.74in]{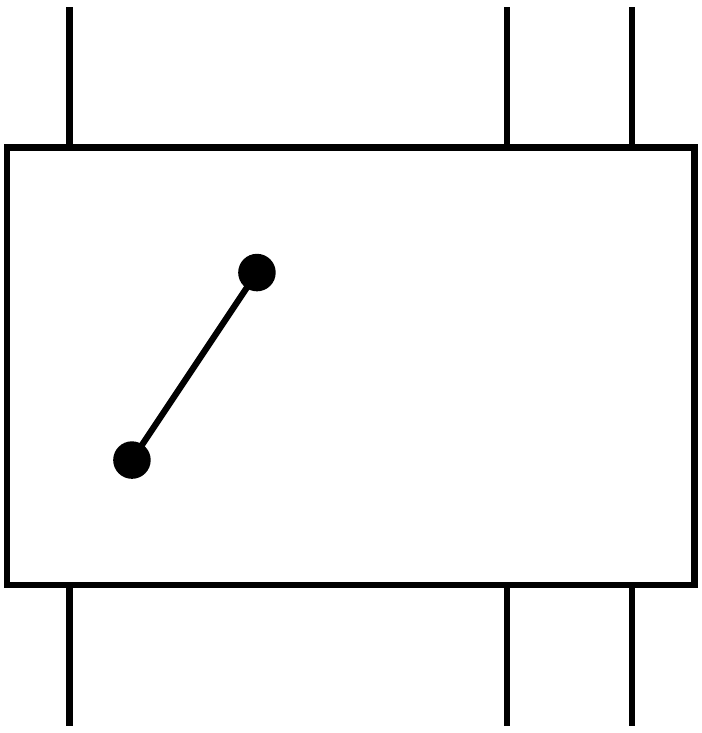}} \,\, \stackrel{\text{right}}{\stackrel{vL_v-move}{\longleftrightarrow}} \,\,\raisebox{-35pt}{\includegraphics[height=1in]{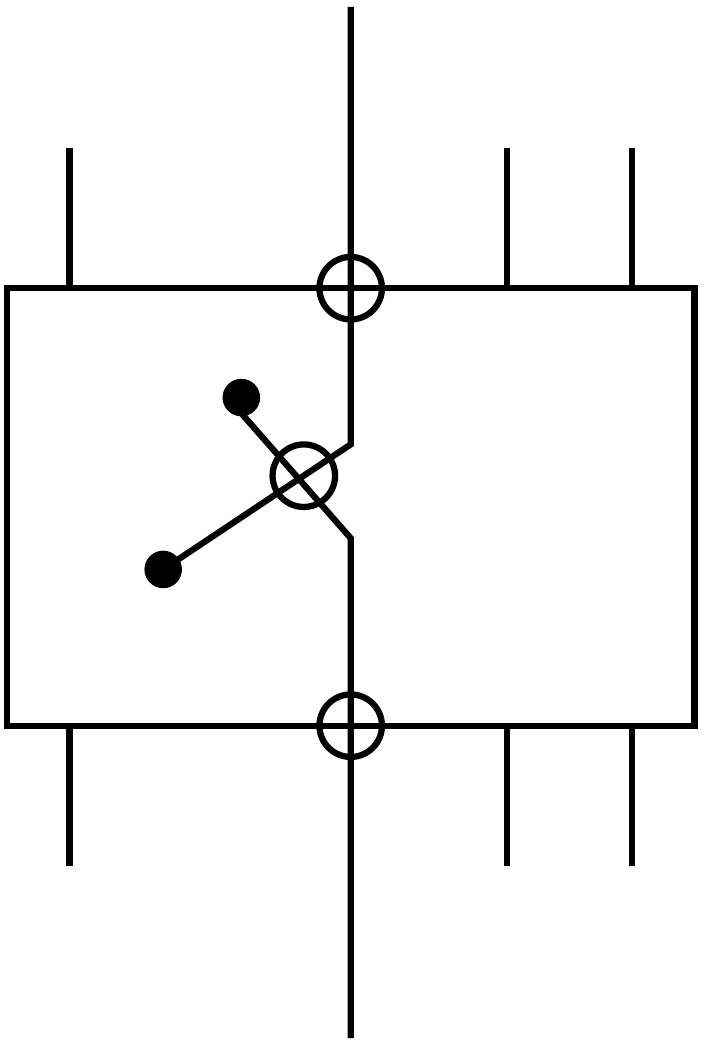}}\,\, \stackrel{\text{braid}}{\stackrel{\text{detours}}{\longleftrightarrow}} \,\, \raisebox{-30pt}{\includegraphics[height=0.92in]{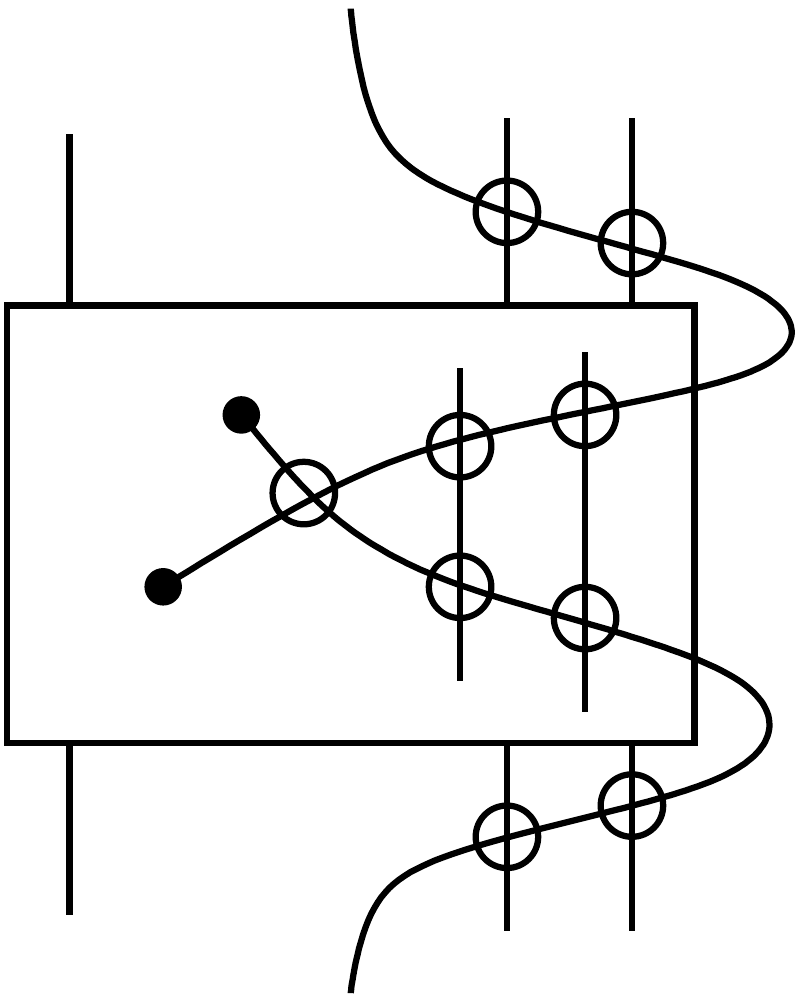}}\,\, \stackrel{\text{virtual}}{\stackrel{\text{conjug.}}{\longleftrightarrow}} \]
\[ \raisebox{-27pt}{\includegraphics[height=0.8in]{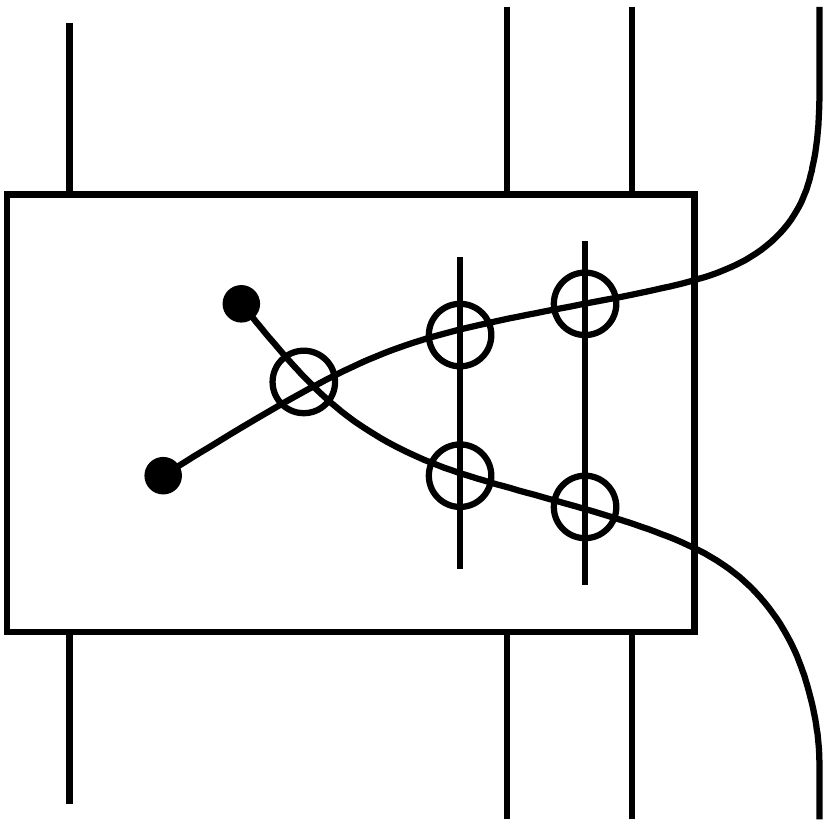}}\,\, \stackrel{\text{detour}}{\stackrel{\text{threads}}{\longleftrightarrow}}\,\,\raisebox{-27pt}{\includegraphics[height=.8in]{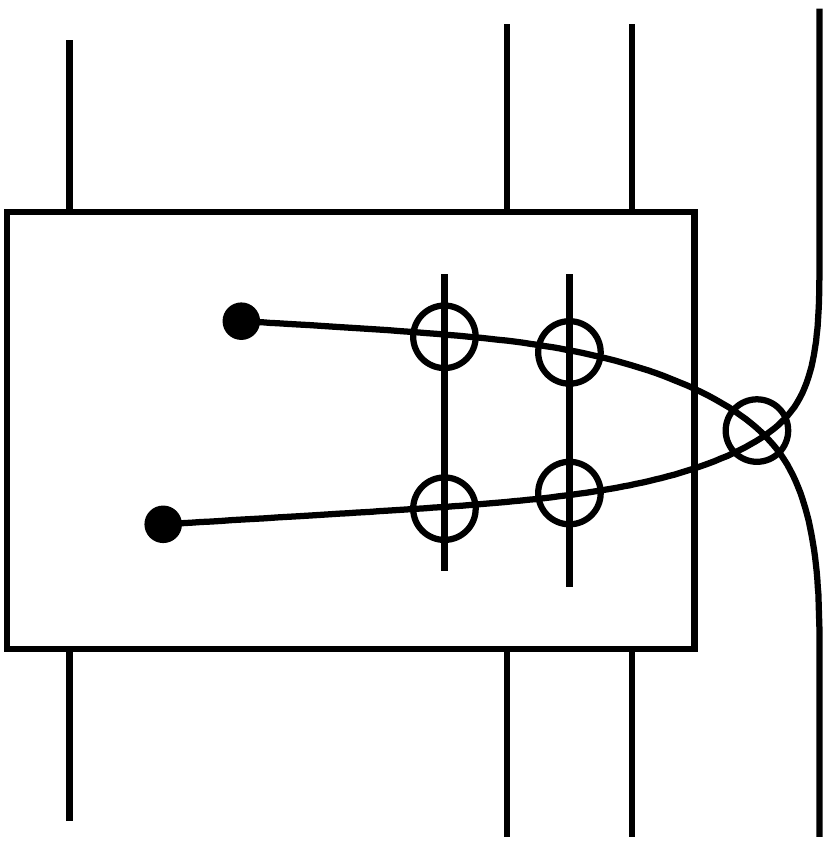}} \,\, \stackrel{\text{virtual}}{\stackrel{\text{conjug.}}{\longleftrightarrow}}\,\,\raisebox{-30pt}{\includegraphics[height=.77in]{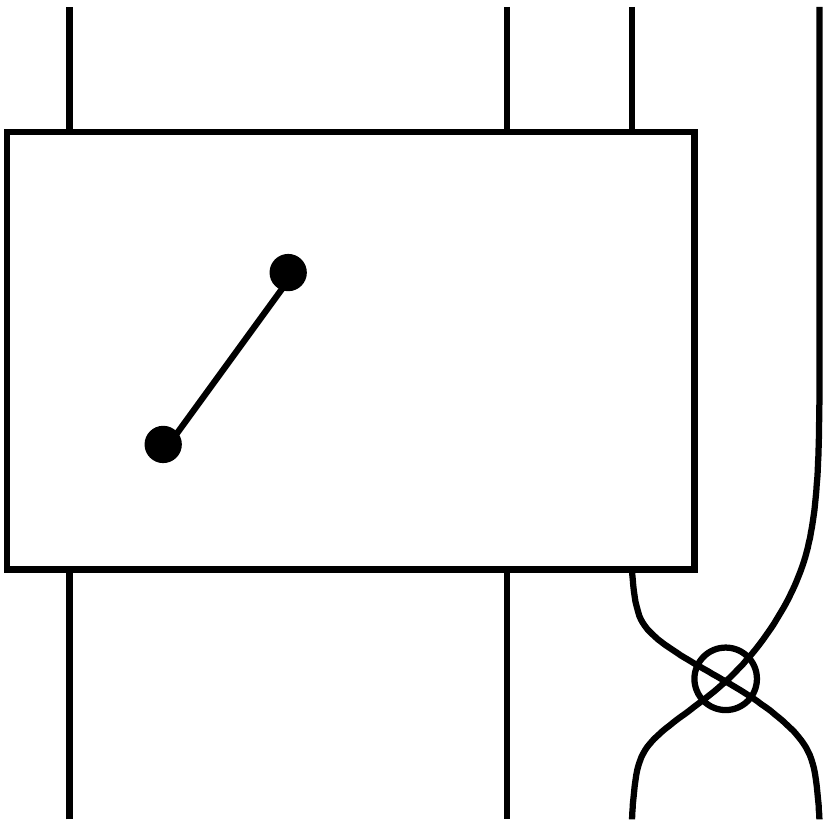}} \]
\caption{Right virtual stabilization is obtained from the right $vL_v$-move, braid detour, and virtual conjugation}\label{fig:stabproof}
\end{figure}

We note that in the last step of Figure~\ref{fig:stabproof}, the virtual conjugation is applied in the smaller braid that contains the threads which cross virtually the pair of braid strands created during the right $vL_v$-move.

The right and left algebraic under-threading (the moves in (iii)) follow from the right and, respectively, the left under-threading $L_v$-moves, braid detour, and virtual conjugation. Figure~\ref{fig:threadingproof} treats the left algebraic under-threading; the right algebraic under-threading is verified in a similar fashion, and therefore is omitted here.

\begin{figure}[ht]
\[ \reflectbox{\raisebox{-25pt}{\includegraphics[height=.74in]{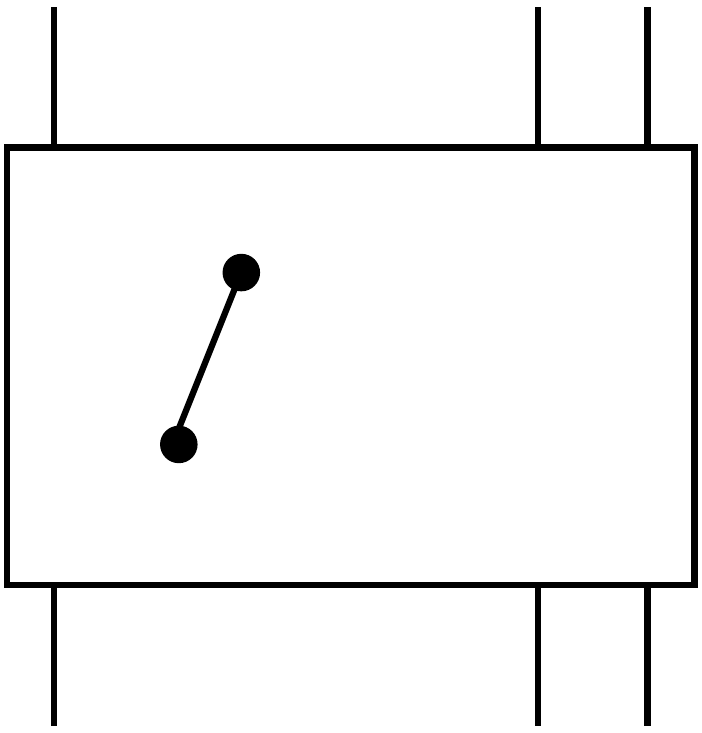}}} \,\,\stackrel{\text{under-threaded}}{\stackrel{L_v -move}{\longleftrightarrow}} \,\, \reflectbox{\raisebox{-27pt}{\includegraphics[height=.8in]{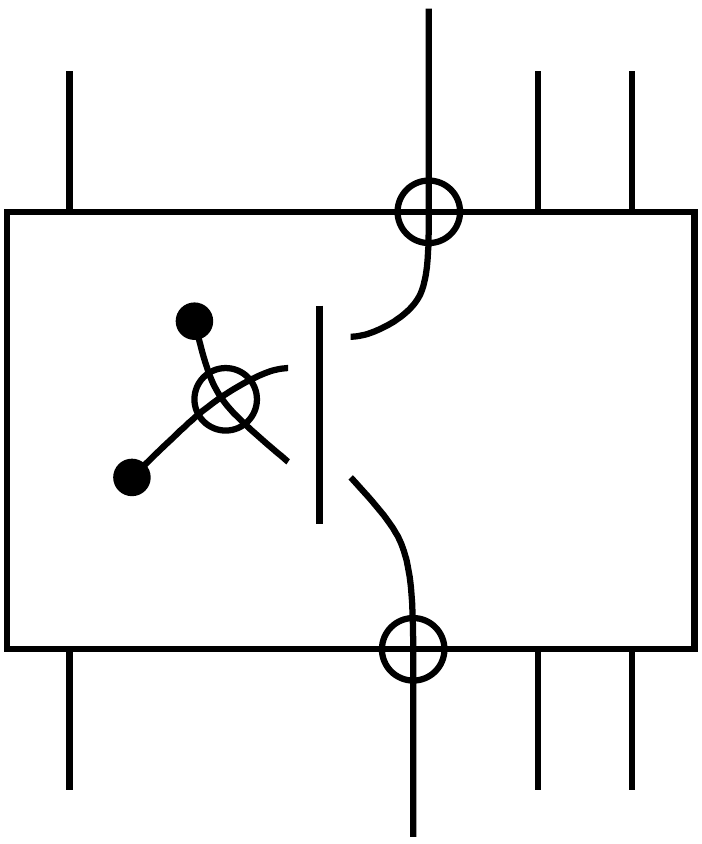}}} \,\, \stackrel{\text{braid}}{\stackrel{\text{detours}}{\longleftrightarrow}} \,\, \reflectbox{\raisebox{-30pt}{\includegraphics[height=.87in]{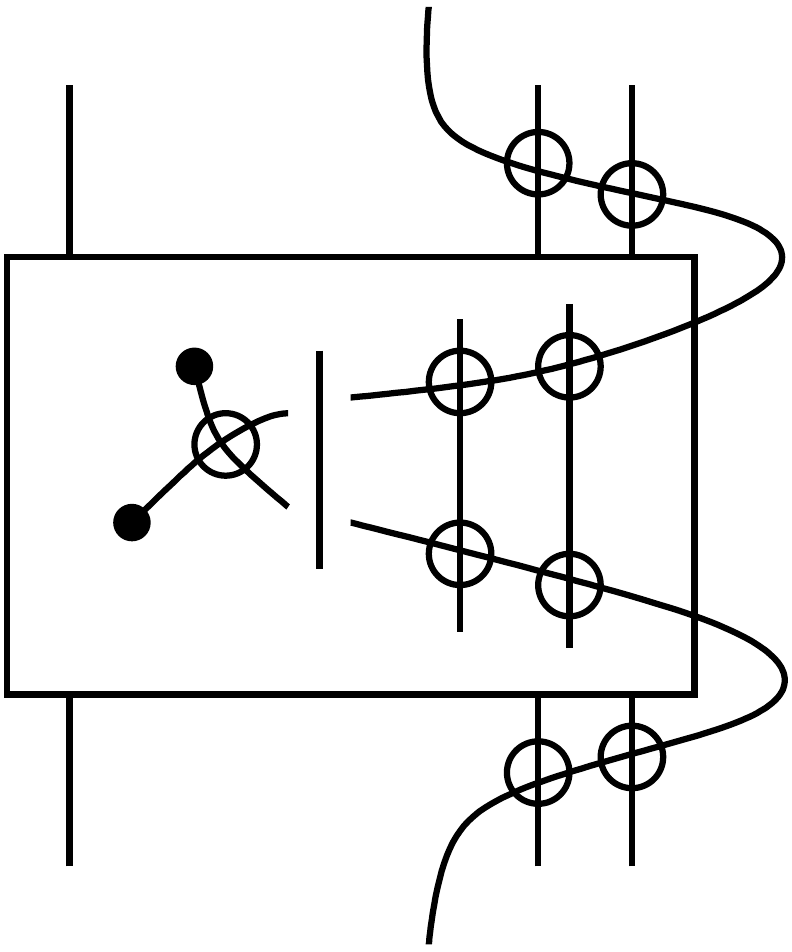}}} \,\, \stackrel{\text{virtual}}{\stackrel{\text{conjug.}}{\longleftrightarrow}} \]
\[ \reflectbox{\raisebox{-25pt}{\includegraphics[height=0.72in]{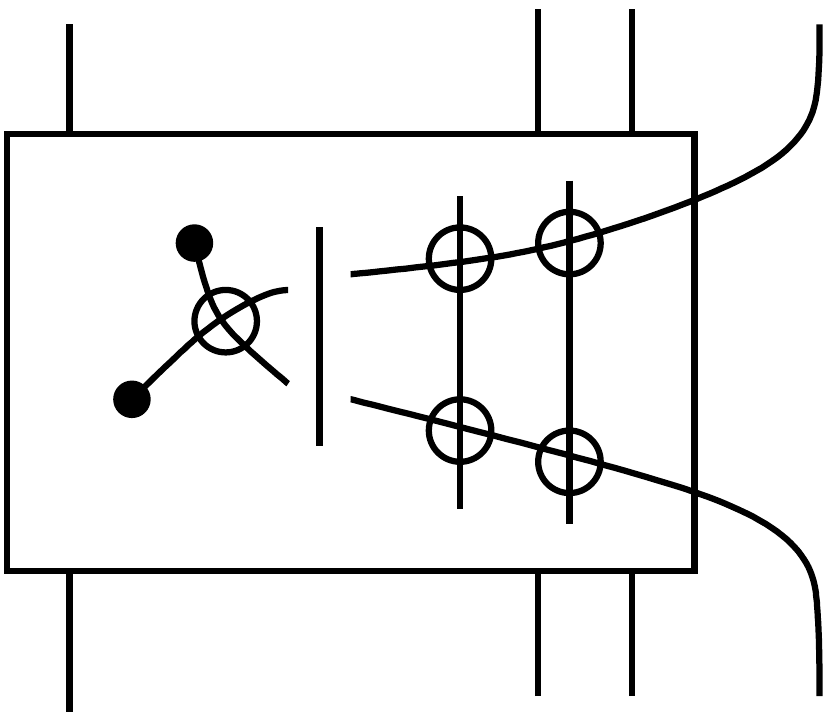}}} \,\, \stackrel{\text{detour}}{\stackrel{\text{virt. threads}} {\longleftrightarrow}}\,  \, \reflectbox{\raisebox{-25pt}{\includegraphics[height=0.72in]{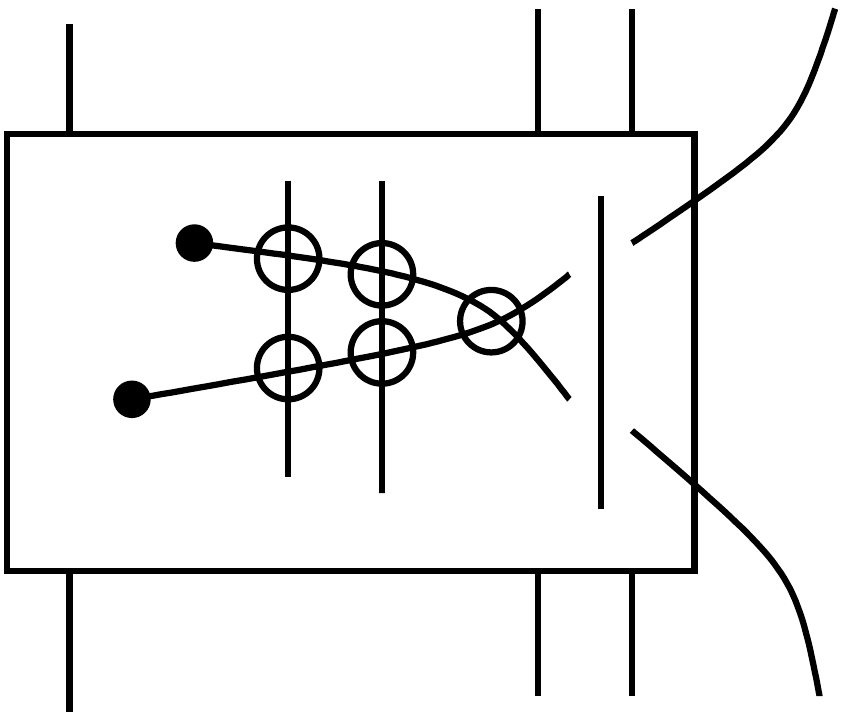}}} \,\, \stackrel{\text{virtual}}{\stackrel{\text{conjug.}}{\longleftrightarrow}}\,\, \reflectbox{\raisebox{-42pt}{\includegraphics[height=.92in]{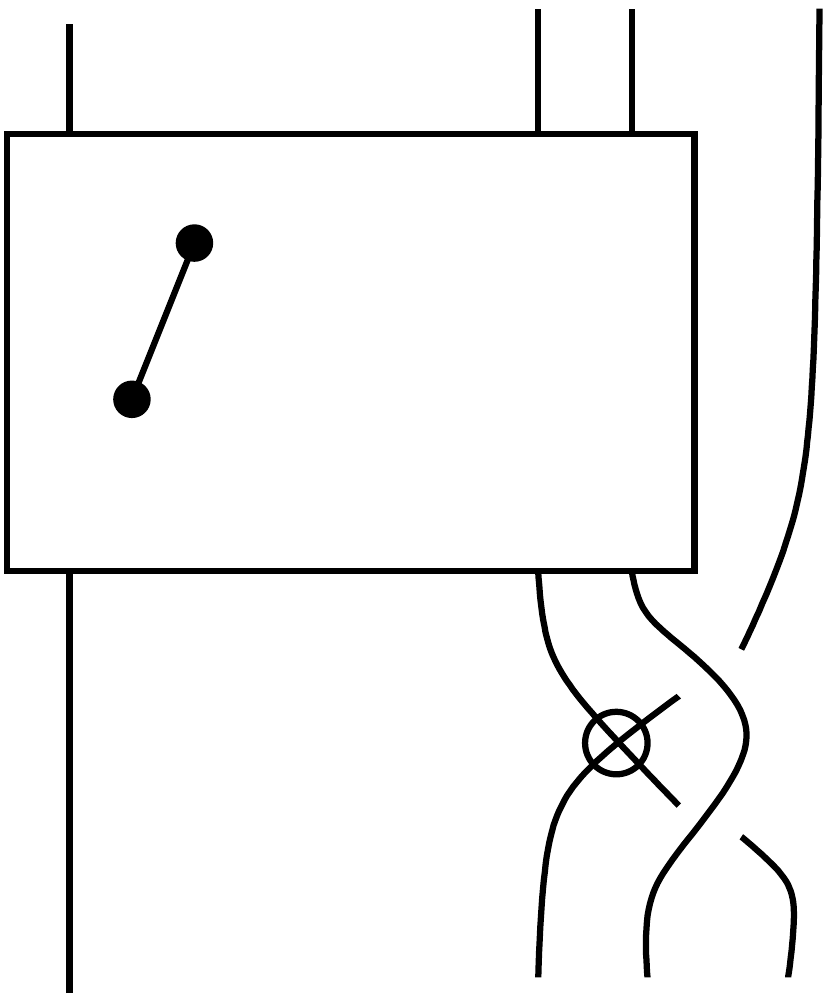}}} \]
\caption{Left algebraic under-threading follows from the left under-threaded $L_v$-move, braid detour, and virtual conjugation}\label{fig:threadingproof}
\end{figure}

The right and left algebraic $rs$-threading (the moves in (iv)) follow similarly. In Figure~\ref{fig:rsproof} we show that the right algebraic $rs$-threading follow from the right $rs$-threaded $L_v$-move, braid detour, and virtual conjugation (the left algebraic $rs$-threading follows similarly, only that it uses instead the left $rs$-threaded $L_v$-move).

\begin{figure}[ht]
\[   \raisebox{-25pt}{\includegraphics[height=.84in]{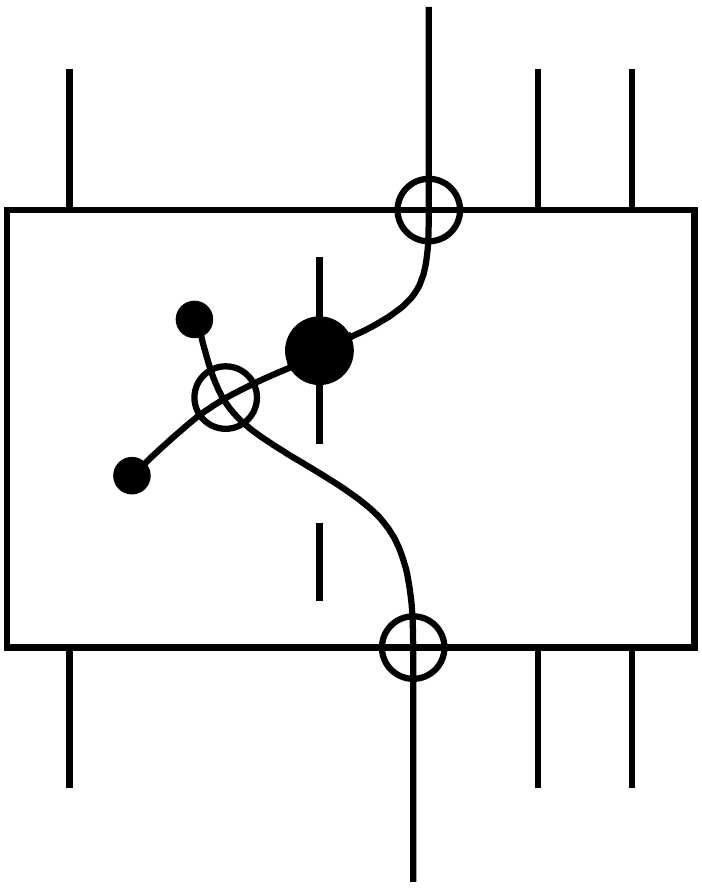}}\stackrel{\text{braid}}{\stackrel{\text{detours}}{\longleftrightarrow}} \,\,\raisebox{-25pt}{\includegraphics[height=.84in]{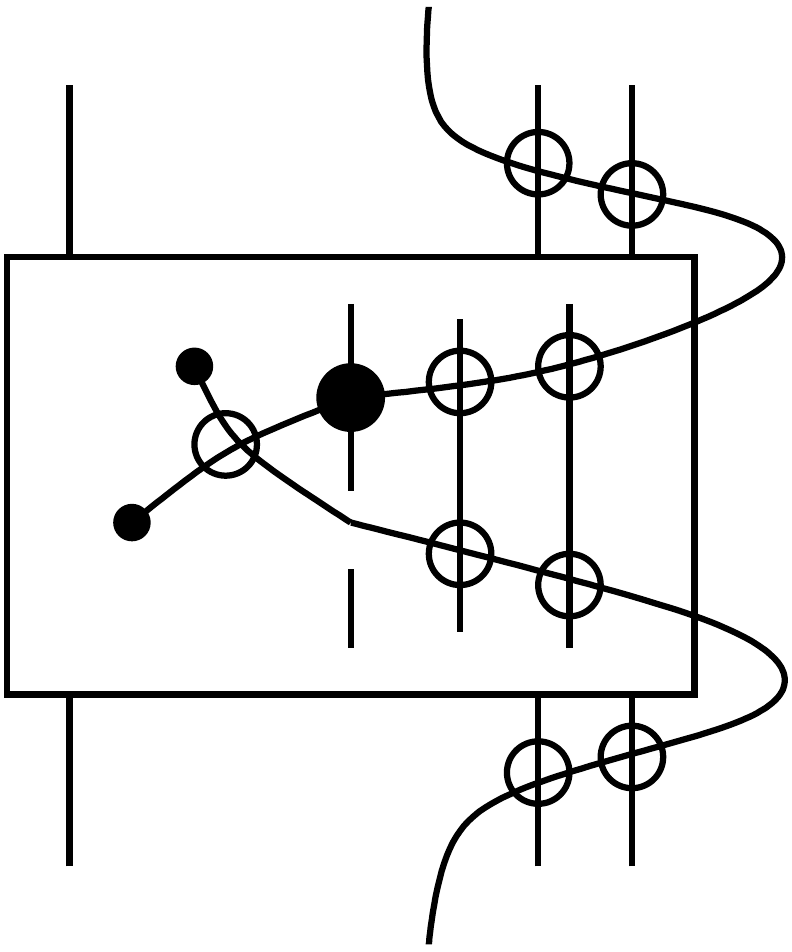}}\,\, \displaystyle \mathop{\longleftrightarrow}^{\text{virtual \, conjug.}}_{\text{detour\, threads}} \, \, \raisebox{-20pt}{\includegraphics[height=.7 in]{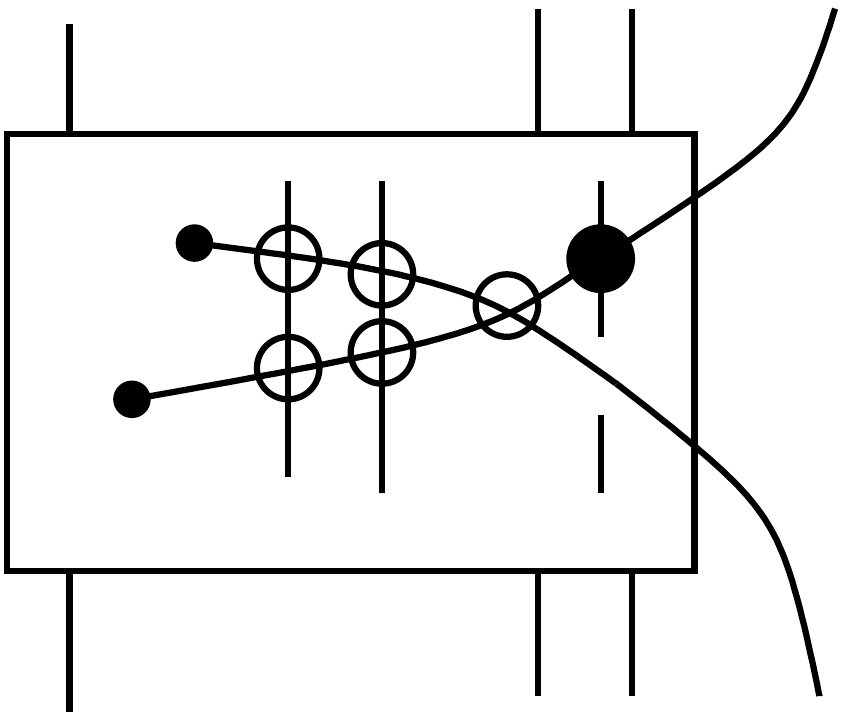}}\,\,\stackrel{\text{virtual}}{\stackrel{\text{conjug.}}{\longleftrightarrow}} \,\,\raisebox{-32pt}{\includegraphics[height=.86in]{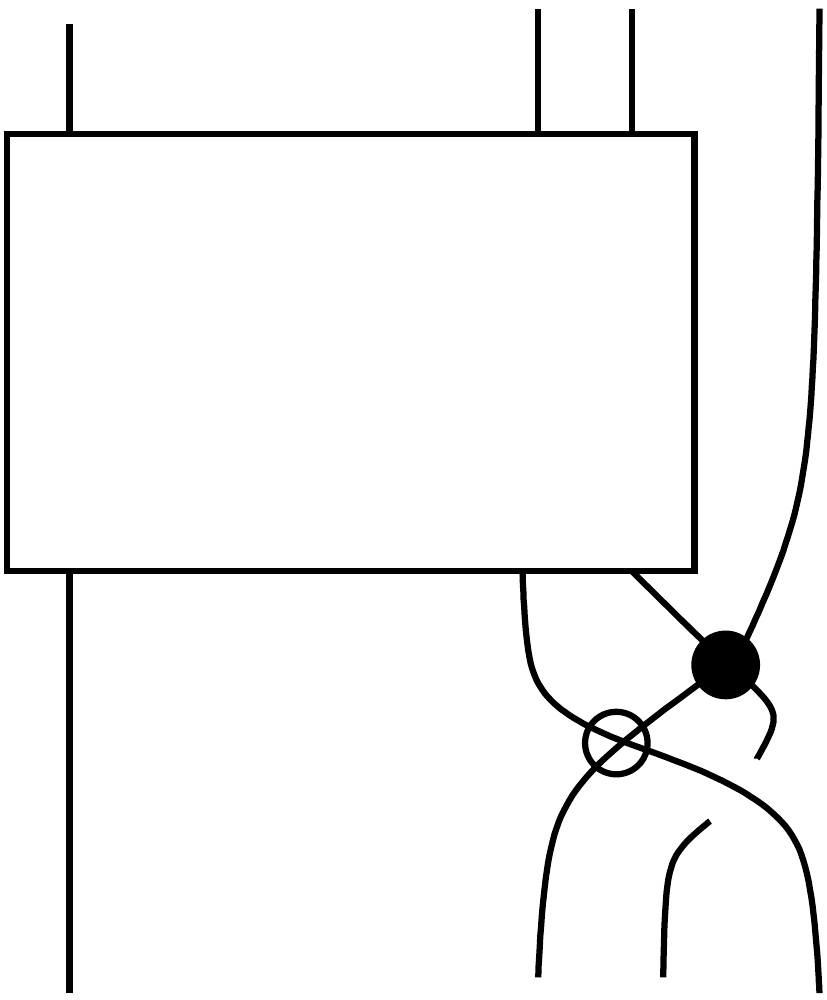}}\]

\[   \raisebox{-25pt}{\includegraphics[height=.84in]{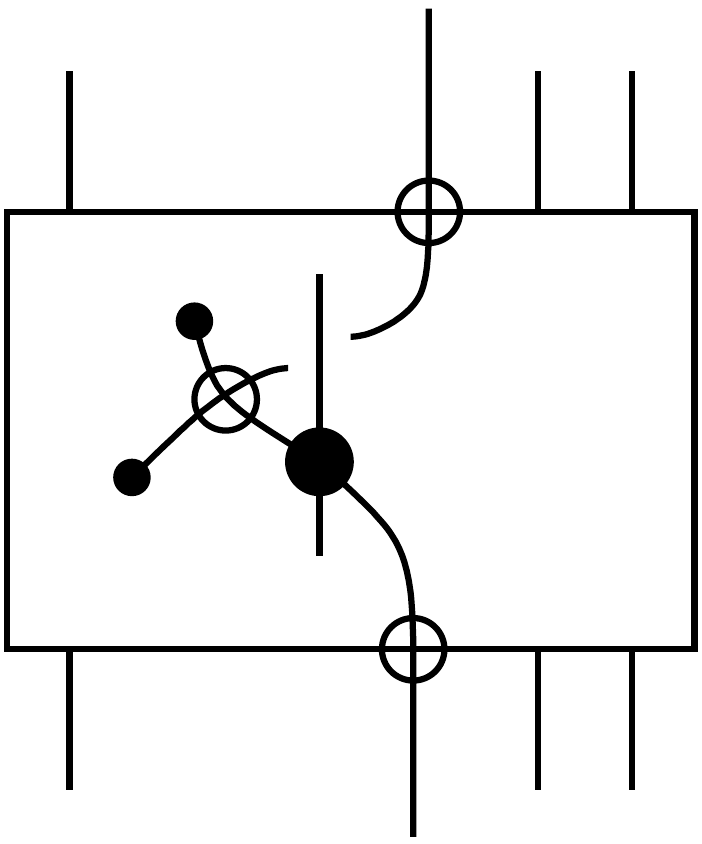}}\stackrel{\text{braid}}{\stackrel{\text{detours}}{\longleftrightarrow}} \,\,\raisebox{-25pt}{\includegraphics[height=.84in]{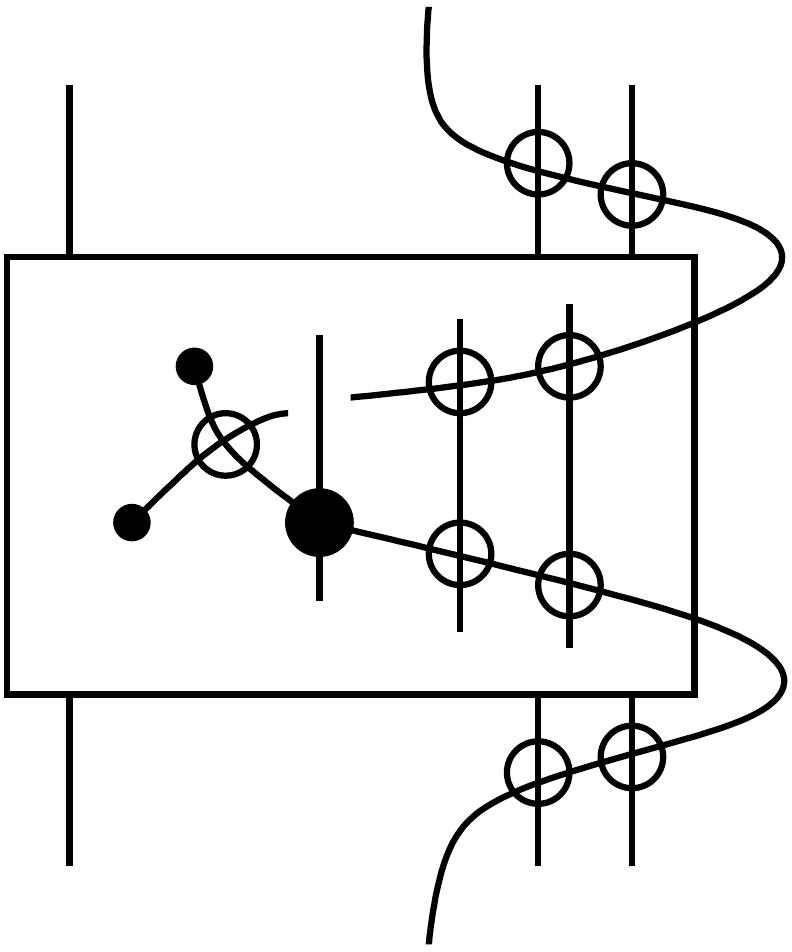}}\,\, \displaystyle \mathop{\longleftrightarrow}^{\text{virtual \, conjug.}}_{\text{detour\, threads}} \, \, \raisebox{-20pt}{\includegraphics[height=.7 in]{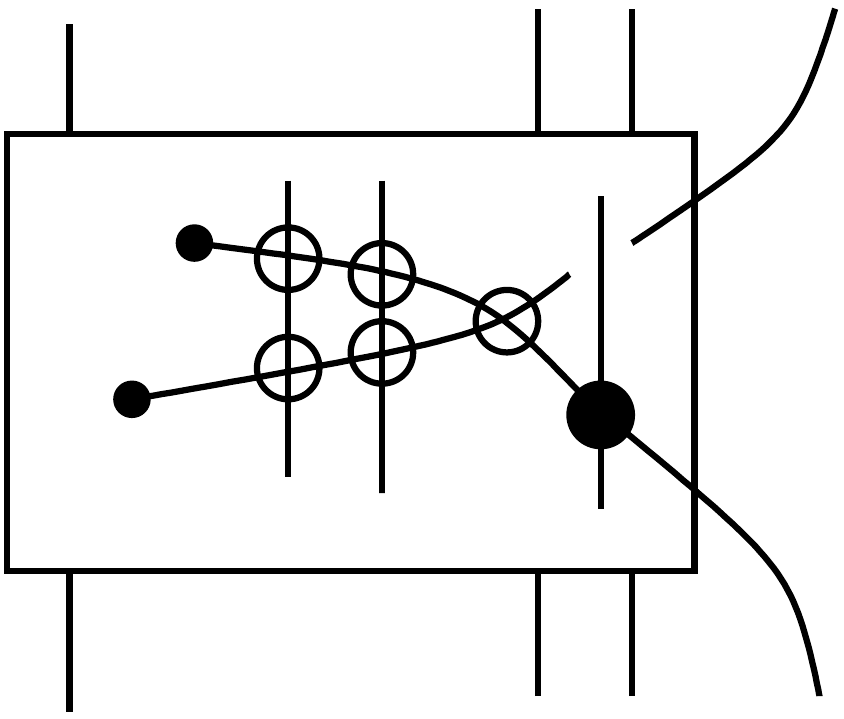}}\,\,\stackrel{\text{virtual}}{\stackrel{\text{conjug.}}{\longleftrightarrow}} \,\,\raisebox{-32pt}{\includegraphics[height=.86in]{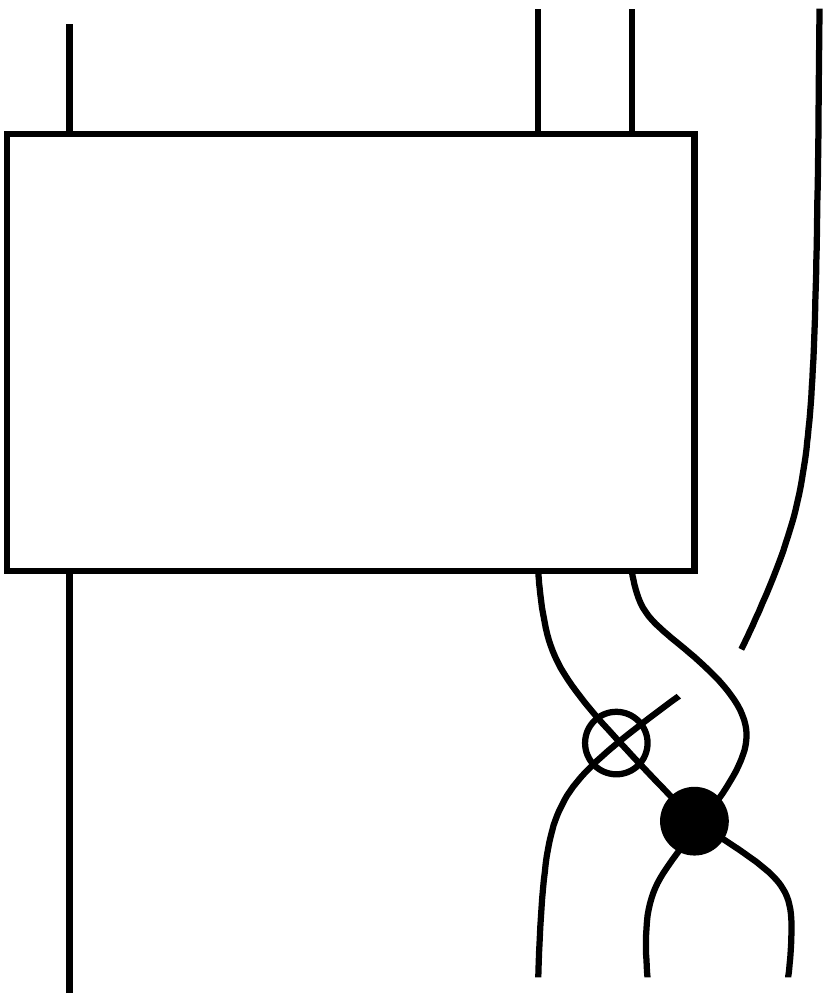}}\]
\caption{Right algebraic $rs$-threading follows from the right  $rs$-threaded $L_v$-move, braid detour, and virtual conjugation}\label{fig:rsproof}
\end{figure}
This completes the proof.
\end{proof}
%%%%%%%%%%%%%%%%%%%%%%%%%%%%%%%%%%%%%%%%%%%%%%%%%

\section{A reduced presentation for ${VSB}_n$}\label{sec:reduced}

In~\cite[Section 3] {KL1}, L.H. Kauffman and S. Lambropoulou provided a reduced presentation for the virtual braid group. Inspired by their work, in this section we give a reduced presentation for the virtual singular braid monoid on $n$ strands, $VSB_n$. This presentation uses fewer generators which listed below: 
\[\{\sigma_1, \sigma_1^{-1}, \tau_1, v_1,\ldots,v_{n-1}\}\]
and assumes the following relations, which we refer to as the \textit{defining relations:}
%defining relations
\begin{eqnarray}
\sigma_{i+1} ^{\pm 1}  & := & (v_i\ldots v_2v_1)(v_{i+1}\ldots v_3v_2)\sigma_1 ^{ \pm 1} (v_2v_3\ldots v_{i+1})(v_1v_2\ldots v_i)  \label{A14} \\  %sigma sigma inverse defining relation
\tau_{i+1} & := & (v_i\ldots v_2v_1)(v_{i+1}\ldots v_3v_2)\tau_1 (v_2v_3\ldots v_{i+1})(v_1v_2\ldots v_i), \label{A15} %tau defining relations 
\end{eqnarray}
where $1\leq i \leq n-2$.  As shown in Figure~\ref{fig:DRtau}, the defining relations are the braid form versions of the detour move. In other words, we detour the real crossings $\sigma_{i+1}^{\pm1}$ and singular crossings $\tau_{i+1}$ to the left side of the braid using the strands $1, 2, \dots, i$.

\begin{figure}[ht]
\[ \raisebox{-35pt}{\includegraphics[height=0.8in]{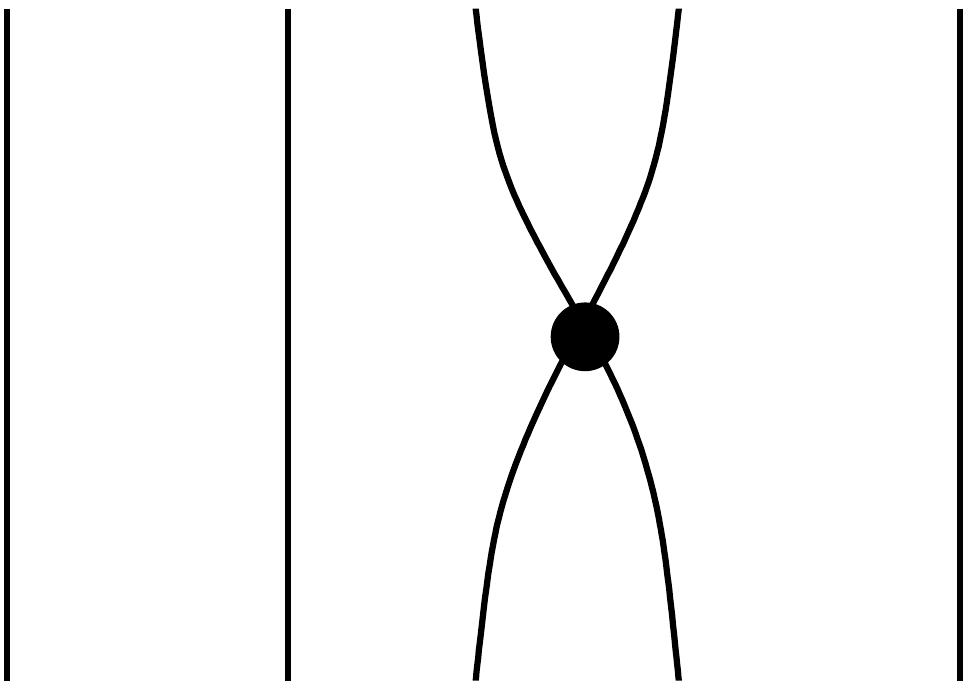}}
 \put(-77, -10){\fontsize{12}{8}$\dots$}
  \put(-20, -10){\fontsize{12}{8}$\dots$}
  \put(-60, 25){\fontsize{8}{8}$i$}
  \put(-50, 25){\fontsize{8}{8}$i+1$}
 \put(-30, 25){\fontsize{8}{8}$i+2$} \hspace{0.5cm}  \raisebox{-13pt}{=}  \hspace{0.5cm} 
\raisebox{-45pt}{\includegraphics[height=1in ]{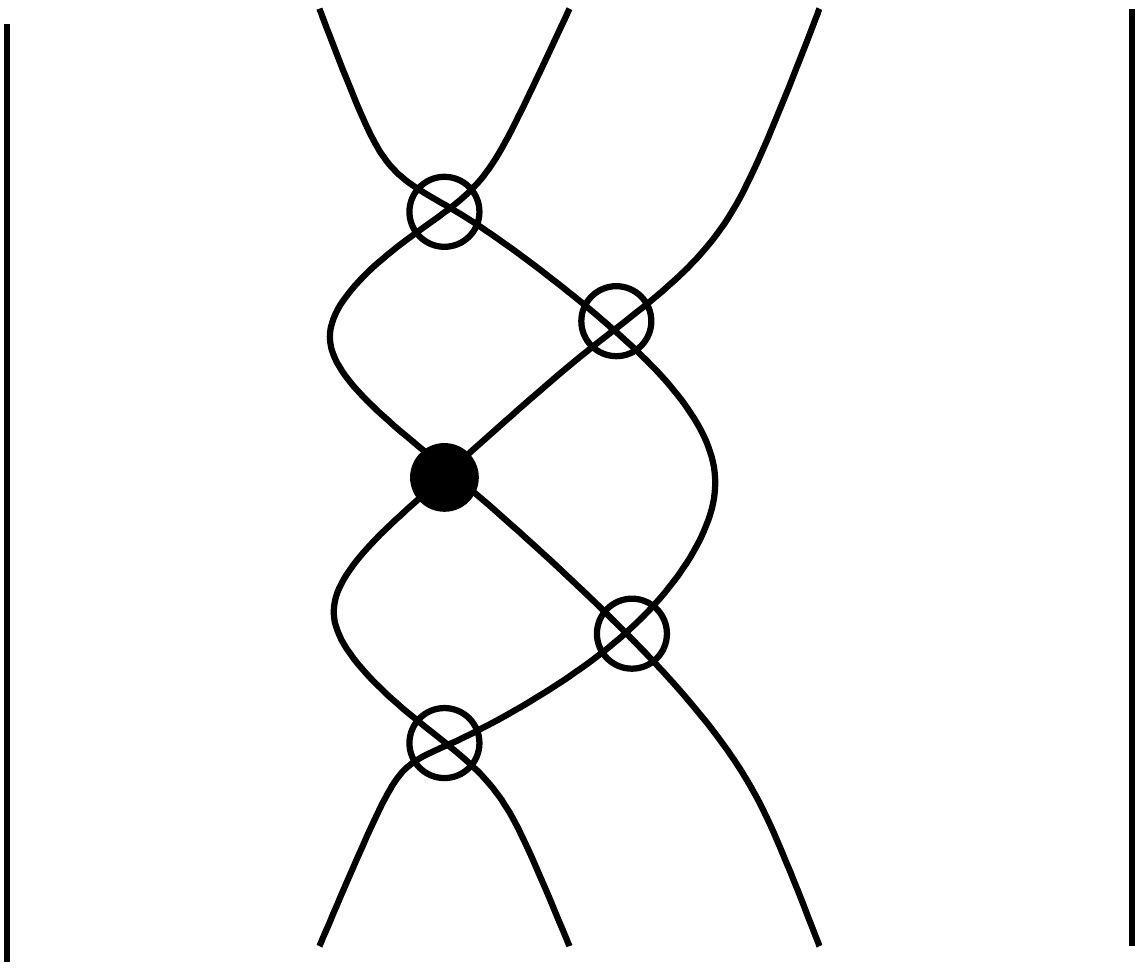}}
 \put(-80, -10){\fontsize{12}{8}$\dots$}
  \put(-22, -10){\fontsize{12}{8}$\dots$}
  \put(-63, 30){\fontsize{8}{8}$i$}
  \put(-51, 30){\fontsize{8}{8}$i+1$}
 \put(-31, 30){\fontsize{8}{8}$i+2$}\hspace{0.5cm} \raisebox{-13pt}{=} \hspace{0.6cm}
 \raisebox{-60pt}{\includegraphics[height=1.4in]{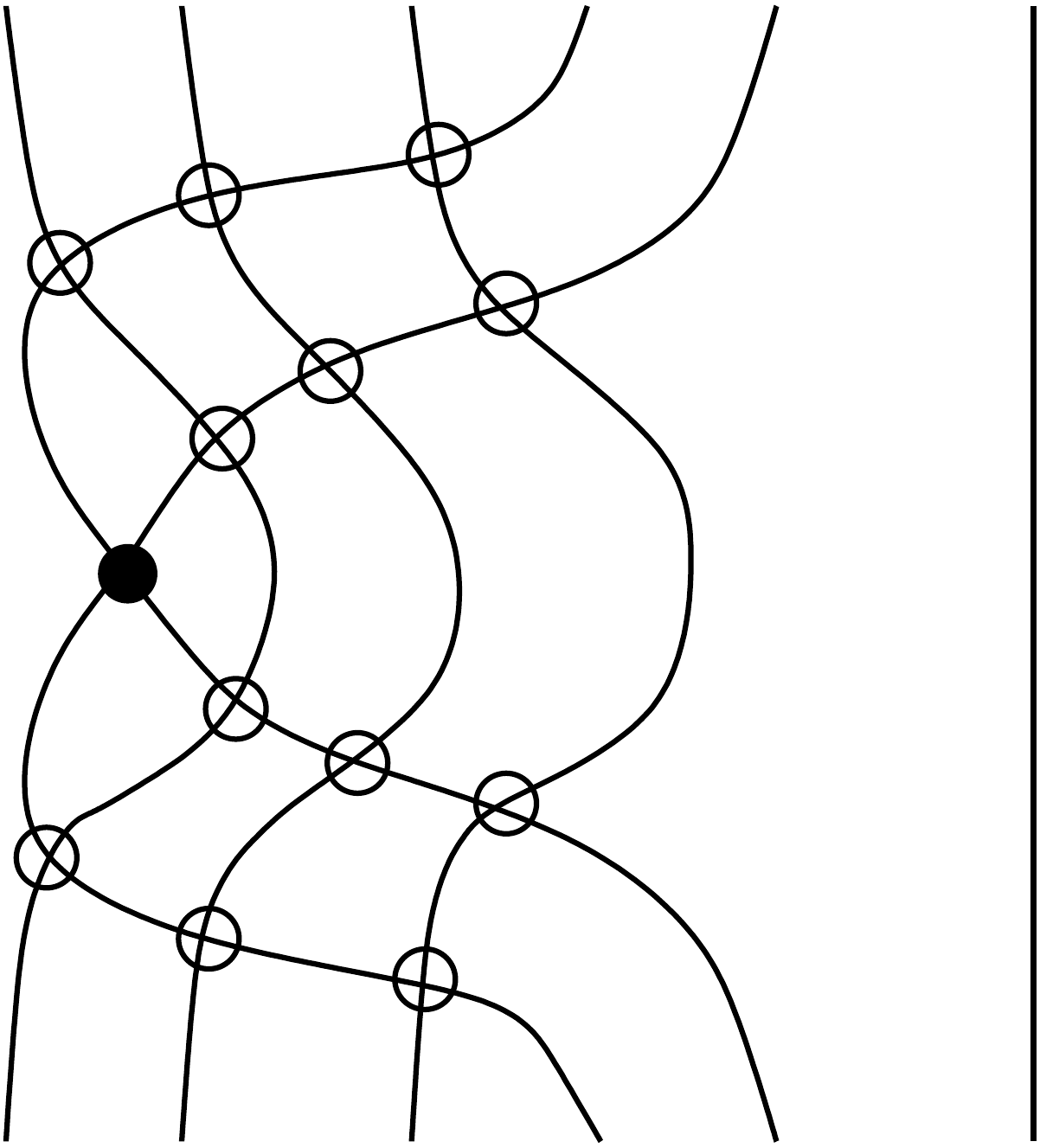}} 
  \put(-23, -10){\fontsize{12}{8}$\dots$}
    \put(-48, -10){\fontsize{12}{8}$\dots$}
     \put(-93, 43){\fontsize{8}{8}$1$}
      \put(-78, 43){\fontsize{8}{8}$2$}
       \put(-71, -55){\fontsize{12}{8}$\dots$}
          \put(-70, 33){\fontsize{12}{8}$\dots$}
    \put(-58, 43){\fontsize{8}{8}$i$}
  \put(-48, 43){\fontsize{8}{8}$i+1$}
 \put(-28, 43){\fontsize{8}{8}$i+2$}
  \put(-3, 43){\fontsize{8}{8}$n$}
 \]
 \caption{Detouring the crossing $\tau_{i+1}$}\label{fig:DRtau} 
\end{figure} 

Any portion of a given virtual singular braid can be detoured to the front of the braid (as shown in Figure~\ref{fig:braid detour-moves}), where all of the new crossings that are created are virtual. For this reason, in the reduced presentation for $VSB_n$, the relations involving real crossings or singular crossings will be imposed to occur between the first strands of a braid. 

\begin{figure}[ht]
\[ \raisebox{-60pt}{\includegraphics[height=1.8in]{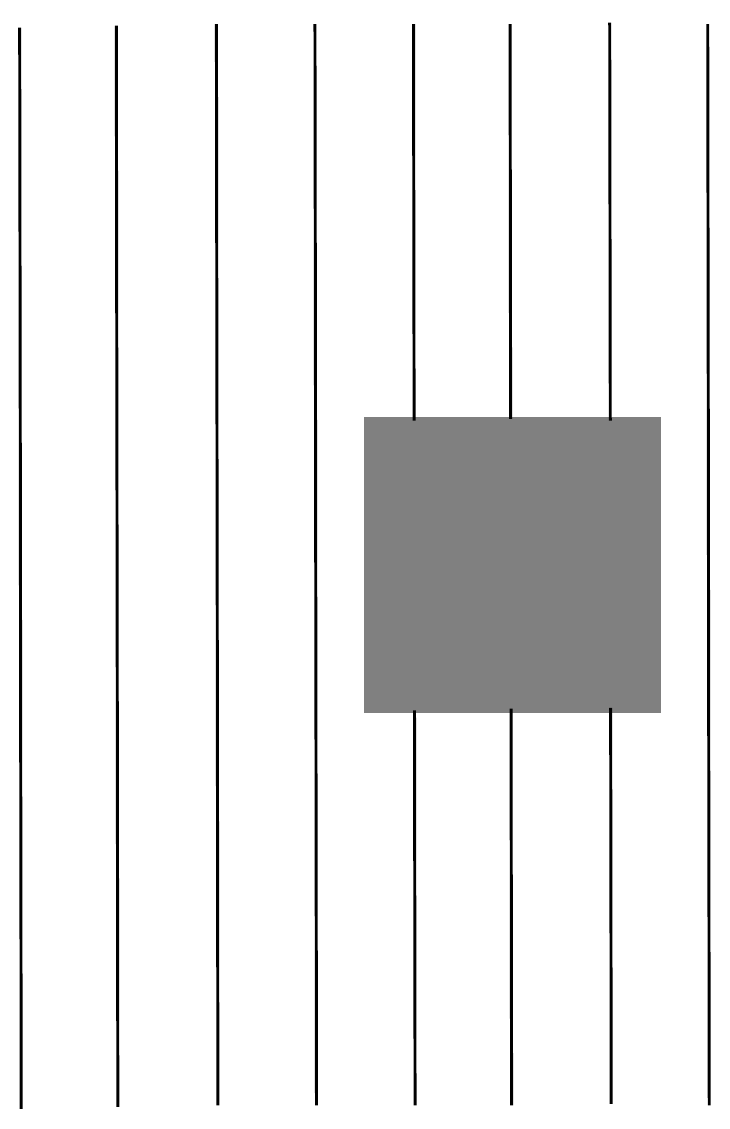}}\,\, \longleftrightarrow \,\, \raisebox{-60pt}{\includegraphics[height=1.8in]{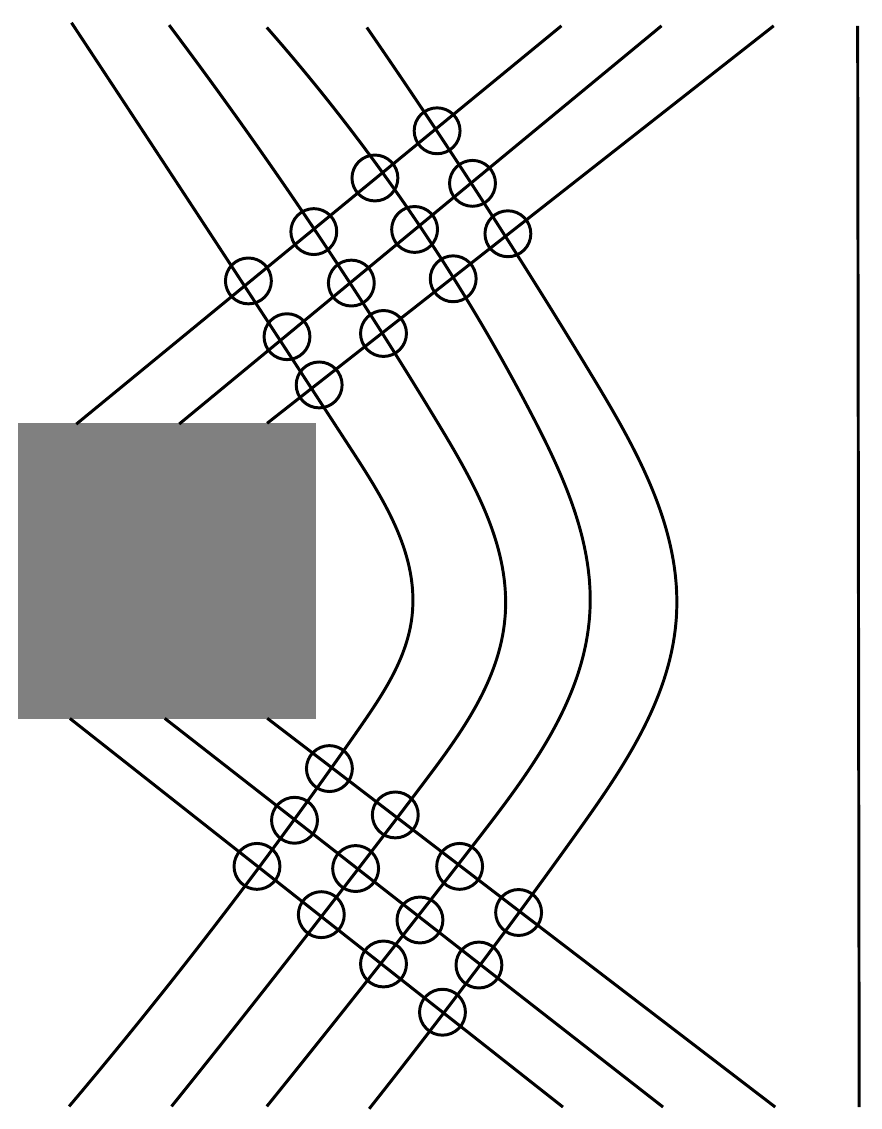}}
\]
\caption{Detouring several strands in a braid}\label{fig:braid detour-moves}
\end{figure} 

We remark that the relations $v_i\sigma_j v_i=v_j\sigma_iv_j$ and $v_i\tau_jv_i=v_j\tau_iv_j$ for $|i-j|=1$ are not needed in the reduced presentation for $VSB_n$, since they were implicitelly used in the defining relations \eqref{A14} and \eqref{A15}.

\begin{theorem} \label{thm:reduced}
The virtual singular braid monoid $VSB_{n}$ has the following reduced presentation with generators $\{\sigma_1 ^{ \pm 1}, \tau_1, v_1,\ldots,v_{n-1}\},$ 
and relations:
\begin{align}
v_i v_j v_i &=  v_j v_i v_j, && |i-j|=1          \label{A18}\\% V3
v_i v_j      &= v_j v_i             &&  |i-j|>1    \label{A19} \\ %far commutativity for virtuals
{v_i}^2      &= 1_n                   && 1 \leq i \leq n-1   \label{A21}\\%virtual identity
\sigma_1\tau_1 &=  \tau_1 \sigma_1  \,\, \, \text{and} \,\,\, \sigma_1 \sigma_1 ^{-1}  = 1_n     \label{A20a} \\ % RS1 base case and sigma identity
\tau_1 v_i  &= v_i \tau_1\,\,\, \text{and}\,\, \,\sigma_1v_i=v_i\sigma_1    &&\, i \geq 3    \label{A23} %far commutativity singular and classical w/virtual base case
\end{align}
\begin{align}
\sigma_1 ( v_1 v_2 \sigma_1 v_2 v_1 ) \sigma_1 &= (v_1 v_2 \sigma_1 v_2 v_1 ) \sigma_1(v_1 v_2 \sigma_1 v_2 v_1 )  \label{A24} \\% R3 base case 
\tau_1( v_1 v_2 \sigma_1 v_2  v_1 ) \sigma_1  &= ( v_1 v_2 \sigma_1 v_2 v_1 )\sigma_1 (v_1 v_2 \tau_1 v_2 v_1 ) \label{A24b} \\ % RS3 base case 
\sigma_1 (v_2 v_3 v_1 v_2 \sigma_1 v_2 v_1 v_3 v_2) &= (v_2 v_3 v_1 v_2 \sigma_1 v_2 v_1 v_3 v_2)  \sigma_1 \label{A27} \\ %far commutativity for classicals base case
\tau_1 (v_2 v_3 v_1 v_2 \sigma_1 v_2 v_1 v_3 v_2) &= (v_2 v_3 v_1 v_2 \sigma_1 v_2 v_1 v_3 v_2) \tau_1 \label{A28}\\%far commutativity classical  w/singular base case
\tau_1 (v_2 v_3 v_1 v_2 \tau_1 v_2 v_1 v_3 v_2) &= (v_2 v_3 v_1 v_2 \tau_1 v_2 v_1 v_3 v_2) \tau_1 \label{A29} %far commutativity for singulars base case
\end{align}
\end{theorem}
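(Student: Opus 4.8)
The plan is to prove this by establishing a two-way equivalence between the standard presentation of $VSB_n$ (given in Section~\ref{sec:monoid}) and the proposed reduced presentation. Let me call the standard presentation $P$ and the reduced presentation $R$. I must show that the monoid defined by $R$ is isomorphic to $VSB_n$. The natural strategy is to exhibit mutually inverse monoid homomorphisms, or more directly, to show that each presentation is a consequence of the other once we interpret the generators appropriately.

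First I would set up the dictionary. In $R$, the elements $\sigma_{i+1}^{\pm1}$ and $\tau_{i+1}$ for $i \geq 1$ are not generators but are \emph{defined} by the formulas \eqref{A14} and \eqref{A15} as words in $\{\sigma_1^{\pm1}, \tau_1, v_1, \dots, v_{n-1}\}$. So there is an obvious candidate surjection from the monoid on generators $\{\sigma_1^{\pm1}, \tau_1, v_1, \dots, v_{n-1}\}$ onto $VSB_n$: send each reduced generator to the element of $VSB_n$ it names, and check that the reduced relations \eqref{A18}--\eqref{A29} all hold in $VSB_n$. Conversely, I would define a map from $VSB_n$ (given by presentation $P$) into the monoid defined by $R$, sending $\sigma_i^{\pm1} \mapsto$ (the word in \eqref{A14}) and $\tau_i \mapsto$ (the word in \eqref{A15}) and $v_i \mapsto v_i$, and check that every defining relation of $P$ is satisfied. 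Establishing these two maps and showing they are mutually inverse proves the isomorphism.

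The bulk of the work, and the main obstacle, is verifying that the full set of $P$-relations follows from the $R$-relations under the definitions \eqref{A14}, \eqref{A15}. First I would prove the two detour-type identities $v_i\sigma_j v_i = v_j\sigma_i v_j$ and $v_i\tau_j v_i = v_j\tau_i v_j$ for $|i-j|=1$; as the remark before the theorem notes, these are built into the defining relations, so they should follow directly by manipulating \eqref{A14}, \eqref{A15} using only the symmetric-group relations \eqref{A18}--\eqref{A21} on the $v_i$. Next I would establish the far-commutation relations $g_i h_i = h_i g_i$ for $|i-j|>1$ and the mixed relations $\sigma_i\tau_i = \tau_i\sigma_i$, $\sigma_i\sigma_j\sigma_i = \sigma_j\sigma_i\sigma_j$, $\sigma_i\sigma_j\tau_i = \tau_j\sigma_i\sigma_j$, and the singular braid relations, all for general $i$. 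The key technical device is that conjugation by the appropriate product of $v$'s in \eqref{A14}, \eqref{A15} acts like the symmetric group permuting indices: one shows that relations stated for the first strands (e.g. \eqref{A24}--\eqref{A29}, which are the index-$1$ instances of the braid-type relations expressed via the $v$-conjugates) propagate to all indices precisely because the $v_i$ satisfy the full symmetric-group presentation and commute appropriately with $\sigma_1, \tau_1$ via \eqref{A23}. I expect the most delicate step to be verifying the three-strand relations such as $\sigma_i\sigma_{i+1}\sigma_i = \sigma_{i+1}\sigma_i\sigma_{i+1}$ and $\sigma_i\sigma_{i+1}\tau_i = \tau_{i+1}\sigma_i\sigma_{i+1}$ for arbitrary $i$, reducing them to the hypotheses \eqref{A24}--\eqref{A29}, since these require careful bookkeeping of long $v$-words and repeated use of the braid relation \eqref{A18} on the $v_i$.

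For the reverse containment I would check that each reduced relation holds in $VSB_n$, which is comparatively routine: \eqref{A18}--\eqref{A21} are exactly the symmetric-group relations already present in $P$; \eqref{A20a} and \eqref{A23} are instances of $P$-relations; and \eqref{A24}--\eqref{A29} are, after substituting the definitions of $\sigma_2, \sigma_3, \tau_2, \tau_3$ from \eqref{A14}, \eqref{A15}, simply the index-$1$ cases of the braid-type and commutation relations of $P$, so they hold automatically. A clean way to organize the whole argument is to invoke Tietze transformations: starting from $P$, one adds the $v$-conjugate definitions of $\sigma_{i+1}^{\pm1}$ and $\tau_{i+1}$ as redundant generators, then shows that all relations of $P$ other than those retained in $R$ become derivable and hence can be deleted, leaving exactly $R$. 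I would present the forward direction (deriving $P$ from $R$) as the substantive content, using the permutation action of the $v_i$ to transport the first-strand relations to all strands, and relegate the reverse direction to a brief verification.
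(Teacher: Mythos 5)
Your proposal is correct and follows essentially the same route as the paper: the authors likewise treat \eqref{A14}--\eqref{A15} as definitions of the redundant generators, prove a preparatory $v$-word identity, and then derive every relation of the original presentation of $VSB_n$ from the reduced relations in a sequence of lemmas (far commutations, the braid-type relations $\sigma_i\sigma_j\sigma_i=\sigma_j\sigma_i\sigma_j$ and $\tau_i\sigma_j\sigma_i=\sigma_j\sigma_i\tau_j$, the $RS1$-type relation, and the detour relations), exactly the propagation-by-symmetric-group-conjugation scheme you describe, with the reverse containment noted as routine since the retained relations are base cases of the original ones. Your Tietze-transformation framing is just a formal packaging of what the paper does implicitly.
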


We give in Figure \ref{reduced presentations} the diagrammatic representations of relations \eqref{A24} and \eqref{A29}, pictured from left to right, respectively. 

\begin{figure}[ht]
\[   \raisebox{-10pt}{\includegraphics[height=2in]{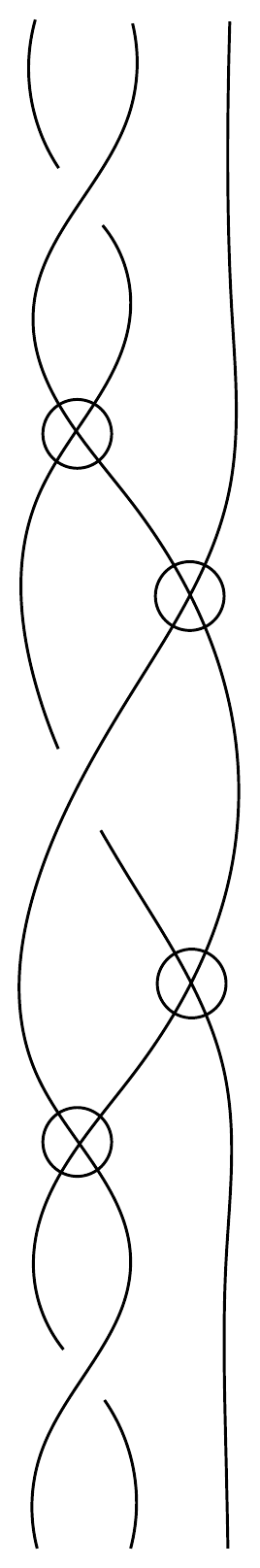}}\hspace{0.5cm} \raisebox{60pt}{=} \hspace{0.5cm}
 \,\,\raisebox{-45pt}{\includegraphics[height=3in]{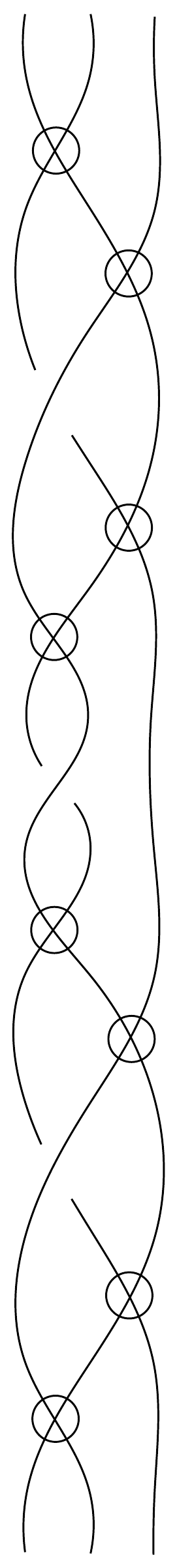}}\,\, \hspace{0.7in} \,\, \raisebox{-20pt}{\includegraphics[height=2.3in]{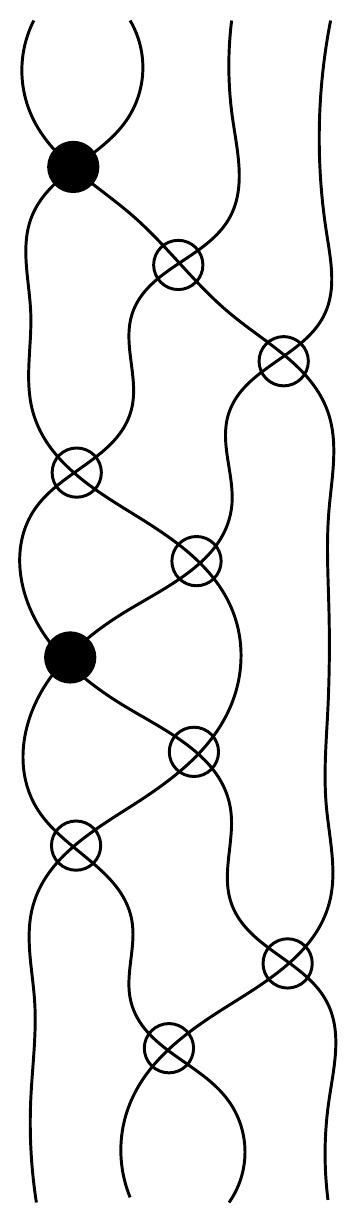}}\hspace{0.5cm} \raisebox{60pt}{=} \hspace{0.5cm}\raisebox{-20pt}{\includegraphics[height=2.3in]{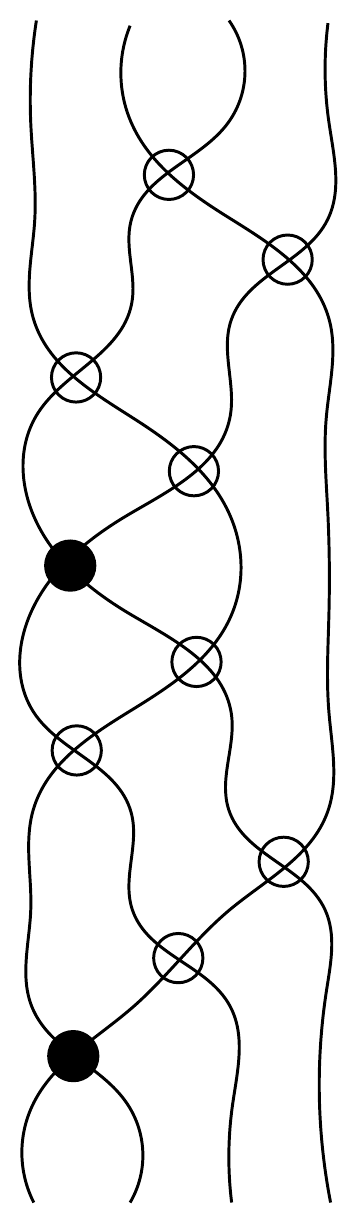}}\]
\caption{Diagrammatic representations of relations \eqref{A24} and \eqref{A29}}
\label{reduced presentations}
\end{figure}

Note that in the reduced presentation for $VSB_n$ we kept all of the original virtual relations (relations involving only virtual crossings/generators). In addition, we have kept the relations involving $\sigma_i$ or $\tau_i$ that can be represented on the left side of the braid. For convenience, we will call these the \textit{base cases} of the original relations. For example, the base case for the commuting relations $\sigma_i \sigma_j = \sigma_j \sigma_i$ is the relation $\sigma_1\sigma_3=\sigma_3\sigma_1$, which by the defining relation~\eqref{A14} is equivalent to the relation~\eqref{A27}. Similarly, from the commuting relations $\tau_i \sigma_j = \sigma_j \tau_i$ and $\tau_i \tau_j = \tau_j \tau_i$ (with $|i -j| >1$) we kept only the relations $\tau_1 \sigma_3 = \sigma_3 \tau_1$ and, respectively, $\tau_1 \tau_3 = \tau _3 \tau_1$, which are represented by the relations~\eqref{A28} and~\eqref{A29}, respectively. We will show that all of the other commuting relations follow from their corresponding base case relations and the virtual relations. 

In addition, we remark that relations~\eqref{A24} and~\eqref{A24b} represent the braid relations $\sigma_1 \sigma_2 \sigma_1 = \sigma_2 \sigma_1 \sigma_2$ and $\tau_1 \sigma_2 \sigma_1 = \sigma_2 \sigma_1 \tau_2$. Thus these two relations are the base cases for $\sigma_i \sigma_j \sigma _i = \sigma_j \sigma_i \sigma_j$ and $\tau_i \sigma_j \sigma _i = \sigma_j \sigma_i \tau_j$, respectively, where $|i-j| = 1$.

In the statements to follow we show that each of the relations in the original presentation for $VSB_n$ hold; therefore, we prove Theorem~\ref{thm:reduced}. The first lemma deals with preparatory identities (this statement was given in~\cite{KL1}, and thus we only provide a sketch of its proof). In each of our proofs below we will underline the portion of the relation that we will work with next. 
 
 %%%% Preparatory lemma
 \begin{lemma} The following equality holds for all $|i-j| \geq 2$:
\begin{eqnarray} v_iv_{i-1}\ldots { v_{j+1}{v_j} v_{j+1}}\ldots v_{i-1} v_i =  v_j v_{j+1}\ldots v_{i-1}{v_i} v_{i-1}\ldots    v_{j+1} v_j. \label{A16} 
 \end{eqnarray}  
 \end{lemma}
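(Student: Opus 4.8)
The plan is to prove the identity \eqref{A16} by induction on the ``gap'' $k = |i-j|$, since the relation is a statement about how a far-apart pair of virtual generators can be commuted past an intervening block of virtual generators. Without loss of generality I would assume $i > j$, so the claim reads
\[
v_i v_{i-1}\cdots v_{j+1} v_j v_{j+1}\cdots v_{i-1} v_i = v_j v_{j+1}\cdots v_{i-1} v_i v_{i-1}\cdots v_{j+1} v_j,
\]
and I would set $k = i - j \geq 2$ as the induction variable.

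The base case is $k = 2$, i.e. $i = j+2$, where the identity becomes $v_{j+2} v_{j+1} v_j v_{j+1} v_{j+2} = v_j v_{j+1} v_{j+2} v_{j+1} v_j$. This is the first genuinely nontrivial instance, and I would verify it directly using only the established virtual relations: the braid-type relation \eqref{A18} for adjacent indices and the far-commuting relation \eqref{A19} for indices differing by more than $1$. The computation threads $v_j$ and $v_{j+2}$ (which commute with each other since $|(j+2)-j| = 2 > 1$) through the central $v_{j+1}$ using \eqref{A18} applied to the adjacent triples $\{v_j, v_{j+1}\}$ and $\{v_{j+1}, v_{j+2}\}$.

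For the inductive step, I would assume \eqref{A16} holds for all gaps up to $k-1$ and prove it for gap $k$. The strategy is to peel off the outermost generator $v_i$ on each end: write the left-hand side as $v_i \bigl( v_{i-1}\cdots v_{j+1} v_j v_{j+1}\cdots v_{i-1} \bigr) v_i$, recognize the parenthesized middle as an instance of the identity with gap $k-1$ (indices from $j$ to $i-1$), apply the induction hypothesis to rewrite it, and then use the adjacent relation \eqref{A18} together with the far-commuting relation \eqref{A19} to slide the two flanking copies of $v_i$ into their target positions. The bookkeeping of which generators commute past $v_i$ (all those with index $\leq i-2$) versus which require the braid relation (only $v_{i-1}$) is the crux of the manipulation.

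The main obstacle I anticipate is purely combinatorial rather than conceptual: keeping the long words aligned and correctly tracking, at each rewrite, exactly which adjacent pair the relation \eqref{A18} is being applied to and which distant pairs are being commuted via \eqref{A19}. Since the paper notes this lemma appears already in~\cite{KL1}, I would present the base case and the inductive mechanism explicitly but compress the routine word-shuffling, underlining at each step (per the paper's stated convention) the subword being acted upon, so that the reader can reconstruct the full chain of equalities without my writing out every intermediate word in full.
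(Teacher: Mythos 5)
Your proposal is correct and amounts to essentially the paper's own argument: both proofs are inductions on the gap $|i-j|$ using only the relations \eqref{A18} and \eqref{A19}, and both reduce a gap-$k$ instance to a gap-$(k-1)$ instance. The only difference is organizational --- the paper runs the rewriting from the inside out, applying \eqref{A18} to the central triple $v_{j+1}v_jv_{j+1}$ and commuting the resulting copies of $v_j$ out to the flanks (its chain of equalities ending in $\vdots$ is exactly your induction read from the other end, recursing on the indices $[j+1,i]$), whereas you peel the outer copies of $v_i$, invoke the inductive hypothesis on the inner word over $[j,i-1]$, and finish with a single application of \eqref{A18} to $v_iv_{i-1}v_i$.
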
 
\begin{proof} Let $|i-j| \geq 2$. Then, we have:
\begin{align*}
 v_iv_{i-1}\ldots \underline{ v_{j+1}{v_j} v_{j+1}}\ldots v_{i-1} v_i  \stackrel{(\ref{A18})}{=}&   v_iv_{i-1}\ldots{ \underline{v_{j}}{v_{j+1}} \underline{v_{j}}}\ldots v_{i-1}v_i   \\
    \stackrel{(\ref{A19})}{=} &    {v_j} v_iv_{i-1}\ldots \underline{ {v_{j+2}}  {v_{j+1}} { v_{j+2}}} \ldots v_{i-1} v_i {v_j}                                                                                                                                                  \\    
 \stackrel{(\ref{A18})}{ =}&    v_{j}v_ i v_{i-1}\dots \underline{v_{j+1}}v_{j+2}\underline{v_{j+1}}\ldots  {v_{i-1}} {v_{i}}v_j \\  
\stackrel{(\ref{A19})}{ =}&   v_{j} v_{j+1}v_ i v_{i-1}\dots \underline{{v_{j+3}}v_{j+2}{v_{j+3}}}\ldots  {v_{i-1}} {v_{i}}v_{j+1} v_{j}\\    
 \stackrel{(\ref{A18})}{ =}&    v_{j}v_{j+1}v_ i v_{i-1}\dots \underline{v_{j+2}}v_{j+3}\underline{v_{j+2}}\ldots  {v_{i-1}} {v_{i}}v_{j+1}v_j \\  
    &\,\,\,\,\,\,\,\,\,\,\,\,\,\,\,\,\,\,\,\,\,\,\,\,\,\,\,\,\,\,\,\,\,\,\,\,\,\,\,\,\,\,\,\,\,\,\,\,\,\,\,\,\,\,\,\,\,\,\,\,\,\vdots\\
        = & v_j v_{j+1}\ldots v_{i-1}{v_i} v_{i-1}\ldots v_{j+1} v_j. 
      \end{align*}
\end{proof}

%%%%  Lemma 2
\begin{lemma}\label{sigv-tauv}
The commuting relations $\sigma_i v_j  =  v_j \sigma_i$ and $\tau_i v_j  =  v_j \tau_i$ hold for all $|i-j|>1$.
\end{lemma}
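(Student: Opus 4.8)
The plan is to prove both commuting relations by reducing the general case $|i-j|>1$ to the base cases already retained in the reduced presentation, namely $\sigma_1 v_i = v_i \sigma_1$ and $\tau_1 v_i = v_i \tau_1$ for $i \geq 3$ (relations~\eqref{A23}), using the defining relations~\eqref{A14} and~\eqref{A15} together with the purely virtual relations~\eqref{A18}, \eqref{A19}, \eqref{A21} and the preparatory identity~\eqref{A16}. Since the two statements are completely parallel (one uses $\sigma$ via~\eqref{A14}, the other uses $\tau$ via~\eqref{A15}, and the base cases~\eqref{A23} are stated for both), I would write out the argument in full for $\sigma_i v_j = v_j \sigma_i$ and then remark that the $\tau$ case follows verbatim.

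First I would substitute the defining relation~\eqref{A14} for $\sigma_i$ (writing $i = (i-1)+1$ so that $\sigma_i$ is expressed in terms of $\sigma_1$ conjugated by a word $W$ in the $v_k$'s, where $W = (v_{i-1}\ldots v_1)(v_i \ldots v_2)$). The goal then becomes showing that $v_j$ commutes past the entire conjugating word $W$ and the central $\sigma_1$. The key observation is that $\sigma_1$ only ever ``sees'' the virtual generator at the center, and the base case~\eqref{A23} lets $\sigma_1$ commute with any $v_k$ for $k \geq 3$; so the crux is purely a statement about how $v_j$ interacts with the virtual word $W$. Concretely, I would split into the two cases $j < i-1$ and $j > i+1$ (the two ways $|i-j|>1$ can occur relative to the block of strands $\{i, i+1\}$ that $\sigma_i$ acts on), and in each case push $v_j$ through $W$ using only~\eqref{A18}, \eqref{A19}, and~\eqref{A21}, tracking where it lands relative to the central $\sigma_1$.

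The main obstacle I expect is the bookkeeping in the case where $v_j$ must be commuted through the long virtual word $W$ and emerge on the other side as the \emph{same} generator $v_j$ (so that it can be pulled out to the right and recombined with the untouched copy of $W$ on the far side). This is exactly the kind of braid-relation shuffle computed in identity~\eqref{A16}, so I would either invoke~\eqref{A16} directly or mimic its telescoping computation: repeatedly apply $v_k v_{k+1} v_k = v_{k+1} v_k v_{k+1}$ to slide $v_j$ past adjacent generators, and apply $v_k v_\ell = v_\ell v_k$ (for $|k-\ell|>1$) to slide it freely past distant ones, until it has traversed $W$. When $j$ is far enough from the support of $W$ that $|j-k|>1$ for every $v_k$ appearing in $W$, the commutation is immediate from~\eqref{A19} and the whole argument collapses; the genuinely interesting subcase is when $j$ is adjacent to some index in $W$, which is where~\eqref{A18} and the telescoping are needed.

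Once $v_j$ has been carried through the left half of $W$, past $\sigma_1$ via~\eqref{A23}, and through the right half of $W$, I would collect the resulting word and recognize it, again via the defining relation~\eqref{A14}, as $v_j \sigma_i$ (or, reading from the other side, conclude $\sigma_i v_j = v_j \sigma_i$). Finally I would note that replacing~\eqref{A14} by~\eqref{A15} and the first base case in~\eqref{A23} by the second throughout yields $\tau_i v_j = v_j \tau_i$ with an identical computation, since $\tau_1$ and $\sigma_1$ play structurally the same role inside their respective defining relations and both satisfy a commuting base case with $v_k$ for $k \geq 3$. This completes the reduction and hence the lemma.
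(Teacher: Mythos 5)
Your proposal is correct and takes essentially the same approach as the paper: substitute the defining relation \eqref{A14} (resp.\ \eqref{A15}), split into the cases $j \geq i+2$ (immediate commutation by \eqref{A19} and \eqref{A23}) and $j \leq i-2$ (telescoping the virtual generator through the conjugating word via \eqref{A18}/\eqref{A19}, passing the central crossing via \eqref{A23}). The only difference is cosmetic: you write out the $\sigma$ case in full and cite symmetry for $\tau$, whereas the paper cites~\cite{KL1} for the $\sigma$ case and writes out the $\tau$ case.
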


\begin{proof} The first set of relations were proved in~\cite[Lemma 1]{KL1}. We provide here a similar proof for the second set of relations only.
By the defining relation~\eqref{A15}, we have:
  \[\tau_i v_j =(v_{i-1}\ldots
v_2v_1)\,(v_i\ldots v_3v_2)\,\tau_1\, (v_2v_3\ldots v_i)\,(v_1v_2\ldots v_{i-1})\, v_j.\]  
Since $|i-j|>1$, either $j\geq i+2$ or $j\leq i-2$. If $j\geq i+2$, then in the above
expression $v_j$  commutes  with all generators, thus $\tau_i v_j  =  v_j \tau_i$. 
 If $j\leq i-2$ we have:
\begin{align*}
\tau_i v_j  \stackrel{\eqref{A15}}{=}&  (v_{i-1}\ldots v_2v_1)(v_i\ldots v_3v_2)\tau_1 (v_2v_3\ldots v_i)(v_1v_2\ldots v_{i-1})\underline{v_j}  \\
\stackrel{(\ref{A19})}{=}&  (v_{i-1}\ldots v_1)(v_i\ldots v_2)\tau_1 (v_2v_3\ldots v_i)(v_1v_2 \ldots v_{j-1}\underline{v_j v_{j+1} v_j}  v_{j+2} \ldots v_{i-1})  \\ 
\stackrel{(\ref{A18})}{=}&  (v_{i-1}\ldots v_1)(v_i\ldots v_2)\tau_1 (v_2v_3\ldots v_{j+1} v_{j+2} \ldots v_i)(v_1v_2 \ldots v_{j-1} \underline{v_{j+1}} v_j \cdot 
\\& v_{j+1} v_{j+2} \ldots v_{i-1}) \\
\stackrel{(\ref{A19})}{=}&  (v_{i-1}\ldots v_1)(v_i\ldots v_3v_2)\tau_1 (v_2v_3\ldots  v_j \underline{ v_{j+1} v_{j+2} v_{j+1}}  v_{j+3} \ldots v_i) (v_1v_2 \ldots v_{i-1}) \\
\stackrel{(\ref{A18})}{=}&  (v_{i-1}\ldots v_1)(v_i\ldots v_{j+2} v_{j+1} v_j \ldots v_3v_2)\tau_1(v_2v_3\ldots  v_j\underline{ v_{j+2}} v_{j+1} v_{j+2} v_{j+3} \ldots v_i) 
\\& (v_1v_2 \ldots v_{i-1}) \\
 \displaystyle  \mathop{=}_{\eqref{A19}}^{\eqref{A23}}&  (v_{i-1}\ldots v_1)(v_i\ldots v_{j+3} \underline{ v_{j+2}  v_{j+1} v_{j+2}} v_j\ldots v_2) \tau_1 (v_2\ldots  v_i) (v_1 \ldots v_{i-1}) \\
\stackrel{(\ref{A18})}{=}&  (v_{i-1}\ldots  v_{j+1} v_j v_{j-1} \ldots v_2v_1)(v_i\ldots  v_{j+3} \underline{ v_{j+1}} v_{j+2} v_{j+1} v_j\ldots v_2)\tau_1 (v_2\ldots  v_i) \\& (v_1 \ldots v_{i-1}) \\
\stackrel{(\ref{A19})}{=}&  (v_{i-1}\ldots v_{j+2} \underline{v_{j+1} v_j v_{j+1} } v_{j-1} \ldots v_2v_1)  (v_i\ldots v_3v_2)\tau_1 (v_2\ldots  v_i) (v_1 \ldots v_{i-1}) \\
\stackrel{(\ref{A18})}{=}& (v_{i-1}\ldots v_{j+2} \underline{v_j }  v_{j+1}  v_j  v_{j-1} \ldots v_2v_1)  (v_i\ldots v_3v_2) \tau_1(v_2\ldots  v_i) (v_1 \ldots v_{i-1}) \\
\stackrel{(\ref{A19})}{=}&  v_j (v_{i-1}\ldots  v_1)  (v_i\ldots v_2) \tau_1 (v_2\ldots  v_i) (v_1 \ldots v_{i-1}) \\
 \displaystyle  \stackrel{\eqref{A14}}{=} &  v_j \tau_i.  
\end{align*} 
Hence, the statement holds.\end{proof}

%Lemma 3
\begin{lemma}\label{sig-tau}
The commuting braid relations below hold for all $|i-j|>1$:
\[ \sigma_i  \sigma_j = \sigma_j \sigma_i, \,\, \tau_i  \tau_j = \tau_j \tau_i\,\, \, \text{and} \, \,\, \sigma_i  \tau_j = \tau_j \sigma_i.\]
\end{lemma}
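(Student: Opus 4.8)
The plan is to reduce each of the three commuting relations to the \emph{base case} that is already available in the reduced presentation, using the defining relations \eqref{A14} and \eqref{A15} together with Lemma~\ref{sigv-tauv}. The key observation is that, for $|i-j|>1$, both generators in each product can be expressed via the defining relations as conjugates of $\sigma_1^{\pm1}$ or $\tau_1$ by products of the $v_k$'s. Thus the entire commuting relation is, after conjugation, equivalent to a base-case relation in which one of the two generators has been pushed down to index $1$ (or $2$, $3$). Concretely, the retained relations \eqref{A27}, \eqref{A28}, and \eqref{A29} are precisely the base cases $\sigma_1\sigma_3=\sigma_3\sigma_1$, $\tau_1\sigma_3=\sigma_3\tau_1$, and $\tau_1\tau_3=\tau_3\tau_1$, so the goal is to show each general commuting relation follows from the corresponding base case.

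First I would treat $\sigma_i\sigma_j=\sigma_j\sigma_i$ with $|i-j|>1$; without loss of generality take $j>i$, so $j\geq i+2$. Using Lemma~\ref{sigv-tauv}, $\sigma_i$ commutes with each $v_k$ appearing in the defining-relation expression for $\sigma_j$ whenever $|i-k|>1$, which lets me slide $\sigma_i$ through the $v$-words flanking $\sigma_1^{\pm1}$ in the expansion of $\sigma_j$. This transports the problem to a product of the form $\sigma_i\,(\text{$v$-word})\,\sigma_1^{\pm1}\,(\text{$v$-word})$, and after conjugating by the same $v$-word it collapses to a commuting relation between two real generators whose indices differ by exactly the gap that was reduced—ultimately to $\sigma_1\sigma_3=\sigma_3\sigma_1$, which is \eqref{A27}. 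The mixed relation $\sigma_i\tau_j=\tau_j\sigma_i$ is handled identically, expanding $\tau_j$ via \eqref{A15} and using the $\tau$-commutation half of Lemma~\ref{sigv-tauv}, reducing to the base case $\tau_1\sigma_3=\sigma_3\tau_1$ recorded as \eqref{A28}; likewise $\tau_i\tau_j=\tau_j\tau_i$ reduces to \eqref{A29}.

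The mechanics are an induction (or telescoping) on the size of the gap $|i-j|$: the defining relations let me decrement the larger index by one while conjugating by a single $v$-generator, and Lemma~\ref{sigv-tauv} guarantees the smaller-indexed generator passes through that $v$ unchanged as long as the gap stays $>1$. When the gap first reaches its minimal admissible value of $2$, we are exactly at the base-case relation assumed in the presentation. The main obstacle I anticipate is bookkeeping: the $v$-words in \eqref{A14} and \eqref{A15} are long, nested products, and one must verify carefully that at each step the generator being slid past is separated in index by more than one from the generator one is commuting, so that Lemma~\ref{sigv-tauv} (and not some unavailable braid relation) applies. A clean way to organize this is to conjugate both sides of the target relation simultaneously by the $v$-word that expresses, say, $\sigma_j$ or $\tau_j$ as a conjugate of an index-$3$ generator, thereby turning the general statement into the single retained base case in one stroke rather than stepping index-by-index; this avoids the forbidden use of any $\sigma$-$\sigma$ or $\sigma$-$\tau$ braid relation with gap $1$, which is not part of the hypotheses.
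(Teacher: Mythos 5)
Your overall strategy---reducing each commuting relation to the retained base cases \eqref{A27}, \eqref{A28}, \eqref{A29} by conjugating with $v$-words, using the defining relations \eqref{A14}, \eqref{A15} and Lemma~\ref{sigv-tauv}---is the right idea, and it is the same spirit as the paper's proof (which rewrites $\tau_j\sigma_i$ and $\sigma_i\tau_j$ into a common normal form and invokes \eqref{A28}, in the conjugated form $\tau_3\sigma_1=\sigma_1\tau_3$, at the key step). But your induction step fails exactly where it matters. The defining relations give $\sigma_j = v_{j-1}v_j\,\sigma_{j-1}\,v_jv_{j-1}$, so decrementing the larger index means conjugating by $v_{j-1}$ and $v_j$; for the smaller-indexed generator $\sigma_i$ (or $\tau_i$) to pass through these untouched, Lemma~\ref{sigv-tauv} requires $|i-(j-1)|>1$, i.e.\ $j-i\geq 3$, \emph{not} $j-i\geq 2$ as you claim (``as long as the gap stays $>1$''). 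Hence your telescoping cannot go below gap three: it leaves you at the gap-two pairs $(\sigma_i,\sigma_{i+2})$, $(\tau_i,\sigma_{i+2})$, etc., for arbitrary $i\geq 2$, and these are \emph{not} base cases, since the presentation retains only the relations at indices $(1,3)$. Your proposed ``one stroke'' shortcut suffers from the same defect: the $v$-word expressing $\sigma_j$ as a conjugate of an index-$3$ generator contains $v_k$ for every $3\leq k\leq j$, in particular $v_{i-1},v_i,v_{i+1}$, so it does not commute with $\sigma_i$ unless $i=1$, and conjugating by it destroys the other factor rather than fixing it.

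The missing idea is to shift \emph{both} indices simultaneously. Conjugation by $W=(v_jv_{j-1})(v_iv_{i-1})$ sends $\sigma_i\mapsto\sigma_{i-1}$ and $\sigma_j\mapsto\sigma_{j-1}$ (likewise for $\tau$'s), and every commutation this requires has index gap at least two, so it is licensed by Lemma~\ref{sigv-tauv} together with \eqref{A14}, \eqref{A15}, \eqref{A19}, \eqref{A21}; iterating brings the smaller index down to $1$, after which your decrementing of the larger index does work, because $\sigma_1,\tau_1$ commute with all $v_k$, $k\geq 3$, by \eqref{A23}. A second, smaller gap: in the mixed case there is no ``without loss of generality,'' since $\sigma$ and $\tau$ are different letters. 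The pair with $\sigma$-index smaller reduces to $\sigma_1\tau_3=\tau_3\sigma_1$, while the pair with $\tau$-index smaller reduces to $\tau_1\sigma_3=\sigma_3\tau_1$; only the latter is \eqref{A28}, and the former must be derived from it by multiplying \eqref{A28} on both sides by $v_2v_3v_1v_2$ and using $v_k^2=1_n$ and $v_1v_3=v_3v_1$ (exactly as the paper does mid-proof). With these two repairs your argument goes through and is arguably cleaner than the paper's long normal-form computation; as written, however, it never reaches the base cases it appeals to.
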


\begin{proof} The same type of proof can be used to show that the given three sets of commuting braid relations hold. For brevity, we will prove here only the last set of relations. Our proof is somewhat different than the one given in~\cite[Lemma 3]{KL1} for $\sigma_i  \sigma_j = \sigma_j \sigma_i$.

Without loss of generality, assume $j>i$. Specifically, suppose that $j \geq i+2$. Then,
\begin{align*}
1_n = \,\,\, &  1_n 1_n \\ 
\stackrel{\eqref{A21}}{=}& (v_{i-1} \dots v_1)(v_1 \dots v_{i-1}) \underline{(v_{j-1} \dots v_{i+1})} (v_i \dots v_1)(v_1 \dots v_i)(v_{i+1} \dots v_{j-1}) \\
\stackrel{\eqref{A19}}{=} &  (v_{i-1} \dots v_1) (v_{j-1} \dots v_{i+1}) \underline{(v_1 \dots v_{i-1} v_i v_{i-1} \dots v_1) }(v_1 \dots v_i)(v_{i+1} \dots v_{j-1}) \\
\stackrel{\eqref{A16}}{=}& (v_{i-1} \dots v_1) (v_{j-1} \dots v_{i+1}) (v_i \dots v_1 \dots v_i) (v_1 \dots v_i)(v_{i+1} \dots v_{j-1}) \\
= \,\,\, & (v_{i-1} \dots v_1) (v_{j-1} \dots v_2)( v_1v_2 \dots v_i)(v_1 \dots v_{j-1}) \\
=\,\,\, &  (v_{i-1} \dots v_1)(v_{j-1} \dots v_{i+2}) \underline{(1_n)} (v_{i+1} \dots v_2) (v_1 \dots v_i)(v_1 \dots v_{j-1}) \\
\stackrel{\eqref{A21}}{=}& (v_{i-1} \dots v_1)(v_{j-1} \dots v_{i+2}) \underline{(v_i \dots v_2) } (v_2 \dots v_i) (v_{i+1} \dots v_2) (v_1 \dots v_i)(v_1 \dots v_{j-1}) \\
\stackrel{\eqref{A19}}{=}&  (v_{i-1} \dots v_1)(v_i \dots v_2)(v_{j-1} \dots v_{i+2}) (v_2 \dots v_i) (v_{i+1} \dots v_2) (v_1 \dots v_i)(v_1 \dots v_{j-1}) \\
= \,\,\, &  (v_{i-1} \dots v_1)(v_i \dots v_2)(v_{j-1} \dots v_{i+2}) \underline{(v_2 \dots v_i v_{i+1} v_i \dots v_2) } (\underline{v_1} \dots v_i)(v_1 \dots v_{j-1}) \\
\stackrel{\eqref{A16}}{=}&  (v_{i-1} \dots v_1)(v_i \dots v_2)(v_{j-1} \dots v_{i+2}) (v_{i+1} \dots  v_{2}  \dots v_{i+1} )  (v_1 \dots v_i)(v_1 \dots v_{j-1}) \\
= \,\,\, & (v_{i-1} \dots v_1)(v_i \dots v_2)(v_{j-1} \dots  v_2 ) (v_3  \dots v_{i+1} )  (\underline{v_1} \dots v_i)(v_1 \dots v_{j-1}) \\
\stackrel{\eqref{A19}}{=}& (v_{i-1} \dots v_1)(v_i \dots v_2)(v_{j-1} \dots  v_2 v_1) \underline{(1_n)} (v_3  \dots v_{i+1} )  (v_2 \dots v_i)(v_1 \dots v_{j-1}) \\
\stackrel{\eqref{A21}}{=}&  (v_{i-1} \dots v_1)(v_i \dots v_2)(v_{j-1} \dots  v_2 v_1) (v_j \dots v_{i+3})\underline{(v_{i+3} \dots v_j)} \cdot\\
& (v_3  \dots v_{i+1} )  (v_2 \dots v_i)(v_1 \dots v_{j-1})
\end{align*}
If $j = i+2$ then $v_{i+3}\ldots v_j = \emptyset$. If $j \geq i+3$, then $(v_{i+3}\ldots v_j)$ commutes with $v_1, v_2, \dots, v_{i+1}$. In either case, we have the following:
\begin{align*}
1_n = \,\,\, &  (v_{i-1} \dots v_1)(v_i \dots v_2)(v_{j-1} \dots  v_2 v_1) (v_j \dots v_{i+3}) (v_3  \dots v_{i+1} )  (v_2 \dots v_i) \cdot\\
&(v_{i+3} \dots v_j)(v_1 \dots v_{j-1}) \\
=\,\,\, & (v_{i-1} \dots v_1)(v_i \dots v_2)(v_{j-1} \dots  v_2 v_1) (v_j \dots v_{i+3}) (v_3  \dots v_{i+1} ) \underline{(1_n)} (v_2 \dots v_i) \cdot\\
&(v_{i+3} \dots v_j)(v_1 \dots v_{j-1}) \\
 \stackrel{\eqref{A21}}{=}&  (v_{i-1} \dots v_1)(v_i \dots v_2)(v_{j-1} \dots  v_2 v_1) (v_j \dots v_{i+3}) (v_3  \dots v_{i+1} ) \cdot \\ 
& (v_{i+2}  \underline{v_{i+2}}) (v_2 \dots v_i)(v_{i+3} \dots v_j)(v_1 \dots v_{j-1}) \\
\stackrel{\eqref{A19}}{=}& (v_{i-1} \dots v_1)(v_i \dots v_2)(v_{j-1} \dots  v_2 v_1) (v_j \dots v_{i+3}) (v_3  \dots v_{i+1} )\cdot \\
& (v_{i+2}) (v_2 \dots v_i)\underline{(1_n)}(v_{i+2} \dots v_j)(v_1 \dots v_{j-1}) \\
\stackrel{\eqref{A21}}{=}& (v_{i-1} \dots v_1)(v_i \dots v_2)(v_{j-1} \dots  v_2 v_1) (v_j \dots v_{i+3}) (v_3  \dots v_{i+1} ) \cdot\\
& (v_{i+2}) \underline{ (v_2 \dots v_i)(v_{i+1} v_i \dots v_2) }  (v_2  \dots v_{i+1})(v_{i+2} \dots v_j)(v_1 \dots v_{j-1}) \\
\stackrel{\eqref{A16}}{=}& (v_{i-1} \dots v_1)(v_i \dots v_2)(v_{j-1} \dots  v_2 v_1) (v_j \dots v_{i+3}) (v_3  \dots v_{i+1} ) \cdot \\
& (v_{i+2}) (v_{i+1} \dots v_2  \dots v_{i+1})  (v_2  \dots v_j)  (v_1 \dots v_{j-1}) \\ 
=& (v_{i-1} \dots v_1)(v_i \dots v_2)(v_{j-1} \dots  v_2 v_1) (v_j \dots v_{i+3}) \underline{ (v_3  \dots v_{i+1} v_{i+2} v_{i+1} \dots v_3 } v_2  \dots v_{i+1}) \cdot \\
& (v_2  \dots v_j)(v_1 \dots v_{j-1}) \\ 
\stackrel{\eqref{A16}}{=}& (v_{i-1} \dots v_1)(v_i \dots v_2)(v_{j-1} \dots  v_2 v_1) (v_j \dots v_{i+3}) (v_{i+2}  \dots  v_3  \dots v_{i+2}) \cdot \\
&( v_2  \dots v_{i+1}) (v_2 \dots v_j)(v_1 \dots  v_{j-1}) \\
=\,\,\, & (v_{i-1} \dots v_1)(v_i \dots v_2)(v_{j-1} \dots  \underline{v_2 v_1}) (v_j \dots v_4) (v_3  \dots v_{i+2}) \cdot \\
&( v_2  \dots v_{i+1}) (v_2 \dots v_j)(v_1 \dots  v_{j-1}) \\
\stackrel{\eqref{A19}}{=}&  (v_{i-1} \dots v_1)(v_i \dots v_2)(v_{j-1} \dots  v_3) (v_j \dots v_4) (v_2 v_3  \dots v_{i+2}) ( v_1 v_2  \dots v_{i+1})\cdot
 \\& (v_2 \dots v_j)(v_1 \dots  v_{j-1}). 
 \end{align*}
 
 Then, we have:
 \begin{align*}
 \tau_j \underline{\sigma_i} \stackrel{\eqref{A14}}{=}& \tau_j (v_{i-1} \dots v_1)(v_i \dots v_2)\sigma_1 \underline{(v_2 \dots v_i)(v_1 \dots v_{i-1}) (v_{i-1} \dots v_1)(v_i \dots v_2)} \cdot \\
 &(v_{j-1} \dots  v_3) (v_j \dots v_4) (v_2 v_3  \dots v_{i+2}) ( v_1 v_2   \dots v_{i+1}) (v_2  \dots v_j)(v_1 \dots v_{j-1}) \\
\stackrel{\eqref{A21}}{=}& \underline{ \tau_j} (v_{i-1} \dots v_1)(v_i \dots v_2)\underline{\sigma_1} (v_{j-1} \dots  v_3) (v_j \dots v_4) (v_2 v_3  \dots v_{i+2}) \cdot
 \\& ( v_1 v_2  \dots v_{i+1}) (v_2 \dots v_j)  (v_1 \dots v_{j-1}) \\
\stackrel{\eqref{A23}}{=}&  (v_{i-1} \dots v_1)(v_i \dots v_2) \tau_j  (v_{j-1} \dots  v_3)  (1_n) (v_j \dots v_4) \sigma_1 (v_2 v_3  \dots v_{i+2}) \cdot \\
&( v_1 v_2  \dots v_{i+1}) (v_2  \dots v_j)(v_1 \dots v_{j-1}) \\
= \,\,\, &  (v_{i-1} \dots v_1)(v_i \dots v_2) \tau_j  (v_{j-1} \dots  v_3)  (v_2 \underline{v_2}) (v_j \dots v_4) \sigma_1 (v_2 v_3  \dots v_{i+2})  \cdot
\\&( v_1 v_2  \dots v_{i+1}) (v_2  \dots v_j) (v_1 \dots v_{j-1}) \\ 
\stackrel{\eqref{A19}}{=}&  (v_{i-1} \dots v_1)(v_i \dots v_2) \tau_j  (v_{j-1} \dots  v_3)  (v_2) (v_j \dots v_4)\underline{ (1_n)} v_2 \sigma_1 (v_2 v_3  \dots v_{i+2}) \cdot \\
&( v_1 v_2  \dots v_{i+1})  (v_2 \dots  v_j)(v_1 \dots v_{j-1}) \\ 
= \,\,\, &  (v_{i-1} \dots v_1)(v_i \dots v_2) \tau_j  (v_{j-1} \dots  v_3)  (v_2) (v_j \dots v_4) (v_3 v_3) v_2 \sigma_1 (v_2 v_3  \dots v_{i+2})  \cdot
 \\& ( v_1 v_2  \dots v_{i+1})(v_2 \dots v_j)  (v_1 \dots v_{j-1}) \\
= \,\,\, &  (v_{i-1} \dots v_1)(v_i \dots v_2) \tau_j  (v_{j-1} \dots  v_3)  (v_2)\underline{(1_n)} (v_j \dots v_3) v_3 v_2 \sigma_1 (v_2 v_3  \dots v_{i+2})  \cdot
 \\& ( v_1 v_2  \dots v_{i+1})(v_2  \dots v_j)(v_1 \dots v_{j-1}) \\
= \,\,\, &  (v_{i-1} \dots v_1)(v_i \dots v_2) \tau_j  (v_{j-1} \dots  v_3)  (v_2) (v_1 \underline{v_1}) (v_j \dots v_3) v_3 v_2 \sigma_1 (v_2 v_3  \dots v_{i+2}) \cdot \\
 &( v_1 v_2  \dots v_{i+1})(v_2  \dots v_j)(v_1 \dots v_{j-1}) \\
\stackrel{\eqref{A19}}{=}&  (v_{i-1} \dots v_1)(v_i \dots v_2) \tau_j  (v_{j-1} \dots v_2 v_1) (v_j \dots v_3)  \underline{(1_n)}v_1 v_3 v_2 \sigma_1 (v_2 v_3  \dots v_{i+2}) \cdot
 \\& ( v_1 v_2  \dots v_{i+1})  (v_2  \dots v_j)(v_1 \dots v_{j-1}) \\
= \,\,\,&  (v_{i-1} \dots v_1)(v_i \dots v_2) \tau_j  (v_{j-1} \dots v_2 v_1) (v_j \dots v_3) (v_2 v_2)v_1 v_3 v_2 \sigma_1 (v_2 v_3  \dots v_{i+2}) \cdot \\
& ( v_1 v_2  \dots v_{i+1}) (v_2 \dots v_j)(v_1 \dots v_{j-1}) \\
=\,\,\, &  (v_{i-1} \dots v_1)(v_i \dots v_2) \underline{\tau_j}  (v_{j-1} \dots v_2 v_1) (v_j \dots v_3 v_2 ) v_2 v_1 v_3 v_2 \sigma_1 (v_2 v_3  \dots v_{i+2}) \cdot \\
&( v_1 v_2  \dots v_{i+1}) (v_2 \dots v_j)(v_1 \dots v_{j-1}) \\
\stackrel{\eqref{A15}}{=}&  (v_{i-1} \dots v_1)(v_i \dots v_2) (v_{j-1}\ldots v_1)(v_{j}\ldots v_2)\tau_1 \underline{ (v_2 \ldots v_{j})(v_1 \ldots v_{j-1})} \cdot \\
&\underline{(v_{j-1} \dots  v_1) (v_j \dots  v_2 ) }v_2 v_1 v_3 v_2 \sigma_1 (v_2 v_3  \dots v_{i+2}) ( v_1 v_2  \dots v_{i+1}) (v_2  \dots v_j)(v_1 \dots v_{j-1}) \\ 
\stackrel{\eqref{A21}}{=}& \underline{(1_n)} (v_{i-1} \dots v_1)(v_i \dots v_2) (v_{j-1}\ldots v_1)(v_{j}\ldots v_2)\tau_1 v_2 v_1 v_3 v_2 \sigma_1 \cdot
 \\& (v_2 v_3  \dots v_{i+2})  ( v_1 v_2  \dots v_{i+1}) (v_2  \dots v_j)(v_1 \dots v_{j-1}) \\
\stackrel{\eqref{A21}}{=}& (v_{j-1} \dots v_{i+2}) \underline{(v_{i+2} \dots v_{j-1})} (v_{i-1} \dots v_1)(v_i \dots v_2) (v_{j-1}\ldots v_1)(v_{j}\ldots v_2)\cdot \\
& \tau_1 v_2 v_1 v_3 v_2 \sigma_1(v_2 v_3  \dots v_{i+2}) ( v_1 v_2  \dots v_{i+1}) (v_2 \dots v_j)(v_1 \dots v_{j-1}) \\
\stackrel{\eqref{A19}}{=}& (v_{j-1} \dots v_{i+2})  (v_{i-1} \dots v_1)(v_i \dots v_2) (v_{i+2} \dots v_{j-1}) (v_{j-1}\ldots v_1)(v_{j}\ldots v_2)\cdot \\
&\tau_1 v_2 v_1 v_3 v_2 \sigma_1  (v_2 v_3  \dots v_{i+2}) ( v_1 v_2  \dots  v_{i+1}) (v_2  \dots v_j)(v_1 \dots v_{j-1}) \\
= \,\,\, & (v_{j-1} \dots v_{i+2})  (v_{i-1} \dots v_1)(v_i \dots v_2) \underline{(v_{i+2} \dots v_{j-1}) (v_{j-1}\ldots v_{i+2}) }(v_{i+1} \dots v_1)\cdot \\
&(v_{j}\ldots v_2)  \tau_1 v_2 v_1 v_3 v_2 \sigma_1 (v_2 v_3 \dots v_{i+2}) ( v_1 v_2  \dots v_{i+1}) (v_2 \dots v_j)(v_1 \dots v_{j-1}) \\ 
\stackrel{\eqref{A19}}{=}& (v_{j-1} \dots v_{i+2}) \underline{(1_n)}  (v_{i-1} \dots v_1)(v_i \dots v_2) (v_{i+1} \dots v_1)(v_{j}\ldots v_2) \cdot \\
&\tau_1 v_2 v_1 v_3 v_2 \sigma_1 (v_2 v_3  \dots v_{i+2}) ( v_1 v_2  \dots v_{i+1}) (v_2  \dots v_j)(v_1 \dots v_{j-1}) \\
\stackrel{\eqref{A19}}{=}& (v_{j-1} \dots v_{i+2}) (v_j \dots v_{i+3})\underline{(v_{i+3} \dots v_j ) }  (v_{i-1} \dots v_1)(v_i \dots v_2) (v_{i+1} \dots v_1)\cdot \\
&(v_{j}\ldots v_2) \tau_1 v_2 v_1 v_3 v_2 \sigma_1 (v_2 v_3  \dots v_{i+2}) ( v_1 v_2  \dots v_{i+1}) (v_2  \dots v_j)(v_1 \dots v_{j-1}) \\
\stackrel{\eqref{A19}}{=}& (v_{j-1} \dots v_{i+2}) (v_j \dots v_{i+3})  (v_{i-1} \dots v_1)(v_i \dots v_2) (v_{i+1} \dots v_1)(v_{i+3}  \dots v_j ) \cdot \\
&(v_{j}\ldots v_2)\tau_1 v_2 v_1 v_3 v_2 \sigma_1 (v_2 v_3 v_4  \dots v_{i+2}) ( \underline{v_1 v_2 } \dots v_{i+1}) (v_2  \dots v_j)(v_1 \dots v_{j-1}) \\
\stackrel{\eqref{A19}}{=}& (v_{j-1} \dots v_{i+2}) (v_j \dots v_{i+3})  (v_{i-1} \dots v_1)(v_i \dots v_2) (v_{i+1} \dots v_1)\underline{(v_{i+3} \dots v_j )} \cdot \\
& \underline{(v_{j}\ldots v_{i+3} } \dots v_2)\tau_1 v_2 v_1 v_3 v_2  \sigma_1 v_2 v_3 v_1 v_2(v_4  \dots v_{i+2}) (  v_3  \dots v_{i+1})\cdot \\
& (v_2 \dots v_j)(v_1 \dots v_{j-1}) \\
\stackrel{\eqref{A21}}{=}& (v_{j-1} \dots v_{i+2}) (v_j \dots v_{i+3})  (v_{i-1} \dots v_1)(v_i \dots v_2) (v_{i+1} \dots v_1) (v_{i+2} \dots v_2) \cdot \\
& \tau_1 v_2 v_1 v_3 v_2 \sigma_1 v_2 v_3 v_1 v_2  (v_4  \dots v_{i+2}) (  v_3  \dots v_{i+1}) (v_2  \dots v_j)(v_1 \dots v_{j-1}) \\
=\,\,\, & (v_{j-1} \dots v_{i+2}) (v_j \dots v_{i+3})  (v_{i-1} \dots v_1)(v_i \dots v_2) (v_{i+1} \dots v_2 \underline{v_1}) (v_{i+2} \dots v_4) \cdot \\
& v_3 v_2\tau_1 v_2 v_1 v_3 v_2 \sigma_1 v_2 v_3 v_1 v_2(v_4   \dots v_{i+2}) (  v_3  \dots v_{i+1}) (v_2 \dots v_j)(v_1 \dots v_{j-1}) \\
\stackrel{\eqref{A19}}{=}& (v_{j-1} \dots v_{i+2}) (v_j \dots v_{i+3})  (v_{i-1} \dots v_1)(v_i \dots v_2) (v_{i+1} \dots v_3 \underline{ v_2 } ) (v_{i+2} \dots v_4 ) \cdot \\
& v_3 v_1 v_2\tau_1 v_2 v_1 v_3 v_2 \sigma_1 v_2 v_3 v_1  v_2(v_4  \dots v_{i+2}) (  v_3  \dots v_{i+1}) (v_2  \dots v_j)(v_1 \dots v_{j-1}) \\
\stackrel{\eqref{A19}}{=}& (v_{j-1} \dots v_{i+2}) (v_j \dots v_{i+3})  (v_{i-1} \dots v_1)(v_i \dots v_2) (v_{i+1} \dots v_3 ) (v_{i+2} \dots v_4 ) \cdot\\
&\underline{v_2 v_3 v_1 v_2\tau_1 v_2 v_1 v_3 v_2  \sigma_1} v_2 v_3 v_1 v_2  (v_4  \dots  v_{i+2}) (  v_3  \dots v_{i+1}) (v_2  \dots v_j)(v_1 \dots v_{j-1}). \\
\end{align*}
Recall now the relation \eqref{A28}: $\tau_1 (v_2 v_3 v_1 v_2 \sigma_1 v_2 v_1 v_3 v_2) = (v_2 v_3 v_1 v_2 \sigma_1 v_2 v_1 v_3 v_2) \tau_1$.
Multiplying this relation on the left and on the right by $v_2 v_3 v_1 v_2$ and using that $v_i^2 = 1$ for $i = 1,2,3$ and that $v_1v_3 = v_3v_1$, we obtain:
\[(v_2 v_3 v_1 v_2 \tau_1 v_2 v_1v_3 v_2) \sigma_1 = \sigma_1 (v_2 v_3 v_1 v_2 \tau_1 v_2 v_1v_3 v_2).  \]

Returning to the computations above and using the latter equality to replace the underlined product, we arrive at:
\begin{align*}
\tau_j \sigma_i =& (v_{j-1} \dots v_{i+2}) (v_j \dots v_{i+3})  (v_{i-1} \dots v_1)(v_i \dots v_2) (v_{i+1} \dots v_3 ) (v_{i+2} \dots v_4 )  \cdot
\\& \sigma_1 v_2v_3 v_1 v_2 \tau_1\underline{ v_2 v_1v_3 v_2v_2 v_3v_1 v_2}(v_4  \dots v_{i+2}) (  v_3  \dots v_{i+1}) (v_2  \dots v_j)(v_1 \dots v_{j-1}) \\
\stackrel{\eqref{A21}}{=}& (v_{j-1} \dots v_{i+2}) (v_j \dots v_{i+3})  (v_{i-1} \dots v_1)(v_i \dots v_2) (v_{i+1} \dots v_3 ) (v_{i+2} \dots v_4 ) \cdot
 \\& \sigma_1 v_2 v_3 v_1 v_2 \tau_1 (v_4  \dots v_{i+2}) (  v_3  \dots v_{i+1})  (v_2  \dots v_j)(v_1 \dots v_{j-1}).
\end{align*}

On the other hand, using the relations \eqref{A14}, \eqref{A15}, \eqref{A19}, \eqref{A23}, and \eqref{A16}, one can show the following equality:
 \begin{align*}
  \sigma_i \tau_j = &
 (v_{j-1} \dots v_{i+2}) (v_j \dots v_{i+3})  (v_{i-1} \dots v_1)(v_i \dots v_2) (v_{i+1} \dots v_3 ) (v_{i+2} \dots v_4 ) \cdot \\& \sigma_1 v_2 v_3 v_1 v_2 \tau_1 (v_4  \dots v_{i+2}) (  v_3  \dots v_{i+1})  (v_2  \dots v_j)(v_1 \dots v_{j-1}).
\end{align*}
Comparing the two results, we obtain the desired equality: $\tau_j \sigma_i = \sigma_i \tau_j$.
 \end{proof}
%%%%%%%%%%%%%%%%%%%%%%%%%%%%%%%%%%%%%%%%%%%

%Lemma 4 
\begin{lemma}
 The braid relations $\sigma_i \sigma_j \sigma_i = \sigma_j \sigma_i \sigma_i$ hold for all $|i-j|=1$.
 \end{lemma}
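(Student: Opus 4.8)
The plan is to reduce the general braid relation $\sigma_i\sigma_j\sigma_i=\sigma_j\sigma_i\sigma_j$ (for $|i-j|=1$) to its base case \eqref{A24}, which by the remark preceding the theorem is exactly $\sigma_1\sigma_2\sigma_1=\sigma_2\sigma_1\sigma_2$. Since the relation is symmetric in $i$ and $j$, it suffices to treat consecutive indices, i.e. to prove $\sigma_i\sigma_{i+1}\sigma_i=\sigma_{i+1}\sigma_i\sigma_{i+1}$ for $1\le i\le n-2$; the case $i=1$ is \eqref{A24} itself and serves as the anchor.

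First I would record a \emph{shift identity}, $\sigma_{i+1}=v_iv_{i+1}\,\sigma_i\,v_{i+1}v_i$, obtained purely from the defining relation \eqref{A14}. Writing $\sigma_{k+1}=A_k\sigma_1A_k^{-1}$ with $A_k=(v_k\cdots v_1)(v_{k+1}\cdots v_2)$ and $A_k^{-1}$ its reverse word, the telescoping computation $A_iA_{i-1}^{-1}=v_iv_{i+1}$ — carried out using only the virtual relations \eqref{A18}, \eqref{A19}, \eqref{A21} — gives $A_i=v_iv_{i+1}A_{i-1}$ and hence the identity. This lets me express both $\sigma_i$ and $\sigma_{i+1}$ through the \emph{single} conjugator $A_{i-1}$ together with the short word $v_iv_{i+1}$, which is the feature that makes the eventual collapse onto \eqref{A24} possible.

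Then I would substitute \eqref{A14} into each product $\sigma_i\sigma_{i+1}\sigma_i$ and $\sigma_{i+1}\sigma_i\sigma_{i+1}$ and simplify the two sides \emph{independently}, in the style of Lemma~\ref{sig-tau}: commute the purely virtual blocks past one another with \eqref{A19}, collapse the telescoping subwords with the preparatory identity \eqref{A16}, and cancel opposite pairs with \eqref{A21}, while leaving the factors $\sigma_1$ and the non-commuting pairs $v_k\sigma_k$ untouched. The aim of this bookkeeping is to bring each side into the common shape $P\,[\,\text{base word}\,]\,Q$, where $P$ and $Q$ are identical virtual words on the two sides and the bracketed part is, respectively, the left- and right-hand side of \eqref{A24} — or of the form of \eqref{A24} multiplied on both sides by a fixed virtual word, the same device used to pass from \eqref{A28} to its conjugated consequence in the proof of Lemma~\ref{sig-tau}. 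Once both computations terminate inside this common outer frame $P\,[\,\cdot\,]\,Q$, a single application of \eqref{A24} equates them, which is the desired relation for the pair $(i,i+1)$; the reversed pair is the same equation read backwards.

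The hard part will be the virtual-word bookkeeping. Because $v_k$ and $\sigma_k$ do \emph{not} commute — that is one of the forbidden moves — the telescoping shortcuts of Lemma~\ref{sig-tau} cannot be applied blindly, and one must verify that the residual virtual words flanking the three $\sigma_1$'s produced on the two sides genuinely coincide, so that \eqref{A24} (in its bare or multiplied form) is applicable. Concretely, the work lies in confirming that the flanking data generated by \eqref{A14} on each side is exactly what one gets by conjugating \eqref{A24} by the appropriate virtual block $A_{i-1}$ and its partner $v_iv_{i+1}A_{i-1}$; once this matching is established the remaining manipulations are the routine commutations and cancellations already exhibited in the preceding lemmas.
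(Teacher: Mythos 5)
Your plan is essentially the paper's own proof: the paper likewise expands the $\sigma$'s via \eqref{A14}, uses \eqref{A16}, \eqref{A19}, \eqref{A21} and the previously established commutation lemmas to bring both $\sigma_i\sigma_{i+1}\sigma_i$ and $\sigma_{i+1}\sigma_i\sigma_{i+1}$ into a common frame $P\,[\,\cdot\,]\,Q$ with $P=(v_{i-1}\cdots v_1)(v_i\cdots v_2)(v_{i+1}\cdots v_3)$ and $Q$ its reverse, where the bracketed words are exactly the left- and right-hand sides of \eqref{A24}, and then concludes with a single application of \eqref{A24}. Your shift identity $\sigma_{i+1}=v_iv_{i+1}\,\sigma_i\,v_{i+1}v_i$ is a correct repackaging of \eqref{A14} using only the virtual relations, and the frame-matching you defer as "bookkeeping" is precisely the computation the paper carries out in full.
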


\begin{proof} We will show that the relation holds for $j = i+1$ and $i \geq $2 (recall that the base case relation corresponds to $i = 1$ and $j = 2$, which is represented by the relation~\eqref{A24}). 

Starting with the left hand side of the desired identity and using the relations \eqref{A14}, \eqref{A21}, and \eqref{A16}, we obtain (see the beginning of the proof for Lemma 2 in ~\cite{KL1}):
\begin{align*}
\sigma_i  \sigma_{i+1} \sigma_i = & (v_{i-1}\ldots v_1)(v_{i}\ldots v_2) ( v_{i+1} \ldots v_3) ( \sigma_1   v_1 v_2  \sigma_1 v_2 v_1  \sigma_1)( v_3  \ldots  v_{i+1} ) \cdot  
\\&  (v_2\ldots v_{i})(v_1\ldots v_{i-1}).
\end{align*}

For the right hand side of the identity, we have:
\begin{align*}
\sigma_{i+1} \underline{ \sigma_i} \sigma_{i+1} \stackrel{\eqref{A14}}{=}& \sigma_{i+1} (v_{i-1}\ldots v_1)  (v_i \ldots  v_2 \underline{(1_n)}) \sigma_1 \underline{(1_n)(1_n)}(v_2 \ldots v_{i}) (v_1\ldots v_{i-1})  \sigma_{i+1} \\
=\,\,\, & \underline{ \sigma_{i+1} } (v_{i-1}\ldots v_1)  (v_i \ldots v_2 v_1 v_1) \sigma_1 \underline{(v_{i+1}\ldots v_3)}   (v_3 \ldots  v_{i+1})  (\underline{v_1} v_1 )\cdot 
\\&( v_2\ldots v_{i}) (v_1\ldots v_{i-1})\underline{\sigma_{i+1}}.
\end{align*}
Making use of Lemmas~\ref{sigv-tauv} and ~\ref{sig-tau}, and the relations~\eqref{A19} and~\eqref{A23}, we arrive at:
\begin{align*}
\sigma_{i+1}  \sigma_i \sigma_{i+1}
=\,\,\,&  (v_{i-1}\ldots v_1) \sigma_{i+1}  (v_i \ldots v_{1})(v_{i+1}\ldots v_3)\underline{(1_n)}  v_1  \sigma_1 \,v_1 \underline{(1_n)} (v_3 \ldots  v_{i+1} ) \cdot 
\\& (v_1\ldots v_{i})\sigma_{i+1} (v_1\ldots v_{i-1}) \\
=\,\,\,&  (v_{i-1}\ldots v_1) \underline{\sigma_{i+1}} (v_i \ldots v_{1})(v_{i+1}\ldots v_3) ( v_2   v_2 ) v_1  \sigma_1 v_1 (v_2   v_2 )( v_3 \ldots  v_{i+1} ) \cdot 
\\& (v_1\ldots v_{i}) \underline{\sigma_{i+1}} (v_1\ldots v_{i-1}) \\
\stackrel{\eqref{A14}}{=}& (v_{i-1}\ldots v_1)  (v_{i}\ldots v_1) ( v_{i+1} \ldots v_2 ) \sigma_1 \underline{(v_2 \ldots v_{i+1})(v_1 \ldots v_{i})(v_i \ldots v_{1})(v_{i+1}\ldots v_{2}) } \cdot
\\& v_2 v_1  \sigma_1 v_1 v_2  \underline{( v_2 \dots  v_{i+1} ) (v_1\dots v_{i}) (v_{i}\dots v_1) ( v_{i+1} \dots v_2 )}\sigma_1 ( v_2 \dots  v_{i+1} ) (v_1\dots v_{i}) \cdot
\\&  (v_1\dots v_{i-1}) \\%
\stackrel{\eqref{A21}}{=}&  (v_{i-1}\ldots v_1)(v_{i}\ldots \underline{v_1}) ( v_{i+1} \ldots v_2 ) \sigma_1 v_2 v_1  \sigma_1 v_1 v_2 \sigma_1 ( v_2 \ldots  v_{i+1} )     (\underline{v_1}\ldots v_{i}) \cdot
\\& (v_1\ldots v_{i-1}) \\
\stackrel{\eqref{A19}}{=}&  (v_{i-1}\ldots v_1)(v_{i}\ldots v_2) ( v_{i+1} \ldots v_3)(\underline{v_1 v_2 \sigma_1  v_2 v_1 ) \sigma_1(v_1 v_2 \sigma_1 v_2  v_1 }  )( v_3 \ldots  v_{i+1} )    \cdot 
\\&   (v_2\ldots v_{i}) (v_1\ldots v_{i-1}) \\
\stackrel{\eqref{A24}}{=}&  (v_{i-1}\ldots v_1)(v_{i}\ldots v_2) ( v_{i+1} \ldots v_3) ( \sigma_1   v_1 v_2  \sigma_1 v_2 v_1  \sigma_1  )( v_3  \ldots  v_{i+1} ) (v_2\ldots v_{i}) \cdot 
\\& (v_1\dots v_{i-1}). 
\end{align*}
Therefore, the relation holds for all $i>1$, which completes the proof.
\end{proof}
For a somewhat different proof of the previous lemma (as it applies to the virtual braid group), we refer the reader to~\cite[Lemma 2]{KL1}. 

%Lemma 5
\begin{lemma}
The braid relations $\sigma_j  \sigma_{i} \tau_j = \tau_{i} \sigma_j \sigma_{i}$ hold for all $|i-j|=1$.
\end{lemma}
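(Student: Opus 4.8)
The plan is to prove the relation in the form $\sigma_{i+1}\sigma_i\tau_{i+1}=\tau_i\sigma_{i+1}\sigma_i$ for $j=i+1$ and $i\geq 2$, exactly paralleling the preceding lemma (the proof of $\sigma_i\sigma_{i+1}\sigma_i=\sigma_{i+1}\sigma_i\sigma_{i+1}$). The base case $i=1$ is nothing but the braid form of the kept relation~\eqref{A24b}, since by~\eqref{A14} and~\eqref{A15} we have $\sigma_2=v_1v_2\sigma_1v_2v_1$ and $\tau_2=v_1v_2\tau_1v_2v_1$, so that~\eqref{A24b} reads $\tau_1\sigma_2\sigma_1=\sigma_2\sigma_1\tau_2$, i.e. $\sigma_2\sigma_1\tau_2=\tau_1\sigma_2\sigma_1$. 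The remaining orderings $j=i-1$ follow from the same computation after relabeling the indices, so it suffices to treat $j=i+1$.

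First I would rewrite both $\sigma_{i+1}\sigma_i\tau_{i+1}$ and $\tau_i\sigma_{i+1}\sigma_i$ by substituting the defining relations~\eqref{A14} and~\eqref{A15}, expressing each of $\sigma_{i+1},\tau_{i+1}$ and $\sigma_i,\tau_i$ as $\sigma_1$ or $\tau_1$ conjugated by the appropriate products of the $v_k$'s. This is the same opening move as in the previous lemma. I would then collapse the long $v$-words using the virtual relations~\eqref{A18},~\eqref{A19},~\eqref{A21} and the preparatory identity~\eqref{A16}, together with the commuting relations already established in Lemmas~\ref{sigv-tauv} and~\ref{sig-tau} (these let me slide the conjugating strands past $\sigma_1$ and $\tau_1$). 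The aim of this simplification is to bring both sides into the common conjugated form
\[
(v_{i-1}\ldots v_1)(v_i\ldots v_2)(v_{i+1}\ldots v_3)\,X\,(v_3\ldots v_{i+1})(v_2\ldots v_i)(v_1\ldots v_{i-1}),
\]
where $X$ is a central word in $\sigma_1,\tau_1,v_1,v_2$: the left side producing the central word associated to $\sigma_2\sigma_1\tau_2$ and the right side that associated to $\tau_1\sigma_2\sigma_1$.

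Once both sides have been reduced to this shape with identical outer $v$-words, the equality of the two choices of $X$ is precisely relation~\eqref{A24b}, so a single application of~\eqref{A24b} to the central block identifies the two sides and completes the case $j=i+1$. The $j=i-1$ family is then obtained by the analogous computation with the roles of the indices interchanged.

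The main obstacle will be the bookkeeping of the outer $v$-words: one must verify that, after the substitutions and the use of~\eqref{A16} and the commuting lemmas, the conjugating products on the two sides cancel symmetrically so that exactly the same word brackets the central block. A particular point of care is that $\tau_1$ (hence $\tau_i$) is \emph{not} invertible in ${VSB}_n$, so the reduction must proceed only through the commuting relations of Lemmas~\ref{sigv-tauv} and~\ref{sig-tau} and the virtual relations~\eqref{A18}--\eqref{A21}, never by canceling a $\tau$ against an inverse; one must also keep straight the interaction between the real crossing $\sigma_i$ and the singular crossing $\tau_{i+1}$ (respectively $\tau_i$) throughout. As in the preceding lemma, once the outer words are shown to match, the final step reducing to~\eqref{A24b} is immediate.
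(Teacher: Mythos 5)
Your proposal follows essentially the same route as the paper's own proof: expand $\sigma_{i+1}\sigma_i\tau_{i+1}$ and $\tau_i\sigma_{i+1}\sigma_i$ via the defining relations~\eqref{A14} and~\eqref{A15}, collapse the virtual words using~\eqref{A16},~\eqref{A19},~\eqref{A21},~\eqref{A23} and the commuting lemmas until both sides take the common conjugated form $(v_{i-1}\ldots v_1)(v_i\ldots v_2)(v_{i+1}\ldots v_3)\,X\,(v_3\ldots v_{i+1})(v_2\ldots v_i)(v_1\ldots v_{i-1})$, and then identify the two central blocks by a single application of~\eqref{A24b}. The paper does exactly this, arriving at central blocks $\tau_1 v_1v_2\sigma_1 v_2v_1\sigma_1$ and $(v_1v_2\sigma_1 v_2v_1)\sigma_1(v_1v_2\tau_1 v_2v_1)$, which~\eqref{A24b} equates, and likewise defers the $j=i-1$ case to an analogous computation.
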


\begin{proof} We will show that the relation holds for the case $j = i+1$ and $i \geq $2 (the case $i = 1$ and $j = 2$ is the base case relation represented by the relation~\eqref{A24b}). The proof for the other case, namely when $j = i-1$ and $i\geq 3$ follows similarly. For the right hand side of the identity, we have:
\begin{align*}
\tau_i  \sigma_{i+1} \sigma_i  \displaystyle  \mathop{=}_{\eqref{A15}}^{\eqref{A14}}&  [ (v_{i-1}\ldots v_2v_1)(v_{i}\ldots v_3v_2)\tau_1  (v_2v_3\ldots v_{i}) \underline{(v_1v_2\ldots v_{i-1}) }] \cdot\\
& [\underline{ (v_i\ldots v_2v_1) }(v_{i+1}\ldots v_3v_2)\sigma_1 (v_2v_3\ldots v_{i+1}) \underline{ (v_1v_2\ldots v_i) }]\cdot\\
&  [ \underline{(v_{i-1}\ldots v_2v_1) } (v_{i}\ldots v_3v_2)\sigma_1  (v_2v_3\ldots v_{i})(v_1v_2\ldots v_{i-1}) ]\\
\stackrel{\eqref{A16}}{=}&  (v_{i-1}\ldots v_2v_1)(v_{i}\ldots v_3v_2)\tau_1  (\underline{v_2v_3\ldots v_{i}) (v_i \ldots v_2 } v_1 v_2\ldots v_i) (v_{i+1}\ldots v_3v_2)\sigma_1 \cdot \\ 
&(v_2v_3\ldots v_{i+1})  (v_i \ldots  v_2 v_1 \underline{ v_2\ldots v_i) (v_{i}\ldots v_3v_2)} \sigma_1  (v_2v_3\ldots v_{i})(v_1v_2\ldots v_{i-1}) \\
\stackrel{\eqref{A21}}{=}& (v_{i-1}\ldots v_2v_1)(v_{i}\ldots v_3v_2)(\tau_1   v_1 \underline{ v_2\ldots v_i) (v_{i+1}\ldots v_3v_2) } \sigma_1  \cdot \\ 
&  \underline{ (v_2v_3\ldots v_{i+1}) (v_i \ldots  v_2 } v_1  \sigma_1 )(v_2v_3\ldots v_{i})(v_1v_2\ldots v_{i-1}) \\
\stackrel{\eqref{A16}}{=}&  (v_{i-1}\ldots v_2v_1)(v_{i}\ldots v_3v_2)(\underline {\tau_1   v_1}  v_{i+1} \ldots v_3 v_2 v_3\ldots v_{i+1}) \underline{ \sigma_1}  \cdot \\ 
&(v_{i+1} \ldots v_3 v_2 v_3  \ldots  v_{i+1}  \underline{ v_1  \sigma_1 } )  (v_2v_3\ldots v_{i})(v_1v_2\ldots v_{i-1}) \\
\displaystyle  \mathop{=}_{\eqref{A19}}^{\eqref{A23}}  &   (v_{i-1}\ldots v_2v_1)(v_{i}\ldots v_3v_2) (v_{i+1} \ldots v_3) (\tau_1   v_1 v_2  \underline{v_3\ldots v_{i+1})}\cdot \\
  & \underline{(v_{i+1} \ldots v_3 } \sigma_1 v_2 v_1  \sigma_1  v_3 \ldots  v_{i+1}) (v_2v_3\ldots v_{i})(v_1v_2\ldots v_{i-1})  \\
\stackrel{\eqref{A21}}{=}&  (v_{i-1}\ldots v_1)(v_{i}\ldots v_2) ( v_{i+1} \ldots v_3) ( \tau_1   v_1 v_2  \sigma_1 v_2 v_1  \sigma_1) \cdot \\
&( v_3  \ldots  v_{i+1} )     (v_2\ldots v_{i})(v_1\ldots v_{i-1}).
\end{align*}
Now we will consider the left hand side of the identity.
\begin{align*}
\sigma_{i+1} \underline{ \sigma_i} \tau_{i+1} \stackrel{\eqref{A14}}{=}& \sigma_{i+1} (v_{i-1}\ldots v_1)  (v_i \ldots  v_2 \underline{(1_n)}) \sigma_1 \underline{(1_n)(1_n)}(v_2 \ldots v_{i}) (v_1\ldots v_{i-1})  \tau_{i+1} \\
= \,\,\,& \underline{ \sigma_{i+1} } (v_{i-1}\ldots v_1)  (v_i \ldots v_2 v_1 v_1) \sigma_1 \underline{(v_{i+1}\ldots v_3)}   (v_3 \ldots  v_{i+1} )  (\underline{v_1} v_1 )\cdot \\
&( v_2\ldots v_{i}) (v_1\ldots v_{i-1}) \underline{\tau_{i+1}}. 
\end{align*}
Employing the commuting relations in Lemma~\ref{sigv-tauv} and those in Equations~\eqref{A19} and~\eqref{A23}, we obtain:
\begin{align*}
\sigma_{i+1}  \sigma_i \tau_{i+1} = \,\,\,&  (v_{i-1}\ldots v_1) \sigma_{i+1}  (v_i \ldots v_{1})(v_{i+1}\ldots v_3)\underline{(1_n)}  v_1  \sigma_1 \,v_1 \underline{(1_n)} (v_3 \ldots  v_{i+1} ) \cdot \\
& (v_1\ldots v_{i}) \tau_{i+1} (v_1\ldots v_{i-1}) \\
= \,\,\, &  (v_{i-1}\ldots v_1) \underline{\sigma_{i+1}} (v_i \ldots v_{1})(v_{i+1}\ldots v_3) ( v_2   v_2 ) v_1  \sigma_1 v_1 (v_2   v_2 )( v_3 \ldots  v_{i+1} )  \cdot \\
&(v_1\ldots v_{i}) \underline{\tau_{i+1}}(v_1\ldots v_{i-1})\\
\displaystyle  \mathop{=}_{\eqref{A15}}^{\eqref{A14}}  &  (v_{i-1}\ldots v_1)  (v_{i}\ldots v_1) ( v_{i+1} \ldots v_2 ) \sigma_1 \underline{(v_2 \ldots v_{i+1})(v_1 \ldots v_{i})} \cdot \\
&\underline{(v_i \ldots v_{1})(v_{i+1}\ldots v_{2}) } 
v_2 v_1  \sigma_1 v_1 v_2  \underline{( v_2 \ldots  v_{i+1} ) }\cdot \\
& \underline{(v_1\ldots v_{i}) (v_{i}\ldots v_1) ( v_{i+1} \ldots v_2 )}\tau_1 ( v_2 \ldots  v_{i+1} ) (v_1\ldots v_{i}) (v_1\ldots v_{i-1}) \\
 \stackrel{\eqref{A21}}{=}& (v_{i-1}\ldots v_1)(v_{i}\ldots \underline{v_1}) ( v_{i+1} \ldots v_2 ) \sigma_1 v_2 v_1  \sigma_1 v_1 v_2 \tau_1 ( v_2 v_3 
 \ldots  v_{i+1} ) \cdot \\
 & (\underline{v_1}\ldots v_{i})(v_1\ldots v_{i-1}) \\%
 \stackrel{\eqref{A19}}{=}&  (v_{i-1}\ldots v_1)(v_{i}\ldots v_2) ( v_{i+1} \ldots v_3)(\underline{v_1 v_2 \sigma_1  v_2 v_1 ) \sigma_1(v_1 v_2 \tau_1 v_2  v_1 }  )( v_3 \ldots  v_{i+1} )   \cdot \\
&  (v_2\ldots v_{i})(v_1\ldots v_{i-1}) \\
\stackrel{\eqref{A24b}}{=}&  (v_{i-1}\ldots v_1)(v_{i}\ldots v_2) ( v_{i+1} \ldots v_3) ( \tau_1   v_1 v_2  \sigma_1 v_2 v_1  \sigma_1) \cdot \\
&( v_3 \ldots  v_{i+1} )   (v_2\ldots v_{i})(v_1\ldots v_{i-1}).
\end{align*}
Therefore, $\sigma_{i+1} \sigma_i \tau_{i+1}  = \tau_i  \sigma_{i+1} \sigma_i$ for all $i \geq $2. 
\end{proof}

%%% Lemma 6
\begin{lemma}
 The braid relations $\sigma_i  \sigma_i^{-1} = 1_n$ hold for all $1 \leq i \leq n-1$. 
\end{lemma}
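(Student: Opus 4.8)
The plan is to reduce the general identity $\sigma_i\sigma_i^{-1}=1_n$ to the base case $\sigma_1\sigma_1^{-1}=1_n$, which is part of relation~\eqref{A20a}, by exploiting the key structural feature of the defining relation~\eqref{A14}: it presents both $\sigma_{i+1}$ and $\sigma_{i+1}^{-1}$ as conjugates of $\sigma_1^{\pm1}$ by the \emph{same} word in the virtual generators. Concretely, I would write $W=(v_i\ldots v_2v_1)(v_{i+1}\ldots v_3v_2)$ and $W'=(v_2v_3\ldots v_{i+1})(v_1v_2\ldots v_i)$, so that~\eqref{A14} reads $\sigma_{i+1}=W\sigma_1 W'$ and $\sigma_{i+1}^{-1}=W\sigma_1^{-1}W'$, with identical flanking words.

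First I would verify that $W'=W^{-1}$ in $VSB_n$. Since each $v_j$ is an involution by~\eqref{A21}, the word $v_2v_3\ldots v_{i+1}$ is the inverse of $v_{i+1}\ldots v_3v_2$, and $v_1v_2\ldots v_i$ is the inverse of $v_i\ldots v_2v_1$; hence $W'=(v_{i+1}\ldots v_3v_2)^{-1}(v_i\ldots v_2v_1)^{-1}=W^{-1}$. In particular the cancellations $W'W=1_n$ and $WW'=1_n$ follow purely from repeated application of~\eqref{A21} (peeling off $v_2v_2$, then $v_3v_3$, and so on), with no reference to any relation involving $\sigma_1$ or $\tau_1$.

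With this in hand the computation is immediate: using~\eqref{A14} for both factors and underlining the portion simplified at each step,
\begin{align*}
\sigma_{i+1}\sigma_{i+1}^{-1}
&= W\sigma_1 \underline{W'W}\,\sigma_1^{-1}W'
\stackrel{\eqref{A21}}{=} W\,\underline{\sigma_1\sigma_1^{-1}}\,W'
\stackrel{\eqref{A20a}}{=} WW'
\stackrel{\eqref{A21}}{=} 1_n.
\end{align*}
Since $i$ is arbitrary with $1\le i\le n-2$, this handles $\sigma_2,\ldots,\sigma_{n-1}$; the remaining case $\sigma_1\sigma_1^{-1}=1_n$ is exactly~\eqref{A20a}, so altogether $\sigma_i\sigma_i^{-1}=1_n$ holds for all $1\le i\le n-1$.

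I do not anticipate a genuine obstacle here: the entire argument rests on the single observation that conjugation by $W$ is invertible with inverse conjugation by $W'=W^{-1}$, so that the $\pm$ exponents on $\sigma_1$ behave exactly as in the base relation. The only point requiring care is the bookkeeping establishing $W'=W^{-1}$, that is, confirming that the two virtual words flanking $\sigma_1^{\pm1}$ in~\eqref{A14} are genuinely mutually inverse; once that is checked, the result follows from~\eqref{A21} and~\eqref{A20a} alone.
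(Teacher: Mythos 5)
Your proof is correct: the two virtual words flanking $\sigma_1^{\pm1}$ in \eqref{A14} are mutually inverse by the involution relations \eqref{A21}, so the product collapses to $W\sigma_1\sigma_1^{-1}W'=WW'=1_n$ via the base case in \eqref{A20a}. The paper offers no argument at all here (its proof reads only ``It is easy to see that these relations hold''), and your computation is precisely the routine verification being alluded to, so the approaches coincide.
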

 \begin{proof} It is easy to see that these relations hold.
 \end{proof}

%%%% Lemma 7
\begin{lemma}
The braid relations $\tau_i  \sigma_i = \sigma_i \tau_i$ hold for all $1 \leq i \leq n-1$. 
\end{lemma}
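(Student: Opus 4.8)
The plan is to exploit the structural fact, built into the reduced presentation, that for $i\geq 2$ each generator $\sigma_i$ and each $\tau_i$ is a \emph{conjugate} of $\sigma_1$ and $\tau_1$, respectively, by one and the \emph{same} word in the $v_j$. This collapses the whole family of relations onto the single base relation $\sigma_1\tau_1=\tau_1\sigma_1$ recorded in~\eqref{A20a}.

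First I would dispose of the case $i=1$, which is literally the relation $\sigma_1\tau_1=\tau_1\sigma_1$ in~\eqref{A20a}. For $i\geq 2$ I introduce the abbreviation
\[
W_i := (v_{i-1}\ldots v_1)(v_i\ldots v_2),
\]
so that the defining relations~\eqref{A14} and~\eqref{A15} (read with the index $i+1$ replaced by $i$) become
\[
\sigma_i = W_i\,\sigma_1\,(v_2\ldots v_i)(v_1\ldots v_{i-1}), \qquad \tau_i = W_i\,\tau_1\,(v_2\ldots v_i)(v_1\ldots v_{i-1}).
\]
The observation I would stress is that the right-hand word $(v_2\ldots v_i)(v_1\ldots v_{i-1})$ is exactly the reversal of $W_i$, and hence equals $W_i^{-1}$: since each $v_j$ is an involution by~\eqref{A21}, the reverse of any word in the $v_j$ is its inverse. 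Thus $\sigma_i = W_i\sigma_1 W_i^{-1}$ and $\tau_i = W_i\tau_1 W_i^{-1}$ for all $i\geq 2$.

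With this in hand the computation is a one-line cancellation:
\[
\tau_i\sigma_i = W_i\tau_1\,(W_i^{-1}W_i)\,\sigma_1 W_i^{-1} = W_i\,\tau_1\sigma_1\,W_i^{-1} = W_i\,\sigma_1\tau_1\,W_i^{-1} = W_i\sigma_1\,(W_i^{-1}W_i)\,\tau_1 W_i^{-1} = \sigma_i\tau_i,
\]
where the interior cancellations $W_i^{-1}W_i = 1_n$ use~\eqref{A21} and the central replacement uses the base case~\eqref{A20a}. The only point that genuinely needs checking — and the nearest thing to an obstacle — is the claim that the left and right conjugating words in~\eqref{A14}/\eqref{A15} are mutually inverse; this is precisely the formal statement that reversing a word of involutions inverts it, which follows termwise from~\eqref{A21}. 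Once that is verified there is no further computation to perform, since the whole lemma is just conjugation-invariance of the base relation.
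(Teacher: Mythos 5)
Your proof is correct and is essentially the paper's own argument: the paper likewise expands $\tau_i\sigma_i$ via the defining relations \eqref{A14} and \eqref{A15}, cancels the middle word $(v_2\ldots v_i)(v_1\ldots v_{i-1})(v_{i-1}\ldots v_1)(v_i\ldots v_2)=1_n$ using \eqref{A21}, and then invokes $\tau_1\sigma_1=\sigma_1\tau_1$ from \eqref{A20a}. Your observation that the conjugating words are mutually inverse (so that $\sigma_i=W_i\sigma_1W_i^{-1}$ and $\tau_i=W_i\tau_1W_i^{-1}$) is just a cleaner packaging of that same cancellation.
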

 \begin{proof} Let $i >1$. We first use the defining relations~\eqref{A14} and~\eqref{A15} followed by the virtual relations~\eqref{A21} to obtain:
\begin{align*}
\tau_i \sigma_i  =& [ (v_{i-1}\ldots v_2v_1)(v_{i}\ldots v_3v_2)\tau_1  (v_2v_3\ldots v_{i}) \underline{(v_1v_2\ldots v_{i-1}) ]\,  [ (v_{i-1}\ldots v_2v_1) }\cdot \\ 
& (v_{i}\ldots v_3v_2)\sigma_1 (v_2v_3\ldots v_{i})(v_1v_2\ldots v_{i-1}) ]\\
=& (v_{i-1}\ldots v_2v_1)(v_{i}\ldots v_3v_2)\tau_1  \underline{(v_2v_3\ldots v_{i})  (v_{i}\ldots v_3v_2) } \sigma_1   (v_2v_3\ldots v_{i})(v_1v_2\ldots v_{i-1}) \\
=&  (v_{i-1}\ldots v_2v_1)(v_{i}\ldots v_3v_2)\tau_1  \sigma_1  (v_2v_3\ldots v_{i})(v_1v_2\ldots v_{i-1}). 
\end{align*}
Using similar computations, we arrive at:
\begin{align*}
 \sigma_i \tau_i=(v_{i-1}\ldots v_2v_1)(v_{i}\ldots v_3v_2)\sigma_1  \tau_1(v_2v_3\ldots v_{i})(v_1v_2\ldots v_{i-1}). 
\end{align*}
But since $\tau_1 \sigma_1 = \sigma_1 \tau_1$, the statement follows.
\end{proof}

%%%% Lemma 8
\begin{lemma}
The braid relations $v_i\sigma_j v_i=v_j\sigma_iv_j$ and $v_i\tau_jv_i=v_j\tau_iv_j$ hold for all $|i-j|=1$.
\end{lemma}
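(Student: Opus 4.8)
The plan is to derive both relations directly from the defining relations \eqref{A14} and \eqref{A15}, reducing everything to a single conjugation identity among the purely virtual generators. Since the two asserted relations have exactly the same shape, with $\tau_1$ playing the role of $\sigma_1$, I will carry out the argument for $v_i\sigma_j v_i = v_j \sigma_i v_j$ and observe that the $\tau$-case is verbatim the same after replacing $\sigma_1$ by $\tau_1$. Moreover, the relation $v_i\sigma_j v_i=v_j\sigma_iv_j$ is symmetric under interchanging $i$ and $j$, so it suffices to treat the case $j=i+1$ with $1\le i\le n-2$.

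To streamline the computation I would introduce the shorthand $\delta_k := (v_{k-1}\ldots v_1)(v_k\ldots v_2)$, so that \eqref{A14} and \eqref{A15} read simply $\sigma_k=\delta_k\sigma_1\delta_k^{-1}$ and $\tau_k=\delta_k\tau_1\delta_k^{-1}$ for $k\ge 2$. Here $\delta_k^{-1}=(v_2\ldots v_k)(v_1\ldots v_{k-1})$ is exactly the right-hand factor appearing in those relations, and $\delta_1=1_n$, so the formulas also hold for $k=1$ (where $\sigma_1,\tau_1$ are generators).

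The crux of the proof is the single identity $v_i\,\delta_{i+1}=v_{i+1}\,\delta_i$, which uses only the virtual relations \eqref{A19} and \eqref{A21}. On the left, cancelling the leading $v_iv_i$ via \eqref{A21} gives $(v_{i-1}\ldots v_1)(v_{i+1}v_i\ldots v_2)$; on the right, commuting $v_{i+1}$ leftward past each of $v_{i-1},\ldots,v_1$ via \eqref{A19} produces the very same word. Taking inverses and using that each $v_k$ is an involution then yields $\delta_{i+1}^{-1}v_i=\delta_i^{-1}v_{i+1}$.

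Substituting $\sigma_{i+1}=\delta_{i+1}\sigma_1\delta_{i+1}^{-1}$ and regrouping then gives $v_i\sigma_{i+1}v_i=(v_i\delta_{i+1})\sigma_1(\delta_{i+1}^{-1}v_i)=(v_{i+1}\delta_i)\sigma_1(\delta_i^{-1}v_{i+1})=v_{i+1}\sigma_iv_{i+1}$, which is the desired relation; the $\tau$-version follows identically with $\tau_1$ in place of $\sigma_1$. I do not expect a genuine obstacle here: the only point requiring care is the bookkeeping of the descending and ascending products $\delta_k$ and $\delta_k^{-1}$, together with the observation that $v_{i+1}$ commutes with $v_1,\ldots,v_{i-1}$ but not with $v_i$, so that this single non-commuting interaction is precisely what makes the two conjugators coincide once the appropriate virtual generator is prepended.
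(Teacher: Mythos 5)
Your proposal is correct and takes essentially the same route as the paper's proof: both substitute the defining relations \eqref{A14}--\eqref{A15}, cancel the adjacent $v_iv_i$ via \eqref{A21}, and slide $v_{i+1}$ past the lower-index virtual generators via \eqref{A19}; your $\delta_k$ notation and the identity $v_i\delta_{i+1}=v_{i+1}\delta_i$ merely package the same two steps more compactly. The only cosmetic difference is that you work out the $\sigma$-case and cite the $\tau$-case as identical, while the paper does the reverse.
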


\begin{proof}
It should be clear that these relations hold, since they were used in the defining relations~\eqref{A14} and~\eqref{A15}. However, we provide a proof for the second set of relations for $j = i+1$ and $i \geq 1$ (the first set of relations follow similarly).
\begin{align*}
v_i\tau_{i+1}v_i \stackrel{\eqref{A15}}{=}& \underline{v_i  (v_i} \ldots v_2v_1)(v_{i+1}\ldots v_3v_2)\tau_1 (v_2v_3\ldots v_{i+1})(v_1v_2\ldots \underline{v_i) v_i}\\
\stackrel{\eqref{A21}}{=}&  (v_{i-1}\ldots v_2v_1)(\underline{v_{i+1}}\ldots v_3v_2)\tau_1 (v_2v_3\ldots \underline{v_{i+1}})(v_1v_2\ldots v_{i-1}) \\
\stackrel{\eqref{A19}}{=}& v_{i+1}  (v_{i-1}\ldots v_2v_1)(v_{i}\ldots v_3v_2)\tau_1 (v_2v_3\ldots v_{i})(v_1v_2\ldots v_{i-1}) v_{i+1}\\
\stackrel{\eqref{A15}}{=}& v_{i+1} \tau_i v_{i+1}.
\end{align*}
This completes the proof.
\end{proof}

 \textit{Concluding remarks.} Virtual singular braids have a monoid structure that can be described by generators and relations. Specifically, in this paper, we introduced the virtual singular braid monoid as the algebraic counterpart of the diagrammatic theory of virtual singular knots and links. The virtual singular braid monoid is an extension of the singular braid monoid by the symmetric group. We have proved an Alexander-type theorem for virtual singular knots and links by providing a braiding algorithm that converts any oriented virtual singular knot or link to a virtual singular braid. We also provided two Markov-type theorems for virtual singular links and braids: (1) using an approach involving $L$-type moves and (2) the classical algebraic approach.
 
 The braiding algorithm described in this paper employs the $L$-moves for oriented virtual links introduced by Kauffman and Lambropoulou in~\cite{KL2}, which in turn are adaptations of prior work of Lambropoulou~\cite{L} on the case of oriented classical links. In particular, in this paper we introduced the singular $L_v$-equivalence for virtual singular braids as an extension of the $L$-equivalence for virtual braids introduced in~\cite{KL2}, to include $L$-type moves involving singular crossings. We first used singular $L_v$-equivalence to prove an $L$-move Markov-type theorem for virtual singular braids, and then turned this result into an algebraic Markov-type theorem for virtual singular braids of any number of strands. Finally, we derived a reduced presentation for the virtual singular braid monoid using fewer generators. The reduced presentation is based on the fact that the virtual singular braid monoid on $n$ strands is generated by three braiding elements plus the generators of the symmetric group on $n$ letters.
\\

\noindent \textbf{Acknowledgements.}  We gratefully acknowledge support from the NSF Grant DMS--1156273 through the \textit{Faculty-Undergraduate Research Student Teams} (FURST) Program. We would also like to thank the referee for reading the paper carefully and providing valuable comments and suggestions.


\begin{thebibliography}{999} 
\bibitem{A} J.W. Alexander, A lemma on systems of knotted curves, \textit{Proc. Nat. Acad. Sci. USA} \textbf{9} (1923), 93-95.  

\bibitem {Be} D. Bennequin, Entrlacements et \'{e}quations de Pfaffe, \textit{Asterisque} \textbf{107-108} (1983), 87-161.

\bibitem {B1} J.S. Birman, Braids, links and mapping class groups, \textbf{Ann. of Math. Stud.} \textbf{82}, Princeton University Press, Princeton, 1974.

\bibitem {B2} J. Birman, New Points of View in Knot Theory, \textit{Bull. Amer. Math. Soc.} (New Series) \textbf{28}, No. 2 (1993), 253-287.

\bibitem {G} B. Gemein, Singular braids and Markov's theorem, \textit{J. Knot Theory and Ramifications} \textbf{6}, No. 4 (1997), 441-454.                                                                                            

\bibitem {Ka} S. Kamada, Braid presentation of virtual knots and welded knots, \textit{Osaka J. Math.} \textbf{44} (2007), 441-458.

\bibitem {K1} L.H. Kauffman, Virtual knot theory, \textit{European. J. Combin.} \textbf{20} (1999), 663-691.

\bibitem {KL1} L.H. Kauffman, S. Lambropoulou, Virtual braids, \textit{Fund. Math.} \textbf{184} (2004), 159-186.

\bibitem {KL2} L.H. Kauffman, S. Lambropoulou, Virtual braids and the L-move,  \textit{J. Knot Theory and Ramifications} \textbf{15}, No. 6 (2006), 773-811.

\bibitem {L} S. Lambropoulou, A study of braids in 3-manifolds, Ph.D. thesis, Warwick Univ. (1993).

\bibitem {LR} S. Lambropoulou, C.P. Rourke, Markov's theorem in 3-manifolds, \textit{Topol. Appl.} \textbf{78} (1997), 95-122.

\bibitem {L2} S. Lambropoulou, L-moves and Markov theorems,  \textit{J. Knot Theory and Ramifications} \textbf{16}, No. 10 (2007), 1-10.

\bibitem {M} A.A. Markov, \"{U}ber die freie \"{A}quivalenz geschlossener Z\"{o}pfe, \textit{Recueil Math\'{e}matique Moscou} \textbf{1}, (1935).

\bibitem {Mo} H.R. Morton, Threading knot diagrams, \textit{Math. Proc. Cambridge Philos. Soc.} \textbf{99} (1986), No. 2, 247-260.

\bibitem {T} P. Traczyk, A new proof of Markov's braid theorem, \textit{Knot theory} (Warsaw, 1995), 409-419, Banach Center Publ., 42, Polish Acad. Sci., Warsaw, 1998.

\bibitem {W} N. Weinberg, Sur l'equivalence libre des tresses ferm\'{e}es, \textit{Comptes Rendus (Doklady) de l'Academie des Sciences de l'URSS} \textbf{23} (1939), 215-216.

\end{thebibliography}
\end{document}